\DeclareMathOperator{\D}{\mathrm{\Delta}}
\newtheorem{theorem}{Theorem}[section]
\newtheorem{proposition}[theorem]{Proposition}
\newtheorem{corollary}[theorem]{Corollary}
\newtheorem{definition}[theorem]{Definition}
\theoremstyle{remark}
\newtheorem{remark}[theorem]{Remark}
\numberwithin{equation}{subsection}
\begin{document}

	\title{On integral $\mathrm{Ext^2}$ between certain Weyl modules of $\mathrm{GLn}$}
	\author{Maria Metzaki}
	\address{Department of Mathematics, University of Athens}
	\curraddr{}
	\email{mmetzaki@math.uoa.gr}
	\thanks{}
	\subjclass[2020]{20G05, 20C30, 05E10}
	\keywords{Weyl modules, general linear group, extension groups}
	\date{November 1, 2024}.
	\dedicatory{}
	
\begin{abstract} Consider partitions of the form $\lambda=(a,1^b)$ and $\mu=(a+1,b-1)$,\\ where $a+1>b-1$. In this paper, we determine the extension groups $\mathrm{Ext}_A^2(K_{\lambda}F,K_{\mu}F)$, where $F$ is a free $\mathbb{Z}-$module of finite rank $n$, $K_{\lambda}F$ and $K_{\mu}F$ are the Weyl modules of the general linear group $GL_n(\mathbb{Z})$ corresponding to $\lambda$ and $\mu$, respectively, $A=S_\mathbb{Z}(n,r)$ is the integral Schur algebra and $r=a+b$.
\end{abstract}
\maketitle
	
\section{introduction}This paper concerns polynomial representations of the integral general linear group $G=GL_n(\mathbb{Z})$. For a partition $\lambda$, let $K_{\lambda}F$ denote the Weyl module of $G$ of highest weight $\lambda$. One of the important problems in the theory is the determination of the extension groups $\mathrm{Ext}_A^i(K_{\lambda}F,K_{\mu}F)$. For example, the dimensions of the modular extensions may be obtained through torsion and restriction of integral extensions using the universal coefficient theorem. Jantzen's fundamental sum formula can be viewed and proved via integral extension groups \cite{AK}. 
	
There are not many cases where explicit computations of integral extension groups between Weyl modules have been carried out. It is known that these are finite abelian groups. In \cite{AB}, the $GL_2$ case was computed for $i=1$. In \cite{BF} and \cite{C}, the $GL_3$ case was computed, when $\lambda$ and $\mu$ differ by a multiple of a single root, both for $i=1$. In \cite{Ak}, the situation $\lambda=(1^a), \mu=(a)$ was studied. In \cite{Ku2}, the case where $\lambda$ and $\mu$ are any partitions differing by a single root was settled. The situation where both $\lambda$ and $\mu$ are hooks was studied in \cite{MS} and \cite{S} for $i=1,d$ and $i=2$, respectively, where $d$ is the highest possible degree with $\mathrm{Ext}_A^d(K_{\lambda}F,K_{\mu}F) \neq 0$. In all of the above cases, the extension groups turn out to be cyclic.
	
Let $\lambda = (a,1^b)$ be a hook and let  $\mu=(a+1,b-1)$, where $a+1>b-1$. In this paper we determine explicitly $\mathrm{Ext}_A^2(K_{\lambda}F,K_{\mu}F)$. Our approach utilizes presentation matrices that we determine from the projective resolutions of $K_{\lambda}F$ found in \cite{Ma}. We compute the invariant factors of these matrices. The results show that the above extension groups are also cyclic.

\section{Notation and Recollections}In this section, we recall basic facts from the polynomial representation theory of $G=GL_n(\mathbb{Z})$ and we establish notation.
	
Let $F$ be a free $\mathbb{Z}-$module of finite rank $n$. We fix a basis of $F$, and we have an identification of general linear groups  $GL(F)=G$. 
   
Let $A=S_\mathbb{Z}(n,r)$ be the integral Schur algebra of degree $r$ corresponding to $G$ {\cite[section 2.3]{Gr}}. We will be working with homogeneous polynomial representations of $G$ of degree $r$, or equivalently, according to {\cite[section 2.4]{Gr}}, we will be working in the category of $A-$modules. 
    
By $\wedge(n,r)$ we denote the set of sequences $\alpha=(\alpha_1,\dots,\alpha_n)$
of non-negative integers with weight $\vert\alpha\vert=\alpha_1+\ldots+ \alpha_n=r$. By $\wedge^+(n,r)$ we donote the subset of $\wedge(n,r)$ consisting of sequences  $\lambda=(\lambda_1,\dots,\lambda_n)$ such that $\lambda_1\geq\lambda_2\geq\ldots\geq\lambda_n$.The elements of $\wedge^+(n,r)$ are referred to as partitions of $r$ with at most $n$ parts. A partition of the form $h=(a,1^b)$ is called a hook.
    
The divided power algebra  $DF = \oplus_{i\geq 0}D_iF$ of $F$ {\cite[section I.4]{ABW}}, is defined as the graded dual of the Hopf algebra  $S(F^*)$, where $F^*$ is the linear dual of $F$ and  $S(F^*)$ is the symmetric algebra of $F^*$. If $f\in F$ and $p,q$ are non-negative integers, we have the relation
\begin{equation*}
f^{(p)}f^{(q)}=\tbinom{p+q}{q}f^{(p+q)},
\end{equation*}
where $\tbinom{p+q}{q}$ is the indicated binomial coefficient. By  $\Delta$ (respectively, $m$)  we denote the indicated component of the diagonalization (respectively, multiplication) map of the Hopf algebra $DF$. For each   $\alpha=(\alpha_1, \ldots,\alpha_n)\in\wedge(n,r)$, the module $D_{\alpha}F=D_{\alpha_1}F\otimes \ldots \otimes D_{\alpha_n}F$  is projective over $A$ {\cite[Proposition 2.1]{AB}}. Throughout this paper, all tensor products are over the integers  $\mathbb{Z}$. Occasionally, for simplicity, we denote by $D(\alpha)$ or $D(\alpha_1,\dots,\alpha_n)$  the tensor product $D_{\alpha_1}F\otimes\ldots\otimes D_{\alpha_n}F$. 
    
Let $\lambda\in\wedge^+(n,r)$ and $d'_\lambda$ is the map in {\cite[Definition II.1.3]{ABW}}. By $K_{\lambda}F$  we denote the corresponding Weyl module for  $A$, defined as the image $\mathrm{Im}d'_\lambda$ of $d'_\lambda$.
    
Let $\{f_1,\ldots,f_n\}$ be an ordered basis for the free $\mathbb{Z}-$module $F$. For simplicity, we  denote $f_i$ by $i$. For a partition $\mu=(\mu_1,\mu_2)\in\wedge^+(n,r)$, a tableau of shape $\mu$ is a filling of the diagram of  $\mu$ with entries from the set $\{1,\ldots,n\}$. Such a tableau is called standard if the entries are weakly increasing across the rows from left to right and strictly increasing in the columns from top to bottom. The terminology used in {\cite[Definition II.3.2]{ABW}}, is co-standard. The content of a tableau $T$ is the tuple $\alpha=(\alpha_1,\ldots,\alpha_n)$, where $\alpha_i$ is the number of appearances of the entry $i$ in $T$.
    
To each tableau $T$ of shape $\mu=(\mu_1,\mu_2)\in\wedge^+(n,r)$, we associate an element 
\begin{equation*}
\mathrm{X}_T=\mathrm{X}_T(1)\otimes \mathrm{X}_T(2)\in D_{\mu_1}F\otimes D_{\mu_2}F=D_{\mu}F,
\end{equation*} 
where $\mathrm{X}_T(i)=1^{(a_{i1})}\dots n^{(a_{in})}$ and $a_{ij}$ is the number of appearances of $j$ in the $i-th$ row of $T$ {\cite[section II.2, p.224]{ABW}}. 

\begin{theorem}\label{thm21}{\cite[Theorem II.3.16]{ABW}}
Let $\mu=(\mu_1,\mu_2)\in\wedge^+(n,r)$. The set $\{d'_\mu(\mathrm{X}_T): T \text{ is a standard tableau of shape } \mu\}$ is a basis of the $\mathbb{Z}-$module $K_{\mu}F$.
\end{theorem}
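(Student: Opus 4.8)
The plan is to establish the two properties that together make $\{d'_\mu(X_S) : S \text{ standard}\}$ a $\mathbb{Z}$-basis of $K_\mu F$: that these elements span, and that they are linearly independent over $\mathbb{Z}$. First I would record the easy but crucial observation that as $T$ ranges over \emph{all} tableaux of shape $\mu=(\mu_1,\mu_2)$, the elements $X_T$ form a $\mathbb{Z}$-basis of $D_\mu F=D_{\mu_1}F\otimes D_{\mu_2}F$, since $X_T(1)$ ranges over the monomial divided-power basis $\{1^{(a_1)}\cdots n^{(a_n)}:\sum a_i=\mu_1\}$ of $D_{\mu_1}F$ and $X_T(2)$ over the analogous basis of $D_{\mu_2}F$. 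Because $K_\mu F=\operatorname{Im}d'_\mu$, the set $\{d'_\mu(X_T):T\text{ any tableau}\}$ already spans $K_\mu F$. Thus it suffices to prove (i) every $d'_\mu(X_T)$ is a $\mathbb{Z}$-linear combination of the $d'_\mu(X_S)$ with $S$ standard, and (ii) the standard family is $\mathbb{Z}$-linearly independent.

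For the spanning I would set up an integral straightening law. Recall that for the two-row shape the target of $d'_\mu$ is the exterior object $\Lambda^{\mu'}F$ indexed by the conjugate partition, namely $(\Lambda^2 F)^{\otimes\mu_2}\otimes F^{\otimes(\mu_1-\mu_2)}$, and that $d'_\mu$ is the composite of diagonalizations in $DF$ followed by the multiplications that place the two entries of each of the first $\mu_2$ columns into a factor $\Lambda^2 F$. A tableau fails to be standard exactly when some column $j\le\mu_2$ has top entry $\ge$ bottom entry. The antisymmetry $x\wedge y=-\,y\wedge x$ and $x\wedge x=0$ in the factors $\Lambda^2 F$ then yield quadratic Plücker-type relations among the $d'_\mu(X_T)$. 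After fixing the content and introducing a well-founded total order on tableaux of that content — say the reverse-lexicographic order on the second-row word — I would show that each such relation rewrites $d'_\mu(X_T)$, for a non-standard $T$, as a $\mathbb{Z}$-combination of $d'_\mu(X_{T'})$ with $T'$ strictly smaller. Since each content class is finite and the order is well-founded, iteration straightens every tableau, proving that the standard elements span.

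For independence I would use a leading-term (unitriangularity) argument. Compose $d'_\mu$ with the projection $\pi$ of $(\Lambda^2 F)^{\otimes\mu_2}\otimes F^{\otimes(\mu_1-\mu_2)}$ onto its $\mathbb{Z}$-basis of ordered wedge-monomials. For a standard $S$, the term obtained by sending, in each of the first $\mu_2$ columns, the top entry into the first and the bottom entry into the second slot produces an ordered basis element $e_S$, precisely because the columns of $S$ are strictly increasing; the remaining $\mu_1-\mu_2$ boxes contribute a weakly increasing word in $F^{\otimes(\mu_1-\mu_2)}$. The coefficient of $e_S$ in $\pi(d'_\mu(X_S))$ is a product of the coefficients arising from the diagonalizations; the key point is that this leading configuration only ever uses single-box splittings, and in the divided-power algebra the full diagonalization $D_m F\to F^{\otimes m}$ has coefficient $1$ on each distinct word (for instance $\Delta(f^{(2)})$ has component $f\otimes f$ with coefficient $1$), so the coefficient of $e_S$ is $+1$. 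Ordering standard tableaux compatibly with the map $S\mapsto e_S$, I would check that $\pi(d'_\mu(X_{S'}))$ can contribute to $e_S$ only when $S'$ precedes $S$, so that the transition matrix is unitriangular over $\mathbb{Z}$; this forces the $d'_\mu(X_S)$ to be linearly independent, and with the spanning step completes the proof.

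The main obstacle is the bookkeeping that makes both triangularities exact. In the spanning step one must verify that the straightening relations strictly decrease the chosen order (no cycles), and in the independence step that the off-diagonal contributions respect that order while the diagonal coefficient is genuinely a unit. The delicate interaction is between the binomial coefficients coming from $f^{(p)}f^{(q)}=\binom{p+q}{q}f^{(p+q)}$ and the antisymmetrization in the $\Lambda^2 F$ factors: non-unit binomials can appear in the lower-order terms and in the straightening coefficients, but the crux is that the leading term of a standard tableau uses only single-box splittings, whose coefficients are $1$, so the potentially obstructive binomials never occur on the diagonal.
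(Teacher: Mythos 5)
The paper offers no proof of Theorem \ref{thm21}: it is imported verbatim from \cite[Theorem II.3.16]{ABW}, so the only meaningful comparison is with the argument in that source. Your plan is, in substance, exactly that argument specialized to two-row shapes: an integral straightening law gives spanning, and a unitriangular leading-term expansion in the ordered wedge-monomial basis of $(\Lambda^2F)^{\otimes\mu_2}\otimes F^{\otimes(\mu_1-\mu_2)}$ gives $\mathbb{Z}$-linear independence.

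Your independence half is correct and essentially complete. The two facts it rests on both hold: the full diagonalization $D_mF\to F^{\otimes m}$ sends a divided-power monomial to the sum, with coefficient $1$, of all distinct words of the corresponding content; and for a standard $S$ the identity configuration is the only pair of row words contributing to $\pm e_S$, because if a nonempty set $J$ of columns were flipped, the first-row multiset would have to contain $\{S(2,j):j\in J\}$ in place of $\{S(1,j):j\in J\}$, which is impossible since every $S(2,j)$ with $j\in J$ strictly exceeds $\min_{j\in J}S(1,j)$. Hence the diagonal coefficient is $+1$, and taking the order to be lexicographic comparison of the sequences of sorted columns, an induction on columns (using that $S(1,1)$ is the global minimum and cannot occur in row two) shows every monomial occurring in $\pi(d'_\mu(X_S))$ is $\geq e_S$, which is the triangularity you need. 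Two caveats. A small one: since $X_T$ depends only on the row contents of $T$, the claim that the $X_T$ form a basis of $D_\mu F$ must be stated for row-standard tableaux (weakly increasing rows), not arbitrary fillings. The substantive one: the spanning step leans entirely on the integral straightening law, which you describe correctly (well-founded order, strictly decreasing rewriting) but do not derive; note also that on the divided-power side the relations come from the exactness of $0\to D_2F\to F\otimes F\to\Lambda^2F\to 0$, i.e. shuffle-type relations, rather than literal Pl\"ucker relations. That law is a genuine theorem, and it is precisely the straightening law of \cite[pp. 235--236]{ABW} that this paper itself invokes in the proof of (\ref{eq338}); your outline is the standard one and would be complete once that two-row straightening computation is written out.
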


Let $\alpha=(\alpha_1,\ldots,\alpha_n)\in\wedge(n,r)$ and let $M$ be an $A-$module. By $M_\alpha$ we denote the $\alpha-$weight subspace of $M$, defined in {\cite[section 3.2]{Gr}}. According to {\cite[section 2, p.178, eqn(11)]{AB}}, we may identify the $\mathbb{Z}-$module $\mathrm{Hom}_{A}(D_{\alpha}F,M)$ with the $\mathbb{Z}-$module $M_\alpha$. Especially if $M=K_{\mu}F$, where $\mu=(\mu_1,\mu_2)\in\wedge^+(n,r)$, the $\mathbb{Z}-$module $\mathrm{Hom}_{A}(D_{\alpha}F,K_{\mu}F)$ may be identified with the $\mathbb{Z}-$module $\left(K_{\mu}F\right)_{\alpha}$. More specifically, we have the following

\begin{proposition}\label{pro22} 
There exists an isomorphism of $\mathbb{Z}-$modules
\begin{equation*}
\Phi_\alpha:\mathrm{Hom}_{A}(D_{\alpha}F,K_{\mu}F)\xrightarrow{\cong}\left(K_{\mu}F\right)_{\alpha},
\end{equation*}
where $\Phi_\alpha(\tau)=\tau(1^{(\alpha_1)}\otimes 2^{(\alpha_2)}\otimes\ldots\otimes n^{(\alpha_n)})$, for each $\tau\in\mathrm{Hom}_{A}(D_{\alpha}F,K_{\mu}F)$.
\end{proposition}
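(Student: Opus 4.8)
The plan is to exhibit $\Phi_\alpha$ as the evaluation map at the distinguished weight vector $e_\alpha:=1^{(\alpha_1)}\otimes 2^{(\alpha_2)}\otimes\cdots\otimes n^{(\alpha_n)}\in D_\alpha F$ and to prove it is an isomorphism by reducing to the standard description of homomorphisms out of a projective module of the form $A\xi$ with $\xi$ idempotent. Recall from Green's theory \cite{Gr} that $A=S_{\mathbb{Z}}(n,r)$ possesses pairwise orthogonal idempotents $\{\xi_\beta:\beta\in\wedge(n,r)\}$ summing to $1$, characterized by $\xi_\beta M=M_\beta$ for every $A$-module $M$. The structural input I would isolate first is the isomorphism $D_\alpha F\cong A\xi_\alpha$ of left $A$-modules, sending the canonical generator $e_\alpha$ to $\xi_\alpha$; this is the module-theoretic content underlying the projectivity of $D_\alpha F$ \cite[Proposition 2.1]{AB} and the weight-space identification \cite[eqn~(11)]{AB}. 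In particular $e_\alpha$ is a weight vector of weight exactly $\alpha$, so $\xi_\alpha e_\alpha=e_\alpha$, and $e_\alpha$ generates $D_\alpha F$ over $A$.

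Granting this, the proof reduces to three short verifications, which I would carry out in order. \emph{Well-definedness}: for $\tau\in\Hom_A(D_\alpha F,K_\mu F)$, the relation $\xi_\alpha e_\alpha=e_\alpha$ together with $A$-linearity gives $\tau(e_\alpha)=\tau(\xi_\alpha e_\alpha)=\xi_\alpha\tau(e_\alpha)\in\xi_\alpha K_\mu F=(K_\mu F)_\alpha$, so that $\Phi_\alpha$ indeed lands in the $\alpha$-weight space. \emph{Injectivity}: since $e_\alpha$ generates $D_\alpha F$, any homomorphism $\tau$ is determined by $\tau(e_\alpha)$, whence $\Phi_\alpha(\tau)=0$ forces $\tau=0$. \emph{Surjectivity}: given $m\in(K_\mu F)_\alpha=\xi_\alpha K_\mu F$, the assignment $a\xi_\alpha\mapsto am$ (for $a\in A$) is well defined, because $a\xi_\alpha=b\xi_\alpha$ and $m=\xi_\alpha m$ force $am=a\xi_\alpha m=b\xi_\alpha m=bm$, and it defines an $A$-homomorphism $\tau\colon A\xi_\alpha\cong D_\alpha F\to K_\mu F$ with $\tau(e_\alpha)=m$. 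The $\mathbb{Z}$-linearity of $\Phi_\alpha$ is immediate from the formula, so $\Phi_\alpha$ is the asserted isomorphism. I would also remark that nothing in this argument uses special features of $K_\mu F$: the same evaluation map yields $\Hom_A(D_\alpha F,M)\cong M_\alpha$ for an arbitrary $A$-module $M$, recovering the general identification quoted just before the statement.

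The main obstacle is the structural fact flagged in the first paragraph, namely that $e_\alpha$ is simultaneously a weight-$\alpha$ vector and a generator of $D_\alpha F$, i.e.\ that $D_\alpha F\cong A\xi_\alpha$ with $e_\alpha\mapsto\xi_\alpha$. This is where the divided-power module structure and the action of the Schur algebra genuinely enter, and it is precisely the content I would import from \cite{AB}. Once it is in place, the remaining steps are the routine ``$\Hom$ out of $A\xi$'' computation indicated above, and the proposition follows.
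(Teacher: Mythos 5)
Your proof is correct, but note that the paper itself contains no proof of Proposition \ref{pro22}: the statement is presented as a recollection, justified entirely by the citation to \cite[section 2, p.178, eqn (11)]{AB}, which is exactly the identification $\Hom_A(D_\alpha F,M)\cong M_\alpha$ that you set out to prove. So your proposal is not so much an alternative route as a self-contained reconstruction of the fact the paper imports. Your reconstruction is the standard one and it is sound: Green's orthogonal idempotents $\xi_\beta$ with $\xi_\beta M=M_\beta$ (this is literally the definition of weight space in \cite[section 3.2]{Gr}, which the paper also uses), the identification $D_\alpha F\cong A\xi_\alpha$ carrying the canonical weight vector $e_\alpha=1^{(\alpha_1)}\otimes\cdots\otimes n^{(\alpha_n)}$ to $\xi_\alpha$, and then the routine evaluation argument $\Hom_A(A\xi_\alpha,M)\cong\xi_\alpha M$, whose three steps (well-definedness via $\xi_\alpha e_\alpha=e_\alpha$, injectivity via cyclicity, surjectivity via $x\mapsto xm$) you carry out correctly. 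The one caveat is that your key structural input, $D_\alpha F\cong A\xi_\alpha$ with $e_\alpha\mapsto\xi_\alpha$, is itself taken from \cite{AB} rather than proved, so your argument and the paper's citation ultimately rest on the same external ingredient; what your write-up buys is transparency about how the evaluation formula $\Phi_\alpha(\tau)=\tau(e_\alpha)$ arises from that ingredient, plus the (correct) observation that nothing is special about $K_\mu F$ --- the same map gives $\Hom_A(D_\alpha F,M)\cong M_\alpha$ for arbitrary $M$, which is precisely the general statement the paper quotes just before the proposition.
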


Theorem \ref{thm21} and the definition of $\left(K_{\mu}F \right)_{\alpha}$ yield the following
\begin{theorem}\label{thm23} 
Let $\alpha=(\alpha_1,\ldots,\alpha_n)\in\wedge(n,r)$ and $\mu=(\mu_1,\mu_2)\in\wedge^+(n,r).$ The~ set 
\begin{equation*}
\{d'_\mu(\mathrm{X}_T): T \text{ is a standard tableau of shape } \mu \text{ and content } \alpha\}
\end{equation*}
is a basis of the $\mathbb{Z}-$module  $\left(K_{\mu}F\right)_{\alpha}$ and is called the standard basis of $\left(K_{\mu}F\right)_{\alpha}$.
\end{theorem}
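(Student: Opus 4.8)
The plan is to deduce the statement directly from Theorem \ref{thm21} together with the weight-space decomposition of $K_{\mu}F$. Recall that, as an $A$-module, $K_{\mu}F$ splits as the direct sum of its weight subspaces,
\begin{equation*}
K_{\mu}F=\bigoplus_{\beta\in\wedge(n,r)}\left(K_{\mu}F\right)_{\beta},
\end{equation*}
each summand being a free $\mathbb{Z}$-module. By Theorem \ref{thm21} the set $B=\{d'_\mu(\mathrm{X}_T):T\text{ standard of shape }\mu\}$ is a $\mathbb{Z}$-basis of $K_{\mu}F$, so it suffices to show that every element of $B$ is homogeneous for this decomposition and to identify its weight.

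The one substantive point to verify is that $d'_\mu(\mathrm{X}_T)$ lies in the weight subspace indexed by the content of $T$. Writing $\mathrm{X}_T=\mathrm{X}_T(1)\otimes\mathrm{X}_T(2)$ with $\mathrm{X}_T(i)=1^{(a_{i1})}\cdots n^{(a_{in})}$, each tensor factor is a divided-power monomial in $DF$ of weight $(a_{i1},\ldots,a_{in})$; hence $\mathrm{X}_T$ is a weight vector in $D_{\mu}F$ of weight $\big(\sum_i a_{i1},\ldots,\sum_i a_{in}\big)$, which is precisely the content of $T$. Since $d'_\mu$ is a morphism of $A$-modules it preserves the weight decomposition, so $d'_\mu(\mathrm{X}_T)\in\left(K_{\mu}F\right)_{\beta}$, where $\beta$ denotes the content of $T$. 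This uses only the construction of $\mathrm{X}_T$ in {\cite[section II.2]{ABW}} and the definition of the weight spaces in {\cite[section 3.2]{Gr}}.

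It then remains to carry out a routine direct-sum argument. Partitioning $B$ by content gives $B=\bigsqcup_{\beta}B_{\beta}$, where $B_{\beta}$ collects those $d'_\mu(\mathrm{X}_T)$ with $T$ of content $\beta$, and by the previous step $B_{\beta}\subseteq\left(K_{\mu}F\right)_{\beta}$. Applying the projection of $K_{\mu}F$ onto $\left(K_{\mu}F\right)_{\alpha}$ to the unique expansion, in the basis $B$, of an arbitrary element of $\left(K_{\mu}F\right)_{\alpha}$ shows that $B_{\alpha}$ spans $\left(K_{\mu}F\right)_{\alpha}$ over $\mathbb{Z}$; the linear independence of $B_{\alpha}$ is inherited from that of $B$. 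Hence $B_{\alpha}$ is a $\mathbb{Z}$-basis of $\left(K_{\mu}F\right)_{\alpha}$, as claimed.

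I do not anticipate any genuine obstacle: the only content-bearing step is confirming that $d'_\mu$ is weight-preserving and that the content of $T$ records the weight of $\mathrm{X}_T$, both immediate from the definitions. Everything else is the general fact that a $\mathbb{Z}$-basis consisting of homogeneous vectors restricts, on each graded piece, to a $\mathbb{Z}$-basis of that piece.
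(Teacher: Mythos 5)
Your proof is correct and follows essentially the same route as the paper, which simply asserts that Theorem \ref{thm21} together with the definition of $\left(K_{\mu}F\right)_{\alpha}$ yields the result; you have merely supplied the details (each $d'_\mu(\mathrm{X}_T)$ is a weight vector of weight equal to the content of $T$, and a basis of homogeneous vectors restricts to a basis of each weight space). No gaps.
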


Throughout this paper, we use the following notation of matrices:
\begin{itemize}
\item $O_{m\times n}$ is the  $m\times n$ zero matrix and $O_n$ is the $n\times n$ zero matrix
\item $I_n$ is the $n\times n$ identity matrix
\item $E_{i,j}$ is a matrix such that the $(i,j)-$element is equal to 1,  and each of the rest elements is equal to zero
\item $E_i$ is a square matrix such that the $(i,i)-$element is equal to 1,  and each of the rest elements is equal to zero
\item the missing entries of any matrix are equal to zero
\item $\Gamma_l$ is the $l-th$ row of a matrix or a block of  matrices
\item $\Sigma_t$ is the $t-th$ column of a matrix or a block of matrices.
\end{itemize}

Finally, let $e(m)=\{e_1,\ldots,e_m\}$ and $\epsilon(n)=\{\epsilon_1,\ldots,\epsilon_n\}$ be the standard basis of the $\mathbb{Z}-$module
$\mathbb{Z}^m$ and $\mathbb{Z}^n$, respectively. If $f:\mathbb{Z}^m\to\mathbb{Z}^n$ is a homomorphism of $\mathbb{Z}-$modules, then
$(f : e(m), \epsilon(n))\in\mathbb{Z}^{n\times m}$  is the matrix of $f$ in terms of $e(m)$ and $\epsilon(n)$.
	 
\section{$\mathrm{Ext}_A^2(K_{\lambda}F,K_{\mu}F)$ and its presentation matrix $M$}
We consider partitions of the form $\lambda = (a,1^b)$ and $\mu = (a+1,b-1)$, where  $a+1>b-1$. Let $F$ be a free $\mathbb{Z}-$module of finite rank $n$. Let $A=S_\mathbb{Z}(n,r)$ be the integral Schur algebra of degree $r=\vert\lambda\vert=\vert\mu\vert=a+b$. Suppose $n\geq r$. 
\begin{proposition}\label{pro31} 
For $b=2$, $\mathrm{Ext}_A^2(K_{\lambda}F,K_{\mu}F)=0$.
\end{proposition}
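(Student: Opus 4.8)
The plan is to realize $\mathrm{Ext}_A^2(K_\lambda F,K_\mu F)$ as the second cohomology of the cochain complex obtained by applying $\mathrm{Hom}_A(-,K_\mu F)$ to a projective resolution
\[
\cdots \to P_3 \xrightarrow{d_3} P_2 \xrightarrow{d_2} P_1 \xrightarrow{d_1} P_0 \to K_\lambda F \to 0,
\]
with each $P_i$ a finite direct sum of projectives $D_\alpha F$, taken from \cite{Ma}. Setting $b=2$, so that $\lambda=(a,1,1)$ and $\mu=(a+1,1)$, I would first extract from \cite{Ma} the three consecutive terms $P_1,P_2,P_3$ and the $A$-maps $d_2,d_3$ that are relevant at position $2$.

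Applying $\mathrm{Hom}_A(-,K_\mu F)$ and using Proposition \ref{pro22}, each summand $\mathrm{Hom}_A(D_\alpha F,K_\mu F)$ becomes the weight space $(K_\mu F)_\alpha$, for which Theorem \ref{thm23} supplies the standard-tableau basis. In these bases the induced maps $d_2^{*}$ and $d_3^{*}$ are explicit integer matrices; the one governing $d_2^{*}$ is the presentation matrix $M$ of the section title, specialized to $b=2$. Proving the proposition then amounts to showing that
\[
\mathrm{Hom}_A(P_1,K_\mu F)\xrightarrow{\ d_2^{*}\ }\mathrm{Hom}_A(P_2,K_\mu F)\xrightarrow{\ d_3^{*}\ }\mathrm{Hom}_A(P_3,K_\mu F)
\]
is exact at the middle term, i.e.\ $\ker d_3^{*}=\mathrm{im}\,d_2^{*}$.

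Because the integral extension groups between Weyl modules are finite (as recalled in the introduction), and $\mathbb{Q}$ is flat over $\mathbb{Z}$, the complex is automatically exact at position $2$ after tensoring with $\mathbb{Q}$; thus the rational picture carries no information and the whole statement is integral. The group $\ker d_3^{*}/\mathrm{im}\,d_2^{*}$ is therefore trivial exactly when $\mathrm{im}\,d_2^{*}$ is a direct summand of $\ker d_3^{*}$, equivalently when the relevant invariant factors of $M$ are all equal to $\pm 1$. Computing these invariant factors is the heart of the matter and the step I expect to be the main obstacle: one must control the Smith normal form of $M$ and verify that no prime divides any of its nontrivial invariant factors. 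The favorable feature of $b=2$ is that $\mu=(a+1,1)$ admits very few standard tableaux, so the matrices involved are small; I would aim to finish by exhibiting an explicit unimodular submatrix of $M$ (a block with entries $\pm 1$) that accounts for all of $\mathrm{im}\,d_2^{*}$ inside $\ker d_3^{*}$, forcing the cokernel to vanish.
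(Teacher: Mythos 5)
Your proposal is a strategy outline rather than a proof: the decisive step is explicitly deferred. You write that ``computing these invariant factors is the heart of the matter'' and that you ``would aim to finish by exhibiting an explicit unimodular submatrix,'' but no such submatrix is produced and no entry of the presentation matrix for $b=2$ is actually computed. Since the entire content of the proposition is precisely that this Smith-normal-form computation comes out trivial, what you have written reduces the statement to its hard part and stops there. There is also a structural slip: for $b=2$ the resolution of \cite{Ma} has length $b=2$, so $P_3=0$ and $(d_3)_*=0$; hence $\mathrm{Ext}_A^2(K_{\lambda}F,K_{\mu}F)$ is the \emph{entire} cokernel of $(d_2)_*$ (which is torsion by the rational-exactness argument you invoke), and what must be verified is that $(d_2)_*$ is surjective. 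Your framing in terms of exactness at the middle of a three-term complex with a nonzero $P_3$ is harmless but not quite what happens in this case.

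The paper proves the proposition in one line, and by a completely different route: for $b=2$ one has $\lambda=(a,1,1)$ and $\mu=(a+1,1)$, so \emph{both} partitions are hooks, and integral extensions between hook Weyl modules were already determined in \cite{MS}; the proof is the citation of the Remark after Theorem 4.1 of \cite{MS} with $i=2$, $k=1$, $h=(a,1,1)$. This is exactly why the paper isolates $b=2$ before setting up the resolution and presentation-matrix machinery for $b\geq 3$. Your computational route is in principle viable --- it is the same method the paper itself uses for $b\geq 3$ --- but to make it a proof you would have to write down $P_1(\lambda)$ and $P_2(\lambda)$ from \cite{Ma} for $b=2$, express $(d_2)_*$ in the standard-tableau bases of Theorem \ref{thm23}, and verify that its cokernel vanishes; as it stands, that verification is missing.
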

\begin{proof}
{\cite[the Remark after Theorem 4.1, for $i=2, k=1$ and $h=(a,1,1)$]{MS}}.
\end{proof}

Now, let $b\geq3$. According to {\cite[Theorem 1]{Ma}}, there is a finite projective resolution of length $b$
\begin{align*}
0\xrightarrow{}P_b(\lambda)\xrightarrow{d_b(\lambda)}P_{b-1}(\lambda)\xrightarrow{d_{b-1}(\lambda)}\cdots\xrightarrow{d_2(\lambda)}P_1(\lambda)
\xrightarrow{d_1(\lambda)}P_0(\lambda)
\xrightarrow{d'_\lambda}K_{\lambda}F\xrightarrow{}0
\end{align*}
of the Weyl module $K_{\lambda}F$ over $A$. We note that $P_0(\lambda)=D_{\lambda}F$ {\cite[section 2.3]{MS}}. Applying the contravariant functor $\mathrm{Hom}_{A}(-,K_{\mu}F)$ to the above deleted resolution, we obtain the complex
\begin{align*}
&0\xrightarrow{}\mathrm{Hom}_{A}(P_0(\lambda),K_{\mu}F)\xrightarrow{\left(d_1(\lambda)\right)_{*}}\mathrm{Hom}_{A}(P_1(\lambda),K_{\mu}F)\xrightarrow{\left(d_2(\lambda)\right)_{*}}\\
&\xrightarrow{\left(d_2(\lambda) \right)_{*}}\mathrm{Hom}_{A}(P_2(\lambda),K_{\mu}F)\xrightarrow{\left(d_3(\lambda)\right)_{*}}\dots \xrightarrow{\left(d_b(\lambda) \right)_{*}}\mathrm{Hom}_{A}(P_b(\lambda),K_{\mu}F)\xrightarrow{}0
\end{align*} 
where $\left(d_i(\lambda)\right)_{*}:=\mathrm{Hom}_{A}(d_i(\lambda),K_{\mu}F)$. From {\cite[last paragraph of section 2.3]{MS}}, we obtain
\begin{proposition}\label{pro32}
The $\mathbb{Z}-$module $\mathrm{Ext}_A^2(K_{\lambda}F,K_{\mu}F):=\dfrac{\operatorname{ker}\left(d_3(\lambda)\right)_{*}}{\operatorname{Im}\left(d_2(\lambda)\right)_{*}}$ is isomorphic to the torsion submodule $T(coker\left(d_2(\lambda)\right)_{*})$ of the cokernel of the map $\left(d_2(\lambda)\right)_{*}$.
\end{proposition}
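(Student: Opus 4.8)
The plan is to derive this as a formal consequence of two facts: that all the Hom-modules appearing in the complex are free $\mathbb{Z}$-modules of finite rank, and that $\mathrm{Ext}_A^2(K_{\lambda}F,K_{\mu}F)$ is a torsion group. For the first, I would note that each $P_i(\lambda)$ is a finite direct sum of modules of the form $D_{\alpha}F$, so by Proposition \ref{pro22} the module $\mathrm{Hom}_{A}(P_i(\lambda),K_{\mu}F)$ decomposes as a finite direct sum of weight spaces $(K_{\mu}F)_{\alpha}$, each of which carries the explicit $\mathbb{Z}$-basis provided by Theorem \ref{thm23}. In particular $\mathrm{Hom}_{A}(P_2(\lambda),K_{\mu}F)$ and $\mathrm{Hom}_{A}(P_3(\lambda),K_{\mu}F)$ are free, hence torsion-free.

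Writing $f=\left(d_2(\lambda)\right)_{*}$ and $g=\left(d_3(\lambda)\right)_{*}$, the relation $d_2(\lambda)\circ d_3(\lambda)=0$ gives $g\circ f=0$, so that $\operatorname{Im}f\subseteq\operatorname{ker}g\subseteq\mathrm{Hom}_{A}(P_2(\lambda),K_{\mu}F)$. This chain of inclusions yields the short exact sequence of $\mathbb{Z}$-modules
\begin{equation*}
0\to\frac{\operatorname{ker}g}{\operatorname{Im}f}\to\frac{\mathrm{Hom}_{A}(P_2(\lambda),K_{\mu}F)}{\operatorname{Im}f}\to\frac{\mathrm{Hom}_{A}(P_2(\lambda),K_{\mu}F)}{\operatorname{ker}g}\to0,
\end{equation*}
i.e. $0\to\mathrm{Ext}_A^2(K_{\lambda}F,K_{\mu}F)\to\operatorname{coker}f\to\mathrm{Hom}_{A}(P_2(\lambda),K_{\mu}F)/\operatorname{ker}g\to0$. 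The rightmost term is isomorphic to $\operatorname{Im}g\subseteq\mathrm{Hom}_{A}(P_3(\lambda),K_{\mu}F)$, so it is a submodule of a free $\mathbb{Z}$-module and therefore torsion-free.

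The one genuinely non-formal ingredient is that $\mathrm{Ext}_A^2(K_{\lambda}F,K_{\mu}F)$ is torsion. I would either invoke the known finiteness of integral extension groups recalled in the introduction, or argue directly: tensoring the deleted resolution with $\mathbb{Q}$ is exact and, by finite generation of the $P_i(\lambda)$, turns the complex into one computing $\mathrm{Ext}^2_{A\otimes\mathbb{Q}}(K_{\lambda}F\otimes\mathbb{Q},K_{\mu}F\otimes\mathbb{Q})$; this vanishes because the rational Schur algebra $S_{\mathbb{Q}}(n,r)$ is semisimple, and since homology commutes with the flat base change $-\otimes\mathbb{Q}$, we get $\mathrm{Ext}_A^2(K_{\lambda}F,K_{\mu}F)\otimes\mathbb{Q}=0$. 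Granting torsion-ness, a short diagram chase closes the argument: the submodule $\mathrm{Ext}_A^2(K_{\lambda}F,K_{\mu}F)$ lands inside $T(\operatorname{coker}f)$, while conversely any torsion element of $\operatorname{coker}f$ maps to a torsion element of the torsion-free quotient $\operatorname{Im}g$, hence to $0$, and so already lies in $\mathrm{Ext}_A^2(K_{\lambda}F,K_{\mu}F)$. This forces the inclusion to be an equality, giving the desired isomorphism $\mathrm{Ext}_A^2(K_{\lambda}F,K_{\mu}F)\cong T(\operatorname{coker}f)$. Since the author cites {\cite[last paragraph of section 2.3]{MS}}, the torsion step is presumably quoted rather than reproven, and everything else is bookkeeping with the exact sequence.
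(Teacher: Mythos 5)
Your proof is correct, but it is worth pointing out that the paper does not actually prove Proposition \ref{pro32} at all: its entire ``proof'' is the citation to the last paragraph of section 2.3 of \cite{MS}, so you have in effect reconstructed the argument that the paper outsources. Your reconstruction is sound at every step: the modules $\mathrm{Hom}_{A}(P_i(\lambda),K_{\mu}F)$ are free of finite rank because $P_i(\lambda)$ is a finite direct sum of modules $D_{\alpha}F$ (by the structure of the resolution in \cite{Ma}) and $\mathrm{Hom}_{A}(D_{\alpha}F,K_{\mu}F)\cong(K_{\mu}F)_{\alpha}$ has the standard-tableau basis of Theorem \ref{thm23} (which applies since $\mu=(a+1,b-1)$ has two rows); the inclusion $\operatorname{Im}f\subseteq\operatorname{ker}g$ and the resulting short exact sequence
$0\to\operatorname{ker}g/\operatorname{Im}f\to\operatorname{coker}f\to\operatorname{Im}g\to 0$
are standard; $\operatorname{Im}g$ is torsion-free as a submodule of the free module $\mathrm{Hom}_{A}(P_3(\lambda),K_{\mu}F)$; and the torsion-ness of $\mathrm{Ext}_A^2$ follows either from the known finiteness of integral extension groups or from your flat base change to $\mathbb{Q}$ together with semisimplicity of $S_{\mathbb{Q}}(n,r)$ (the base-change step is legitimate here precisely because the $P_i(\lambda)$ are finitely generated projective and all Hom-modules are finitely generated free over $\mathbb{Z}$). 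The final two-sided inclusion argument identifying $\operatorname{ker}g/\operatorname{Im}f$ with $T(\operatorname{coker}f)$ is exactly right. The only inaccuracy is your closing guess that merely ``the torsion step'' is quoted from \cite{MS}: in fact the whole proposition is quoted, so what your argument buys is a self-contained justification, within the paper's own toolkit, of the fact on which the entire computation of the presentation matrix $M$ rests.
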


Let $d_1=\pm\mathrm{g.c.d.}\{2,\omega\}$, where $\omega=a+2$, if $b$ is even and $\omega=a+1$, if $b$ is odd, $d_2=\pm\mathrm{g.c.d.}\{6, 2(a+1), 3(a+2), (a+1)(a+2)\}$ and $d_3=\pm\mathrm{g.c.d.}\{ 6, 2(a+2), 3(a+1),$\\$ (a+1)(a+2)\}$. In this paper, we will prove the following
\begin{theorem}\label{thm33}
Let $\lambda=(a,1^b)$ and $\mu=(a+1,b-1)$, where $b\geq3$ and $a+1>b-1$. Then
\begin{enumerate}
\item for $b=3$ or $b\geq 6$, $\mathrm{Ext}_A^2(K_{\lambda}F, K_{\mu}F)\cong\mathbb{Z}_{d_1}=
\begin{cases}\mathbb{Z}_2,&\text{if } a \equiv b \bmod 2 \\ 0, &\text{if } a \not\equiv b\bmod 2,\end{cases}$\vspace*{0,15cm}
\item for $b=4$, $\mathrm{Ext}_A^2(K_{\lambda}F, K_{\mu}F)\cong\mathbb{Z}_{d_2}=\begin{cases}
\mathbb{Z}_6,&\text{if } a \text{ is even and } 3\mid a+1 \\
\mathbb{Z}_2, &\text{if } a \text{ is even and } 3\nmid a+1 \\
\mathbb{Z}_3, &\text{if } a \text{ is odd and } 3 \mid a+1 \\
0, &\text{if } a \text{ is odd and } 3 \nmid a+1, \\
\end{cases}$\vspace*{0,15cm}
\item for $b=5$, $\mathrm{Ext}_A^2(K_{\lambda}F, K_{\mu}F)\cong\mathbb{Z}_{d_3}=\begin{cases}
\mathbb{Z}_6, &\text{if } a \text{ is odd and } 3\mid a+2 \\
\mathbb{Z}_2, &\text{if } a \text{ is odd and } 3\nmid a+2 \\
\mathbb{Z}_3, &\text{if } a \text{ is even and } 3 \mid a+2 \\
0, &\text{if } a \text{ is even and } 3 \nmid a+2.\end{cases}$
\end{enumerate}
\end{theorem}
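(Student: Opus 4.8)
The plan is to turn Proposition~\ref{pro32} into an explicit linear-algebra computation over $\mathbb{Z}$. Since $\mathrm{Ext}_A^2(K_\lambda F,K_\mu F)\cong T(\operatorname{coker}(d_2(\lambda))_*)$, it suffices to write down the integer matrix $M$ representing $(d_2(\lambda))_*$ with respect to suitable bases and then read the torsion of its cokernel off the Smith normal form: the torsion of the cokernel of an integer matrix is the direct sum of the cyclic groups $\mathbb{Z}/s_i$ taken over its nonzero, nonunit invariant factors $s_i$. Thus the whole theorem reduces to computing the invariant factors of $M$ and checking that exactly one of them is a nonunit, equal to $d_1$, $d_2$ or $d_3$ according to the value of $b$, which is precisely what makes each $\mathrm{Ext}^2$ cyclic.

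First I would fix bases. By Proposition~\ref{pro22}, each divided-power summand $D_\alpha F$ occurring in $P_1(\lambda)$ and $P_2(\lambda)$ contributes a direct summand $\mathrm{Hom}_A(D_\alpha F,K_\mu F)\cong (K_\mu F)_\alpha$, and by Theorem~\ref{thm23} the latter carries the standard basis indexed by the standard tableaux of shape $\mu=(a+1,b-1)$ and content $\alpha$. In this way $\mathrm{Hom}_A(P_1(\lambda),K_\mu F)$ and $\mathrm{Hom}_A(P_2(\lambda),K_\mu F)$ acquire explicit $\mathbb{Z}$-bases, naturally organised into blocks indexed by the summands appearing in the resolution of \cite{Ma}. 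With these bases chosen, $M=(d_2(\lambda))_*$ is a concrete integer matrix with block structure.

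Next I would compute the entries of $M$. For this I need the explicit form of the differential $d_2(\lambda)\colon P_2(\lambda)\to P_1(\lambda)$ from \cite{Ma}, which is assembled from the comultiplication $\Delta$ and multiplication $m$ of the divided power algebra $DF$. Dualizing and evaluating on the distinguished generators $1^{(\alpha_1)}\otimes\cdots\otimes n^{(\alpha_n)}$ of Proposition~\ref{pro22}, each domain basis vector maps to a $\mathbb{Z}$-combination of elements of $K_\mu F$, which I then rewrite in the standard basis using the straightening relations for the two-rowed Weyl module underlying Theorem~\ref{thm21}. The relation $f^{(p)}f^{(q)}=\tbinom{p+q}{q}f^{(p+q)}$ is what injects the coefficients $a+1$ and $a+2$ into $M$, while the alternating signs carried by $d_2(\lambda)$ combine to produce the small constants $2,3,6$.

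The main obstacle is the final Smith-normal-form computation, and it is here that the dependence on $b$ forces a case split. For $b=4$ and $b=5$ the matrix is modest in size, but its nontrivial invariant factor is a genuine greatest common divisor of several products; I would reduce $M$ by integer row and column operations to a small explicit block and then compare the gcd of its $1\times1$ entries with the gcd of its $2\times2$ minors, identifying these with $d_2$ and $d_3$ respectively, which is exactly where the factor of $3$ (and hence the groups $\mathbb{Z}_3$ and $\mathbb{Z}_6$) can appear. For $b=3$ and for the stable range $b\geq6$ the torsion collapses to the cyclic group $\mathbb{Z}_{d_1}$ with $d_1\in\{1,2\}$; since the length of the resolution grows with $b$, I expect to exhibit a uniform reduction of $M$ whose torsion is insensitive to the extra blocks, leaving a single invariant factor governed by $\gcd\{2,\omega\}$ and hence by the parity of $a$ relative to $b$. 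Verifying that these additional blocks contribute only unit invariant factors, so that the answer genuinely stabilises for $b\geq6$ and matches the $b=3$ case, is the delicate point of the argument.
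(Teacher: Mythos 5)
Your proposal follows essentially the same route as the paper: reduce via Proposition~\ref{pro32} to the torsion of $\operatorname{coker}(d_2(\lambda))_*$, realize it as an integer presentation matrix using the standard-tableau bases of Theorem~\ref{thm23} and the divided-power differentials with straightening, and extract the invariant factors by row and column reduction together with the gcd-of-minors criterion (the paper's Proposition~\ref{pro446}), which for $b=4,5$ yields exactly the gcd expressions $d_2$, $d_3$ from the $2\times2$ minors of a small residual block. The only point where the paper's execution diverges slightly from your expectation is that the uniform reduction holds for $b\geq7$, while $b=6$ requires a modified version of one operation before joining the stable pattern --- precisely the delicate point you flagged.
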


To determine $\mathrm{Ext}_A^2(K_{\lambda}F, K_{\mu}F)$, we need to study $\mathrm{Hom}_{A}(P_{\star}(\lambda),K_{\mu}F)$,where $\star =1,2$.
\subsection{}$\boldsymbol{\mathrm{Hom}_{A}(P_1(\lambda),K_{\mu}F).}$ From {\cite[section 2.3]{MS}}, we know that
\begin{equation*} 
P_1(\lambda)=\oplus D(m_1,\ldots,m_b), 
\end{equation*}
where the sum ranges over all sequences $(m_1,\ldots,m_b)$ of positive integers such that $m_1+\ldots+m_b=a+b$  \text{ and }  $a\leq m_1\leq a+1$. It follows that  $(m_1,\dots,m_b)=(a+1,1^{b-1})$ or $(m_1,\dots,m_b)=(a,1^{i-2},2,1^{b-i})$, where $2\leq i\leq b$. We set
\begin{equation}\label{eq311}
v_1:=(a+1,1^{b-1}) \text{ and } v_i:=(a,1^{i-2},2,1^{b-i}),\text{ where } 2\leq i\leq b.
\end{equation}
The ordering of $v_i$ is defined as $v_1<v_2<\ldots<v_b$ and may be identified with the reverse lexicographic order. Now, we have
\begin{equation}\label{eq312}
P_1(\lambda)=\mathop{\oplus}_{i=1}^{b}D_{v_i}F,
\end{equation}
consequently
\begin{equation}\label{eq313}
\mathrm{Hom}_{A}(P_1(\lambda),K_{\mu}F)\cong \mathop{\oplus}_{i=1}^{b}\mathrm{Hom}_{A}(D_{v_i}F,K_{\mu}F).
\end{equation}
We note that (\ref{eq313}) induces the embedding
\begin{equation}\label{eq314}
 i(v_i):\mathrm{Hom}_{A}(D_{v_i}F,K_{\mu}F)\to\mathrm{Hom}_{A}(P_1 (\lambda),K_{\mu}F),\text{ where }  1\leq i\leq b.
\end{equation}
$\bullet$ $\boldsymbol{\mathrm{Hom}_{A}( D_{v_1}F,K_{\mu}F).}$ From Proposition \ref{pro22} for $\alpha=v_1$, we obtain the isomorphism of $\mathbb{Z}-$modules 
\begin{equation*}
\Phi_{v_1}:\mathrm{Hom}_{A}(D_{v_1}F,K_{\mu}F)\xrightarrow{\cong}\left(K_{\mu}F\right)_{v_1}.
\end{equation*}
 
There is exactly one standard tableau $S_1$ of shape $\mu = (a+1,b-1)$ and content $v_1=(a+1,1^{b-1})$, such that
\begin{equation*}
\mathrm{X}_{S_1}=1^{(a+1)}\otimes2\dots b.
\end{equation*}
Therefore, according to Theorem \ref{thm23}, the set $\{d'_\mu(\mathrm{X}_{S_1})\}$ is a basis of the $\mathbb{Z}-$module $\left(K_{\mu}F\right)_{v_1}$. We set
\begin{equation}\label{eq315}
\sigma_1:=d'_\mu\circ(1\otimes m)\in\mathrm{Hom}_{A}(D_{v_1}F,K_{\mu}F),
\end{equation}
where $m:(D_1F)^{\otimes(b-1)}\to D_{b-1}F$ is the indicated component of the multiplication map of the Hopf algebra $DF$. It is easy to verify that
\begin{equation}\label{eq316}
\Phi_{v_1}(\sigma_1)=
d'_\mu(\mathrm{X}_{S_1}),
\end{equation}
consequently
\begin{equation}\label{eq317}
\text{the set } \{\sigma_1\} \text{ is a basis of the } \mathbb{Z}-\text{module } \mathrm{Hom}_{A}(D_{v_1}F,K_{\mu}F).
\end{equation}
It follows that there exists an isomorphism of $\mathbb{Z}-$modules 
\begin{equation}\label{eq318}
\phi_1:\mathrm{Hom}_{A}(D_{v_1}F,K_{\mu}F)\xrightarrow{\cong}\mathbb{Z},\text{ such that }\phi_1(\sigma_1)=1.
\end{equation}
$\bullet$ $\boldsymbol{\mathrm{Hom}_{A}(D_{v_i}F, K_{\mu}F),\ 2\leq i\leq b.}$ Let $2\leq i\leq b$. We consider the $\mathbb{Z}-$module isomorphism $\Phi_{v_i}:\mathrm{Hom}_{A}(D_{v_i}F,K_{\mu}F)\xrightarrow{\cong}\left(K_{\mu}F\right)_{v_i}$.

There are exactly $b-1$ standard tableaux $S_{ij}$, where $2\leq j\leq b$, of shape $\mu =(a+1,b-1)$ and content $v_i=(a,1^{i-2},2,1^{b-i})$, such that
\begin{equation}\label{eq319}
\mathrm{X}_{S_{ij}}=\begin{cases}
1^{(a)}j\otimes 2\dots\widehat{j}\dots (i-1)i^{(2)}(i+1)\dots b,&\text{ if } 2\leq j\leq i-1\\
1^{(a)}i\otimes 2\dots b,&\text{ if }j=i\\
1^{(a)}j\otimes 2\dots (i-1)i^{(2)}(i+1)\dots \widehat{j}\dots b,&\text{ if } i+1\leq j\leq b,
\end{cases}
\end{equation}
where  $\widehat{j}$ means that $j$ is omitted. Therefore, the set $\{d'_\mu(\mathrm{X}_{S_{ij}}):2\leq j\leq b\}$ is a basis of the $\mathbb{Z}-$module $\left(K_{\mu}F\right)_{v_i}$.

For $2\leq j\leq b$ with  $j\neq i$, define the map $\tau_j\in\mathrm{Hom}_{A}(D_{v_i}F,D_{\mu}F)$
as follows: for
\begin{equation*}
x_1\otimes x_2\otimes\ldots\otimes x_b \in D_{v_i}F, \;\tau_j(x_1\otimes x_2\otimes\ldots\otimes x_b)=x_1x_j\otimes x_2\ldots\widehat{x_j}\ldots x_b, \end{equation*}
where $x_1x_j$ and $x_2\ldots\widehat{x_j}\ldots x_b$  denote the product in the divided power algebra $DF$. Likewise, we define the map $\tau_i\in\mathrm{Hom}_{A}(D_{\lambda}F,D_{\mu}F)$. 
Now, we set
\begin{equation}\label{eq3110}
\sigma_{ij}:=\begin{cases}
d'_\mu\circ\tau_j\in\mathrm{Hom}_{A}(D_{v_i}F,K_{\mu}F),\text{ if } 2\leq j\leq b \text{ and } j\neq i\\
d'_\mu\circ\tau_i\circ(1^{\otimes(i-1)}\otimes \Delta\otimes1^{\otimes(b-i)})\in\mathrm{Hom}_{A}(D_{v_i}F,K_{\mu}F),\text{ if } j=i, 
\end{cases}
\end{equation} 
where $\Delta:D_2F\to D_1F\otimes D_1F$ is the indicated component of the diagonalization map of the Hopf algebra $DF$. It is easy to verify that
\begin{equation}\label{eq3111}
\Phi_{v_i}(\sigma_{ij})=d'_\mu(\mathrm{X}_{S_{ij}}),\text{ where } 2\leq j\leq b,
\end{equation}
consequently 
\begin{equation}\label{eq3112}
\text{the set } \{\sigma_{ij}: 2\leq j\leq b \} \text{ is a basis of the }
\mathbb{Z}-\text{module } \mathrm{Hom}_{A}(D_{v_i}F,K_{\mu}F). 
\end{equation}
It follows that there exists an isomorphism of $\mathbb{Z}-$modules
\begin{equation}\label{eq3113}
\phi_i:\mathrm{Hom}_{A}(D_{v_i}F,K_{\mu}F)\xrightarrow{\cong}\mathbb{Z}^{b-1}, \text{ such that } \phi_i(\sigma_{ij})=e_{j-1},  \text{ for }   2\leq j\leq b.
\end{equation}

From (\ref{eq313}), (\ref{eq314}), (\ref{eq317}), (\ref{eq318}), (\ref{eq3112}) and (\ref{eq3113}), we conclude that there exists an isomorphism of $\mathbb{Z}-$modules
\begin{equation}\label{eq3114}
\phi:=\mathop{\oplus}\limits_{i=1}^{b}\phi_i:\mathrm{Hom}_{A}(P_1(\lambda),K_{\mu}F)\xrightarrow{\cong}\mathbb{Z}^t,\text{ where } t=1+(b-1)^2,
\end{equation}
and $\{\overline{\sigma_1},\overline{\sigma_{ij}}: 2\leq i, j\leq b\}$ is a basis of the $\mathbb{Z}-$module $\mathrm{Hom}_{A}(P_1(\lambda),K_{\mu}F)$, where $\overline{\sigma_1}:=i(v_1)(\sigma_1)= {(\sigma_1, 0^{b-1})}$ and   $\overline{\sigma_{ij}}:= i(v_i)(\sigma_{ij})= {(0^{i-1}, \sigma_{ij}, 0^{b-i})}$.

\subsection{$\boldsymbol{\mathrm{Hom}_{A}(P_2(\lambda),K_{\mu}F)}$} We proceed similarly to the case $\mathrm{Hom}_{A}(P_1(\lambda),K_{\mu}F)$. Therefore, we set
\begin{align}\label{eq321}
&u_1:=(a+2,1^{b-2}),\; u_i:=(a+1,1^{i-2},2, 1^{b-1-i}),\text{ where } 2\leq i\leq b-1,\\\nonumber &w_i:=(a,1^{i-2},3,1^{b-1-i}), \text{ where } 2\leq i\leq b-1, \text{ and }\\\nonumber& w_{ij}:=(a,1^{i-2},2,1^{j-1-i},2,1^{b-1-j}), \text{ where } 2\leq i<j\leq b-1. 
\end{align} 
The ordering of $u_i,w_i$ and $w_{ij}$ is defined as $u_1<u_2<\ldots<u_{b-1}<w_2<w_{23}<\\<\ldots <w_{2b-1}<\ldots <w_i<w_{ii+1}<\ldots< w_{ib-1}<\ldots < w_{b-2}<w_{b-2b-1}<w_{b-1}$ and may be identified with the reverse lexicographic order. Now, we have 

\begin{equation}\label{eq322}
P_2(\lambda)=\left(\mathop{\oplus}_{i=1}^{b-1}D_{u_i}F\right)\oplus\left(\mathop{\oplus}_{i=2}^{b-1}\left(D_{w_i}F\oplus\left( \mathop{\oplus}_{j=i+1}^{b-1}D_{w_{ij}}F\right) \right) \right), \text{ and}
\end{equation}
\begin{align}\label{eq323}
\mathrm{Hom}_{A}\left(P_2(\lambda),K_{\mu}F\right)\cong\left(\mathop{\oplus}_{i=1}^{b-1}\mathrm{Hom}_{A}\left(D_{u_i}F,K_{\mu}F\right)\right)\oplus\left(\mathop{\oplus}_{i=2}^{b-1}\left(\mathrm{Hom}_{A}\left(D_{w_i}F,K_{\mu}F\right)\oplus\right.\right.&\\\nonumber
\oplus\left.\left.\left(\mathop{\oplus}_{j=i+1}^{b-1}\mathrm{Hom}_{A}\left(D_{w_{ij}}F,K_{\mu}F \right)\right)\right)\right).
\end{align}
Obviously, (\ref{eq323}) induces the projections
\begin{align}\label{eq324}
&pr(u_i):\mathrm{Hom}_{A}(P_2(\lambda),K_{\mu}F)\to\mathrm{Hom}_{A}(D_{u_i}F,K_{\mu}F),\text{ where } 1\leq i \leq b-1,\\\nonumber
&pr(w_i):\mathrm{Hom}_{A}(P_2(\lambda),K_{\mu}F)\to\mathrm{Hom}_{A}(D_{w_i}F,K_{\mu}F),\text{ where } 2\leq i \leq b-1, \text{ and
}\\\nonumber&pr(w_{ij}):\mathrm{Hom}_{A}(P_2(\lambda),K_{\mu}F)\to\mathrm{Hom}_{A}(D_{w_{ij}}F,K_{\mu}F), \text{ where } 2\leq i<j \leq b-1.
\end{align}
$\bullet$ $\boldsymbol{\mathrm{Hom}_{A}(D_{u_1}F,K_{\mu}F).}$ There is no standard tableau of shape  $\mu=(a+1,b-1)$ and content $u_1=(a+2,1^{b-2})$, therefore $\left(K_{\mu}F\right)_{u_1}=O$. From the $\mathbb{Z}-$module isomorphism $\Phi_{u_1}:\mathrm{Hom}_{A}(D_{u_1}F,K_{\mu}F)\xrightarrow{\cong}\left( K_{\mu}F\right)_{u_1},$ we conclude that
\begin{equation}\label{eq325}
\mathrm{Hom}_{A}(D_{u_1}F,K_{\mu}F)=O.
\end{equation}
$\bullet$ $\boldsymbol{\mathrm{Hom}_{A}(D_{u_i}F,K_{\mu}F),\ 2\leq i\leq b-1.}$ Let $2\leq i\leq b-1$. We proceed similarly to the case  $\mathrm{Hom}_{A}(D_{v_1}F,K_{\mu}F)$. Therefore, we conclude that
\begin{equation}\label{eq326}
\mathrm{X}_{P_i}=1^{(a+1)}\otimes2\ldots (i-1)i^{(2)}(i+1)\ldots(b-1),
\end{equation}
where $P_i$ is the unique standard tableau of shape $\mu=(a+1,b-1)$ and content $u_i=(a+1,1^{i-2},2,1^{b-1-i})$, and we set
\begin{equation}\label{eq327}
\pi_i:=d'_\mu\circ(1\otimes m)\in\mathrm{Hom}_{A}(D_{u_i}F,K_{\mu}F),
\end{equation}
where $m:(D_1F)^{\otimes(i-2)}\otimes D_2F\otimes(D_1F)^{\otimes(b-1-i)}\to D_{b-1}F$ is the indicated component of the multiplication map of the Hopf algebra $DF$. We  easily  verify that
\begin{equation}\label{eq328}
\Phi_{u_i}(\pi_i)=d'_\mu(x_{P_i}),\text{ and }
\end{equation}
there exists an isomorphism of $\mathbb{Z}-$modules
\begin{equation}\label{eq329}
\psi_i:\mathrm{Hom}_{A}(D_{u_i}F,K_{\mu}F)\xrightarrow{\cong}\mathbb{Z}, \text{ such that }\psi_i(\pi_i)=1.
\end{equation}
$\bullet$ $\boldsymbol{\mathrm{Hom}_{A}(D_{w_i}F,K_{\mu}F),\ 2\leq i\leq b-1.}$ Let $2\leq i\leq b-1$. We proceed similarly to the case $\mathrm{Hom}_{A}(D_{v_i}F,K_{\mu}F),\ 2\leq i\leq b$. Therefore, we conclude that
\begin{equation}\label{eq3210}
\mathrm{X}_{P_{ij}}=\begin{cases}
1^{(a)}j\otimes2\ldots\widehat{j}\ldots (i-1)i^{(3)}(i+1)\ldots(b-1),&\text{ if } 2\leq j\leq i-1\\
1^{(a)}i\otimes2\ldots(i-1)i^{(2)}(i+1)\ldots (b-1),&\text{ if }j=i\\
1^{(a)}j\otimes2\ldots(i-1)i^{(3)}(i+1)\ldots \widehat{j}\ldots(b-1),&\text{ if } i+1\leq j\leq b-1,
\end{cases}
\end{equation}
where $P_{ij}$ is a standard tableau of shape $\mu=(a+1,b-1)$ and content $w_i=(a,1^{i-2},3,1^{b-1-i})$, and we set 
\begin{equation}\label{eq3211}
\pi_{ij}:=\begin{cases}
d'_\mu\circ t_j\in\mathrm{Hom}_{A}(D_{w_i}F,K_{\mu}F),\text{ if } 2\leq j\leq b-1\text{ and } j\neq i\\
d'_\mu\circ t_i\circ (1^{\otimes(i-1)}\otimes \Delta\otimes 1^{\otimes(b-1-i)})\in\mathrm{Hom}_{A}(D_{w_i}F,K_{\mu}F),\text{ if } j=i,
\end{cases}
\end{equation}
where $t_j\in\mathrm{Hom}_{A}(D_{w_i}F,D_{\mu}F)$, for $2\leq j\leq b-1$ with $j\neq i$, and $t_i\in\mathrm{Hom}_{A}(D_{v_{i+1}}F,\\D_{\mu}F)$ are defined similarly to the maps $\tau_j$ [see before (\ref{eq3110})], while $\Delta:D_3F\to D_1F\otimes D_2F$ is the indicated component of the diagonalization map of the Hopf algebra $DF$. We easily verify that 
\begin{equation}\label{eq3212}
\Phi_{w_i}(\pi_{ij})=d'_\mu(\mathrm{X}_{P_{ij}}),\text{ where } 2\leq j\leq b-1, 
\end{equation}
and there exists an isomorphism of $\mathbb{Z}-$modules
\begin{equation}\label{eq3213}
\theta_i:\mathrm{Hom}_{A}(D_{w_i}F,K_{\mu}F)\xrightarrow{\cong}\mathbb{Z}^{b-2}, \text{ such that } \theta_i(\pi_{ij})=\epsilon_{j-1}, \text{ for }  2\leq j\leq b-1.
\end{equation}
$\bullet$ $\boldsymbol{\mathrm{Hom}_{A}(D_{w_{ij}}F,K_{\mu}F),\ 2\leq i<j\leq b-1.}$ Let $2\leq i<j\leq b-1$. We proceed similarly to the case  $\mathrm{Hom}_{A}(D_{v_i}F,K_{\mu}F),\ 2\leq i\leq b$. Therefore, we conclude that
\begin{equation}\label{eq3214}
\mathrm{X}_{P_{ijh}}=
\end{equation}
{\small $\begin{cases}
1^{(a)}h\otimes2\ldots\widehat{h}\ldots (i-1)i^{(2)}(i+1)\ldots(j-1)j^{(2)}(j+1)\ldots(b-1),&\text{if }2\leq h\leq i-1\\
1^{(a)}i\otimes2\ldots(j-1)j^{(2)}(j+1)\ldots(b-1),&\text{if }h=i\\
1^{(a)}h\otimes2\ldots(i-1)i^{(2)}(i+1)\ldots \widehat{h}\ldots(j-1)j^{(2)}(j+1)\ldots(b-1),&\text{if }i+1\leq h\leq j-1\\
1^{(a)}j\otimes2\ldots(i-1)i^{(2)}(i+1)\ldots(b-1),&\text{if }h=j\\
1^{(a)}h\otimes2\ldots(i-1)i^{(2)}(i+1)\ldots (j-1)j^{(2)}(j+1)\ldots\widehat{h}\ldots(b-1),&\text{if }j+1\leq h\leq b-1,
\end{cases}$}\vspace*{0,15cm}
where $P_{ijh}$ is a standard tableau of shape $\mu=(a+1,b-1)$ and content $w_{ij}=(a,1^{i-2},2,1^{j-1-i},2,1^{b-1-j})$, and we set
\begin{equation}\label{eq3215}
\pi_{ijh}:=
\end{equation}
$\begin{cases}d'_\mu\circ r_h\in\mathrm{Hom}_{A}(D_{w_{ij}}F,K_{\mu}F),\text{ if }2\leq h\leq b-1 \text{ with } h\neq i \text{ and }h\neq j\\
d'_\mu\circ r_h\circ(1^{\otimes(h-1)}\otimes\Delta \otimes1^{\otimes(b-1-h)}) \in\mathrm{Hom}_{A}(D_{w_{ij}}F,K_{\mu}F),\text{if } h=i \text{ or } h=j,
\end{cases}$\vspace*{0,15cm}\\
where $r_h\in\mathrm{Hom}_{A}(D_{w_{ij}}F,D_{\mu}F)$, for $2\leq h\leq b-1$ with $h\neq i$ and $h\neq j$,
$r_i\in\mathrm{Hom}_{A}(D_{v_{j+1}}F,D_{\mu}F)$ and $r_j\in\mathrm{Hom}_{A}(D_{v_i}F,D_{\mu}F)$, are defined similarly to the maps $\tau_j$ [see before (\ref{eq3110})], while $\Delta:D_2F\to D_1F\otimes D_1F$ is the indicated component of the diagonalization map of the Hopf algebra $DF$. We easily verify that 
\begin{equation}\label{eq3216}
\Phi_{w_{ij}}(\pi_{ijh})=d'_\mu(\mathrm{X}_{P_{ijh}}),\text{ where  } 2\leq h\leq b-1,
\end{equation}
and there exists an isomorphism of $\mathbb{Z}-$modules
\begin{equation}\label{eq3217}
\theta_{ij}:\mathrm{Hom}_{A}(D_{w_{ij}}F,K_{\mu}F)\xrightarrow{\cong}\mathbb{Z}^{b-2}, \text{ such that } \theta_{ij}(\pi_{ijh})=\epsilon_{h-1}, \text{ for }  2\leq h\leq b-1.
\end{equation}

From (\ref{eq323}), (\ref{eq325}), (\ref{eq329}), (\ref{eq3213}) and (\ref{eq3217}), we conclude that there exists an isomorphism of $\mathbb{Z}-$modules \begin{equation}\label{eq3218}
\theta:\mathrm{Hom}_{A}(P_2(\lambda),K_{\mu}F)\xrightarrow{\cong}\mathbb{Z}^s,
\end{equation} 
where\; $\theta:= \displaystyle\left(\mathop{\oplus}_{i=2}^{b-1}\psi_i\right)\oplus\left(\mathop{\oplus}_{i=2}^{b-1}\left(\theta_i\oplus\left( \mathop{\oplus}_{j=i+1}^{b-1}\theta_{{ij}}\right)\right)\right)$ and  $s=(b-2)\left[ 1+\binom{b-1}{2}\right]$.
\subsection{The presentation matrix $\boldsymbol{M}$ of $\boldsymbol{\mathrm{Ext}_A^2(K_{\lambda}F, K_{\mu}F)}$} Here, we compute the matrix from which $\mathrm{Ext}_A^2(K_{\lambda}F, K_{\mu}F)$ will be determined. We consider the homomorphism of $\mathbb{Z}-$modules
\begin{equation}\label{eq331}
f=f(a,b):=\theta\circ\left(d_2 (\lambda)\right)_*\circ\phi^{-1}:\mathbb{Z}^t\to\mathbb{Z}^s,
\end{equation}
where the maps $\phi$ and $\theta$ are given by (\ref{eq3114}) and (\ref{eq3218}), respectively, and $\left(d_2 (\lambda)\right)_*$ is described before Proposition \ref{pro32}. We set
\begin{equation}\label{eq332}
M=M(a,b):=(f:e(t),\epsilon(s))\in\mathbb{Z}^{s\times t}.
\end{equation}
We recall that $s=(b-2)\left[1+\binom{b-1}{2}\right]$ and $t=1+(b-1)^2$.

From Proposition \ref{pro32}, (\ref{eq331}) and the fact that $\phi$ and  $\theta$ are isomorphisms of $\mathbb{Z}-$modules, we conclude that
\begin{equation}\label{eq333}
\mathrm{Ext}_A^2(K_{\lambda}F,K_{\mu}F)\cong T(cokerf),
\end{equation}
where $T(cokerf)$ is the torsion submodule of the cokernel of $f$. Furthermore, it is well known that
\begin{equation}\label{eq334}
T(cokerf)\cong\mathbb{Z}_{d_1}\oplus\ldots\oplus\mathbb{Z}_{d_\nu},
\end{equation}
where $d_1|d_2|\ldots|d_\nu$ are the non-zero invariant factors of the matrix  $M$.
Hence, to determine $\mathrm{Ext}_A^2(K_{\lambda}F,K_{\mu}F)$, we will initially compute matrix $M$ and then calculate its invariant factors.

For the computation of $M$, some additional definitions will be needed. For the formulation of these definitions, we will use the maps $\phi_j$ [see (\ref{eq318}) and (\ref{eq3113})], $\psi_i$ [see (\ref{eq329})], $\theta_i$ [see (\ref{eq3213})], $\theta_{ij}$ [see (\ref{eq3217})], $i(v_j)$ [see (\ref{eq314})], $pr(u_i), pr(w_i)$ and $pr(w_{ij})$ [see (\ref{eq324})]. We set $\mu(1)=1$ and $\mu(j)=b-1$, where  $2\leq j\leq b$.

\begin{definition}\label{def34}
$i)$ The homomorphism of $\mathbb{Z}-$modules
\begin{equation*}
f(u_i,v_j):\mathbb{Z}^{\mu(j)}\to\mathbb{Z}, \text{ where } 2\leq i\leq b-1 \text{ and } 1\leq j\leq b,
\end{equation*}
is defined to be the composition
\begin{align*}
&\mathbb{Z}^{\mu(j)}\xrightarrow{\phi_j^{-1}} \mathrm{Hom}_{A}(D_{v_j}F,K_{\mu}F)\xrightarrow{i(v_j)}\mathrm{Hom}_{A}(P_1(\lambda),K_{\mu}F)\xrightarrow{(d_2(\lambda))_*}\\
&\to\mathrm{Hom}_{A}(P_2(\lambda),K_{\mu}F)\xrightarrow{pr(u_i)}\mathrm{Hom}_{A}(D_{u_i}F, K_{\mu}F)\xrightarrow{\psi_i}\mathbb{Z}.
\end{align*}
$ii)$ We set 
\begin{equation*}
A(u_i,v_j):=(f(u_i,v_j):e(\mu(j)),\epsilon(1))\in\mathbb{Z}^{1\times\mu(j)}, 
\end{equation*}
where $2\leq i\leq b-1$ and  $1\leq j\leq b$.
\end{definition}

\begin{definition}\label{def35}
$i)$ The homomorphism of $\mathbb{Z}-$modules
\begin{equation*}
f(w_i,v_j):\mathbb{Z}^{\mu(j)}\to\mathbb{Z}^{b-2}, \text{ where } 2\leq i\leq b-1 \text{ and } 1\leq j\leq b,
\end{equation*}
is defined to be the composition
\begin{align*}
&\mathbb{Z}^{\mu(j)}\xrightarrow{\phi_j^{-1}} \mathrm{Hom}_{A}(D_{v_j}F,K_{\mu}F)\xrightarrow{i(v_j)}\mathrm{Hom}_{A}(P_1(\lambda),K_{\mu}F)\xrightarrow{(d_2(\lambda))_*}\\
&\to\mathrm{Hom}_{A}(P_2(\lambda),K_{\mu}F)\xrightarrow{pr(w_i)}\mathrm{Hom}_{A}(D_{w_i}F,K_{\mu}F)\xrightarrow{\theta_i}\mathbb{Z}^{b-2}.
\end{align*}
$ii)$ We set 
\begin{equation*}
B(w_i,v_j):=(f(w_i,v_j):e(\mu(j)),\epsilon(b-2))\in\mathbb{Z}^{(b-2)\times\mu(j)}, 
\end{equation*}
where  $2\leq i\leq b-1$ and $1\leq j\leq b$.
\end{definition}

\begin{definition}\label{def36}
$i)$ The homomorphism of $\mathbb{Z}-$modules
\begin{equation*}
f(w_{ij},v_h):\mathbb{Z}^{\mu(h)}\to\mathbb{Z}^{b-2}, \text{ where } 2\leq i<j\leq b-1 \text{ and } 1\leq h\leq b,
\end{equation*}
is defined to be the composition
\begin{align*}
&\mathbb{Z}^{\mu(h)}\xrightarrow{\phi_h^{-1}} \mathrm{Hom}_{A}(D_{v_h}F,K_{\mu}F)\xrightarrow{i(v_h)}\mathrm{Hom}_{A}(P_1(\lambda),K_{\mu}F)\xrightarrow{(d_2(\lambda))_*}\\
&\to\mathrm{Hom}_{A}(P_2(\lambda),K_{\mu}F)\xrightarrow{pr(w_{ij})}\mathrm{Hom}_{A}(D_{w_{ij}}F, K_{\mu}F)\xrightarrow{\theta_{ij}}\mathbb{Z}^{b-2}.
\end{align*}
$ii)$ We set 
\begin{equation*}
C(w_{ij},v_h):=(f(w_{ij},v_h):e(\mu(h)),\epsilon(b-2))\in\mathbb{Z}^{(b-2)\times\mu(h)},
\end{equation*}
where $2\leq i<j\leq b-1$ and $1\leq h\leq b$.
\end{definition}

From (\ref{eq331}), (\ref{eq332}), (\ref{eq3114}), (\ref{eq3218}) and Definitions \ref{def34} - \ref{def36}, it follows that  matrix $M$ is of the form
\begin{equation}\label{eq335}
M=M(a,b)=\begin{bmatrix}
A(u_2,v_1) & A(u_2,v_2) & \cdots & A(u_2,v_b) \\
\vdots & \vdots & & \vdots\\
A(u_{b-1},v_1) & A(u_{b-1},v_2) & \cdots & A(u_{b-1},v_b) \\
E(2,v_1) & E(2,v_2) & \cdots & E(2,v_b)\\
\vdots & \vdots & & \vdots\\
E(b-1,v_1) & E(b-1,v_2) & \cdots & E(b-1,v_b)
\end{bmatrix}\in\mathbb{Z}^{s\times t},
\end{equation}
where $s=(b-2)\left[1+\binom{b-1}{2}\right],\; t=1+(b-1)^2$,
and for $1\leq j\leq b$, it holds  
\begin{equation*}
E(i,v_j)=\begin{bmatrix}
B(w_i,v_j) \\ C(w_{ii+1},v_j)\\\vdots\\ C(w_{ib-1},v_j) \end{bmatrix}\in \mathbb{Z}^{(b-i)(b-2)\times(b-1)},\text{ if } 2\leq i\leq b-2, \text{ and }
\end{equation*}
\begin{equation*}
E(b-1,v_j)=B(w_{b-1},v_j)\in \mathbb{Z}^{(b-2)\times(b-1)}.
\end{equation*}

We claim that for $2\leq i\leq b-1$, it holds
\begin{equation}\label{eq336} 
A(u_i,v_1)=(-1)^{i+1} [2]\in\mathbb{Z}^{1\times 1}, 
\end{equation}
\begin{equation}\label{eq337}
A(u_i,v_j)=O\in\mathbb{Z}^{1\times(b-1)},  \text{ where }2\leq j\leq b \text{ and } j\neq i+1,
\end{equation}
\begin{equation}\label{eq338}
A(u_i,v_{i+1})=\begin{bmatrix}
a+1&-1&\dots&-1&-2&-1&\dots&-1
\end{bmatrix}\in\mathbb{Z}^{1\times(b-1)},
\end{equation}
where $-2$ is located at the $i-th$ column,
\begin{equation}\label{eq339}	B(w_i,v_1)=O\in\mathbb{Z}^{(b-2)\times1}, 
\end{equation}
\begin{equation}\label{eq3310}
B(w_i,v_j)=O\in\mathbb{Z}^{(b-2)\times (b-1)},\text{ where } 2\leq j\leq b  \text{ with } j\neq i \text{ and } j\neq i+1 ,
\end{equation}
\begin{equation}\label{eq3311}
B(w_i,v_i)=(-1)^{i+1}\begin{bmatrix}
3I_{i-2} & & &\\
& 2& 1& \\
& & &3I_{(b-2)-(i-1)}
\end{bmatrix}\in\mathbb{Z}^{(b-2)\times(b-1)},
\end{equation}
\begin{equation}\label{eq3312}
B(w_i,v_{i+1})=(-1)^{i+1}\begin{bmatrix}
3I_{i-2}& & & \\
& 1& 2& \\
& & &3I_{(b-2)-(i-1)}
\end{bmatrix}\in\mathbb{Z}^{(b-2)\times(b-1)},
\end{equation}
and for  $2\leq i<j\leq b-1$, it holds
\begin{equation}\label{eq3313}
C(w_{ij},v_1)=O\in\mathbb{Z}^{(b-2)\times1}, 
\end{equation}
\begin{equation}\label{eq3314}
C(w_{ij},v_h)=O\in\mathbb{Z}^{(b-2)\times (b-1)},\text{ where }2\leq h\leq b \text{ with } h\neq i \text{ and } h\neq j+1,
\end{equation}
\begin{equation}\label{eq3315}
C(w_{ij},v_i)=(-1)^{j+1}\begin{bmatrix}
2I_{j-2}& & & \\
& 1& 1& \\
& & &2I_{(b-2)-(j-1)}
\end{bmatrix}\in\mathbb{Z}^{(b-2)\times(b-1)}, \text{ and}
\end{equation}
\begin{equation}\label{eq3316}
C(w_{ij},v_{j+1})=(-1)^{i+1}\begin{bmatrix}
2I_{i-2}& & & \\
& 1& 1& \\
& & &2I_{(b-2)-(i-1)}
\end{bmatrix}\in\mathbb{Z}^{(b-2)\times(b-1)}.
\end{equation}

To prove (\ref{eq336}) - (\ref{eq3316}), we need to study the map $d_2(\lambda):P_2(\lambda)\to P_1(\lambda)$. From the definition of $d_2(\lambda)$ ({\cite[section 2.3]{MS}}), and (\ref{eq311}), (\ref{eq312}), (\ref{eq321}) and (\ref{eq322}), it follows that 
\begin{equation}\label{eq3317}
d_2(\lambda)=\left(\mathop{\oplus}_{i=1}^{b-1}d_2(u_i)\right)\oplus\left(\mathop{\oplus}_{i=2}^{b-1}\left(d_2(w_i)\oplus\left( \mathop{\oplus}_{j=i+1}^{b-1}d_2(w_{ij})\right)\right)\right),
\end{equation}
where
\begin{align}\label{eq3318}
&d_2(u_1)=d_2(\lambda,u_1):=\Delta\otimes 1^{\otimes(b-2)}:D_{u_1}F=D(a+2,1^{b-2})\to\\
&\nonumber\to D(a+1,1^{b-1})\oplus D(a,2,1^{b-2})=D_{v_1}F\oplus D_{v_2}F,
\end{align}
where $\Delta:D(a+2)\to D(a+1,1)\oplus D(a,2)$ is the indicated component of the diagonalization map of the Hopf algebra $DF$,
\begin{align}\label{eq3319}
&\text{for } 2\leq i\leq b-1,\;
d_2(u_i)=d_2(\lambda,u_i):=\\
&\nonumber\Delta_1\otimes1^{\otimes(b-2)}\oplus(-1)^{i+1}1^{\otimes(i-1)}\otimes\Delta_i\otimes 1^{\otimes(b-1-i)}:D_{u_i}F=\\
&\nonumber D(a+1,1^{i-2},2,1^{b-1-i})\to D(a,1^{i-1},2,1^{b-1-i})\oplus D(a+1,1^{b-1})=\\
&\nonumber D_{v_{i+1}}F\oplus D_{v_1}F,
\end{align}
where $\Delta_1:D(a+1)\to D(a,1)$ and  $\Delta_i:D(2)\to D(1,1)$ are the indicated components of the diagonalization map of the Hopf algebra $DF$,
\begin{align}\label{eq3320}
&\text{for }2\leq i\leq b-1,\;
d_2(w_i)=d_2(\lambda,w_i):=\\
&\nonumber(-1)^{i+1}1^{\otimes(i-1)}\otimes\Delta_i\otimes1^{\otimes(b-1-i)}:D_{w_i}F=D(a,1^{i-2},3,1^{b-1-i})\to\\
&\nonumber\to D(a,1^{i-2},2,1^{b-i})\oplus  D(a,1^{i-1},2,1^{b-1-i})=D_{v_{i}}F\oplus D_{v_{i+1}}F,
\end{align}
where $\Delta_i:D(3)\to D(2,1)\oplus D(1,2)$ is the indicated component of the diagonalization map of the Hopf algebra $DF$, and\newpage
\begin{align}\label{eq3321}
&\text{for }2\leq i<j\leq b-1,\
d_2(w_{ij})=d_2(\lambda,w_{ij}):=\\
\nonumber&(-1)^{i+1}1^{\otimes(i-1)}\otimes \Delta_i\otimes1^{\otimes(b-1-i)}\oplus(-1)^{j+1}1^{\otimes(j-1)}\otimes\Delta_j\otimes1^{\otimes(b-1-j)}:\\
\nonumber&D_{w_{ij}}F=D(a,1^{i-2},2,1^{j-1-i},2,1^{b-1-j})\to\\
\nonumber&\to D(a,1^{j-1},2,1^{b-1-j})\oplus  D(a,1^{i-2},2,1^{b-i})=D_{v_{j+1}}F\oplus D_{v_i}F,
\end{align}
where $\Delta_i=\Delta_j: D(2)\to D(1,1)$ is the indicated component of the diagonalization map of the Hopf algebra $DF$.

We recall that $\overline{\sigma_1}$ and $\overline{\sigma_{kl}}$ are given by (\ref{eq3114}), while $pr(u_i)$, $pr(w_i)$ and $pr(w_{ij})$ are given by (\ref{eq324}). Furthermore,
\begin{equation}\label{eq3322}
i(v_\alpha,v_\beta):D_{v_\alpha}F\oplus D_{v_\beta}F\to P_1(\lambda) \text{ and } pr(v_k):P_1(\lambda)\to D_{v_k}F 
\end{equation}
are the embedding and the projection, respectively, induced by (\ref{eq312}) .  Clearly,
\begin{equation}\label{eq3323}
pr(v_k)\circ i(v_\alpha,v_\beta)=\begin{cases} pr(\overline{v_\alpha},v_\beta),&\text{ if } k=\alpha\\ 
pr(v_\alpha,\overline{v_\beta}), &\text{ if } k=\beta\\
0,&\text{ if }  k\neq \alpha \text{ and } k\neq \beta,
\end{cases}
\end{equation}
where
\begin{equation}\label{eq3324}
pr(\overline{v_\alpha},v_\beta): D_{v_\alpha}F\oplus D_{v_\beta}F\to D_{v_\alpha}F \text{ and } pr(v_\alpha,\overline{v_\beta}): D_{v_\alpha}F\oplus D_{v_\beta}F\to  D_{v_\beta}F
\end{equation}
are the obvious projections. Finally, we need the subsequent

\begin{proposition}\label{pro37}
The following relations hold:
\begin{enumerate}
\item$[pr(u_i)\circ(d_2(\lambda))_*](\overline{\sigma_1})=\sigma_1\circ pr(v_{i+1},\overline{v_1})\circ d_2(u_i)$, where $1\leq i\leq b-1$
\item$[pr(w_i)\circ(d_2(\lambda))_*](\overline{\sigma_1})=0$, where $2\leq i\leq b-1$
\item$[pr(w_{ij})\circ(d_2(\lambda))_*](\overline{\sigma_1})=0$, where $2\leq i<j\leq b-1$
\item$[pr(u_i)\circ(d_2(\lambda))_*](\overline{\sigma_{kl}})=\sigma_{kl}\circ pr(v_k)\circ i(v_{i+1},v_1)\circ d_2(u_i)$, where $1\leq i\leq b-1$ and $2\leq k,l \leq b$
\item$[pr(w_i)\circ(d_2(\lambda))_*](\overline{\sigma_{kl}})=\sigma_{kl}\circ pr(v_k)\circ i(v_{i},v_{i+1})\circ d_2(w_i)$, where $2\leq i\leq b-1$ and $2\leq k,l \leq b$
\item$[pr(w_{ij})\circ(d_2(\lambda))_*](\overline{\sigma_{kl}})=\sigma_{kl}\circ pr(v_k)\circ i(v_{j+1},v_i)\circ d_2 (w_{ij})$, where $2\leq i<j\leq b-1$ and $2\leq k,l \leq b$.
\end{enumerate}
\end{proposition}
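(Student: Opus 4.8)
The plan is to prove all six identities by directly unwinding the definitions of the maps involved, reducing each statement to the block structure of $d_2(\la)$ recorded in (\ref{eq3317})--(\ref{eq3321}) together with the elementary composition rule (\ref{eq3323}). Three observations drive everything. First, the contravariant functor sends $d_2(\la)$ to precomposition, so that $(d_2(\la))_*(g)=g\circ d_2(\la)$ for every $g\in\mathrm{Hom}_{A}(P_1(\la),K_{\m}F)$. Second, under the identification (\ref{eq323}) each projection $pr(u_i)$, $pr(w_i)$, $pr(w_{ij})$ of (\ref{eq324}) is precisely precomposition with the canonical inclusion of the corresponding summand into $P_2(\la)$. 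Third, by (\ref{eq3114}) the basis elements factor as $\overline{\sigma_1}=\sigma_1\circ pr(v_1)$ and $\overline{\sigma_{kl}}=\sigma_{kl}\circ pr(v_k)$.

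First I would combine the first two observations with the direct-sum expression (\ref{eq3317}). Restricting $d_2(\la)$ to the summand $D_{u_i}F\subseteq P_2(\la)$ gives $i(v_{i+1},v_1)\circ d_2(u_i)$ by (\ref{eq3319}); the restriction to $D_{w_i}F$ gives $i(v_i,v_{i+1})\circ d_2(w_i)$ by (\ref{eq3320}); and the restriction to $D_{w_{ij}}F$ gives $i(v_{j+1},v_i)\circ d_2(w_{ij})$ by (\ref{eq3321}). Pushing $\overline{\sigma_{kl}}=\sigma_{kl}\circ pr(v_k)$ through $(d_2(\la))_*$ and then applying the relevant projection therefore produces $\sigma_{kl}\circ pr(v_k)\circ i(v_{i+1},v_1)\circ d_2(u_i)$, $\sigma_{kl}\circ pr(v_k)\circ i(v_i,v_{i+1})\circ d_2(w_i)$ and $\sigma_{kl}\circ pr(v_k)\circ i(v_{j+1},v_i)\circ d_2(w_{ij})$, which are exactly the right-hand sides of (4), (5) and (6). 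Those three identities thus fall out with no further computation.

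For the $\overline{\sigma_1}$ statements (1)--(3) the same substitution leaves $\sigma_1\circ pr(v_1)\circ i(v_\al,v_\be)\circ d_2(\cdot)$, and here I would invoke (\ref{eq3323}) to evaluate $pr(v_1)\circ i(v_\al,v_\be)$. In (1), with $\al=i+1$ and $\be=1$ for $2\le i\le b-1$, the value $1$ occurs as the second argument, so (\ref{eq3323}) returns $pr(v_{i+1},\overline{v_1})$ and yields the stated formula; the boundary case $i=1$ is handled the same way using (\ref{eq3318}) in place of (\ref{eq3319}). In (2) and (3) the indices satisfy $2\le i<j\le b-1$, so $1$ appears neither as $\al$ nor as $\be$ in $i(v_i,v_{i+1})$ respectively $i(v_{j+1},v_i)$; the third branch of (\ref{eq3323}) then gives $pr(v_1)\circ i(v_\al,v_\be)=0$, and the entire composite vanishes.

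I expect no conceptual obstacle, since the argument is pure bookkeeping. The one point demanding care is the index shift built into the resolution -- that $d_2(u_i)$ lands in $D_{v_{i+1}}F\oplus D_{v_1}F$ rather than in summands indexed by $i$ -- together with keeping the order of the two target summands consistent with the convention of (\ref{eq3324}) when applying (\ref{eq3323}), and treating separately the case $i=1$ of (1) and (4), which is governed by (\ref{eq3318}) and in which $1$ occurs as the first rather than the second argument. Once these conventions are pinned down, each of the six identities is a single line.
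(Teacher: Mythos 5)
Your proof is correct. The paper states Proposition \ref{pro37} without any proof, treating it as routine, and your argument --- identifying $(d_2(\lambda))_*$ with precomposition by $d_2(\lambda)$, identifying the projections $pr(u_i)$, $pr(w_i)$, $pr(w_{ij})$ with precomposition by the inclusions of the corresponding summands of $P_2(\lambda)$, writing $\overline{\sigma_1}=\sigma_1\circ pr(v_1)$ and $\overline{\sigma_{kl}}=\sigma_{kl}\circ pr(v_k)$, and then reading off the block decomposition (\ref{eq3317})--(\ref{eq3321}) and invoking (\ref{eq3323}) to get the stated right-hand sides in (1) and the vanishing in (2) and (3) --- is exactly the bookkeeping the paper leaves implicit, including the correct separate treatment of $i=1$, where $d_2(u_1)$ lands in $D_{v_1}F\oplus D_{v_2}F$ with $v_1$ in the first rather than the second slot.
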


\begin{proof}[\textbf{Proof of $\boldsymbol{\mathrm{(\ref{eq336})}}$}]From Definition \ref{def34} $i)$ for $j$=1, (\ref{eq318}), (\ref{eq3114}) and Proposition \ref{pro37}(1), it follows that 
\begin{equation}\label{eq1}
f(u_i,v_1)(1)=\psi_i(\sigma_1\circ pr(v_{i+1},\overline{v_1})\circ d_2(u_i)),\text{ where } 2\leq i\leq b-1.\tag{1}
\end{equation}
From Proposition \ref{pro22} for $\alpha=u_i$,  (\ref{eq321}), (\ref{eq3319}), (\ref{eq3324}), (\ref{eq315}), (\ref{eq326}) and (\ref{eq328}), we conclude that
\begin{equation*}
\Phi_{u_i}(\sigma_{1}\circ pr(v_{i+1},\overline{v_1})\circ d_2(u_i))=\Phi_{u_i}((-1)^{i+1}2\pi_i),\text{ where } 2\leq i\leq b-1,
\end{equation*}
and the coefficient 2 comes from the multiplication in the divided power algebra $DF$. The map  $\Phi_{u_i}$ is 1-1, thus
\begin{equation}\label{eq2}
\sigma_1\circ pr(v_{i+1},\overline{v_1})\circ d_2(u_i)=(-1)^{i+1}2\pi_i,\text{ where } 2\leq i\leq b-1.\tag{2}
\end{equation}
From (\ref{eq1}), (\ref{eq2}) and (\ref{eq329}), it follows that
\begin{equation*}
f(u_i,v_1)(1)=(-1)^{i+1}2,\text{ where }  2\leq i\leq b-1.
\end{equation*}
The above, in combination with Definition \ref{def34} $ii)$ for $j=1$, yields (\ref{eq336}).
\end{proof}

\begin{proof}[\textbf{Proof of $\boldsymbol{\mathrm{(\ref{eq337})}}$}]From Definition \ref{def34} $i)$, (\ref{eq3113}), (\ref{eq3114}), Proposition \ref{pro37}(4),  (\ref{eq3323}) and (\ref{eq329}), it follows that 
\begin{equation*}
f(u_i,v_j)(e_l)=0\in\mathbb{Z},
\end{equation*}
where  $2\leq i\leq b-1,\; 2\leq j\leq b$  with  $j\neq i+1$ and  $1\leq l\leq b-1$.  The above, in combination with Definition \ref{def34} $ii)$, yields (\ref{eq337}).
\end{proof}

\begin{proof}[\textbf{Proof of $\boldsymbol{\mathrm{(\ref{eq338})}}$}]
From Definition \ref{def34} $i)$ for $j=i+1$,  (\ref{eq3113}), (\ref{eq3114}), Proposition \ref{pro37}(4) and (\ref{eq3323}), it follows that
\begin{equation}\label{eq3}
f(u_i,v_{i+1})(e_l)=\psi_i(\sigma_{i+1l+1}\circ pr(\overline{v_{i+1}},v_1)\circ d_2(u_i)),\tag{3}
\end{equation} 
where  $2\leq i\leq b-1$  and  $1\leq l\leq b-1$. From Proposition \ref{pro22} for $\alpha=u_i$,  (\ref{eq321}), (\ref{eq3319})  and (\ref{eq3324}), we conclude that
\begin{align}\label{eq4}
&\Phi_{u_i}(\sigma_{i+1l+1}\circ pr(\overline{v_{i+1}},v_1)\circ d_2(u_i))=
\tag{4}\\
\nonumber&=\sigma_{i+1l+1}(1^{(a)}\otimes1\otimes2\otimes\ldots\otimes(i-1)\otimes i^{(2)}\otimes (i+1)\otimes\ldots\otimes(b-1)),
\end{align}
where  $2\leq i\leq b-1$ and $ 1\leq l\leq b-1$.\\ 
$\bullet$ Let $l$=1 : From (\ref{eq4}), (\ref{eq3110}), (\ref{eq326}) and (\ref{eq328}), it follows that 
\begin{equation*}
\Phi_{u_i}(\sigma_{i+12}\circ pr(\overline{v_{i+1}},v_1)\circ d_2(u_i))=
\Phi_{u_i}((a+1)\pi_{i}),\text{ where }  2\leq i\leq b-1,	
\end{equation*}
and the coefficient $a+1$ comes from the multiplication in the divided power algebra $DF$. The map $\Phi_{u_i}$ is 1-1, thus \begin{equation}\label{eq5}
\sigma_{i+12}\circ pr(\overline{v_{i+1}},v_1) \circ d_2(u_i)=(a+1)\pi_{i},\text{ where }  2\leq i\leq b-1.\tag{5}
\end{equation}
$\bullet$ Let $l\neq 1$ : From (\ref{eq4}) and (\ref{eq3110}), it follows that 
\begin{equation*}
\Phi_{u_i}(\sigma_{i+1l+1}\circ pr(\overline{v_{i+1}},v_1)\circ d_2 (u_i))=d'_\mu(\mathrm{X}_{Q_{il}}), \text{ where } 2\leq i,l\leq b-1, \text{and}
\end{equation*}
\begin{equation*}
\mathrm{X}_{Q_{il}}=\begin{cases}
1^{(a)}l\otimes12\dots\widehat{l}\dots(i-1)i^{(2)}(i+1)\dots(b-1),&\text{ if } 2\leq l\leq i-1\\ 
1^{(a)}i\otimes12\dots(b-1),&\text{ if } l=i\\
1^{(a)}l\otimes12\dots(i-1)i^{(2)}(i+1)\dots\widehat{l}\dots(b-1),&\text{ if } i+1\leq l\leq b-1.
\end{cases}
\end{equation*}
In each case, $Q_{il}$ is a non-standard  tableau. Therefore, we apply the straightening law  {\cite[pp.235 - 236]{ABW}}, and deduce that for $2\leq i\leq b-1$, it holds 
\begin{equation*}
d'_\mu(\mathrm{X}_{Q_{il}})=\begin{cases}
-d'_\mu(\mathrm{X}_{P_i}), &\text{ if } 2\leq l\leq b-1 \text{ and } l\neq i\\
-2d'_\mu(\mathrm{X}_{P_i}), &\text{ if } l=i,
\end{cases}
\end{equation*}
where $\mathrm{X}_{P_i}$ is given by (\ref{eq326}). From the above, (\ref{eq328}) and the fact that $\Phi_{u_i}$
is an isomorphism of $\mathbb{Z}-$modules, we conclude that
\begin{align}\label{eq6}
&\text{for } 2\leq i\leq b-1, \text{we have}\tag{6}\\
&\nonumber\sigma_{i+1l+1}\circ pr(\overline{v_{i+1}},v_1)\circ d_2(u_i)=\begin{cases}
-\pi_i,&\text{if } 2\leq l\leq b-1 \text{ and } l\neq i\\
-2\pi_{i},&\text{if } l=i,\end{cases}
\end{align}
where $\pi_i$ is given by (\ref{eq327}). From (\ref{eq3}), (\ref{eq5}), (\ref{eq6}) and (\ref{eq329}), it follows that
\begin{equation*}
\text{for } 2\leq i\leq b-1, \; f(u_i,v_{i+1})(e_l)=\begin{cases}
a+1,&\text{ if } l=1\\
-1,&\text{ if } 2\leq l\leq b-1 \text{ and } l\neq i\\
-2,&\text{ if } l=i.\end{cases}
\end{equation*}
The above, in combination with Definition (\ref{def34}) $ii)$ for $j=i+1$, yields (\ref{eq338}).
\end{proof}

\begin{proof}[\textbf{Proof of $\boldsymbol{\mathrm{(\ref{eq339})}}$}]Definition \ref{def35} $i)$ for $j=1$, (\ref{eq318}), (\ref{eq3114}), Proposition \ref{pro37}(2) and  (\ref{eq3213}), imply that $f(w_i,v_1)(1)=0\in\mathbb{Z}^{b-2},\text{ where } 2\leq i\leq b-1$. The above, in combination with Definition \ref{def35} $ii)$ for $j=1$, yields (\ref{eq339}).
\end{proof}

\begin{proof}[\textbf{Proof of $\boldsymbol{\mathrm{(\ref{eq3310})}}$ and $\boldsymbol{\mathrm{(\ref{eq3314})}}$}]Similar to the proof of (\ref{eq337}).
\end{proof}

\begin{proof}[\textbf{Proof of $\boldsymbol{\mathrm{(\ref{eq3311})}}$}]From Definition \ref{def35} $i)$ for $j=i$,  (\ref{eq3113}), (\ref{eq3114}), Proposition \ref{pro37}(5) and (\ref{eq3323}), it follows that
\begin{equation}\label{eq7}
f(w_i,v_i)(e_l)=\theta_i(\sigma_{il+1}\circ pr(\overline{v_i},v_{i+1})\circ d_2(w_i)),\tag{7}
\end{equation}
where $2\leq i\leq b-1$ and  $1\leq l\leq b-1$. From Proposition \ref{pro22} for $\alpha=w_i$,  (\ref{eq321}), (\ref{eq3320})  and  (\ref{eq3324}),  we conclude that 
\begin{align*}
&\Phi_{w_i}(\sigma_{il+1}\circ pr(\overline{v_i},v_{i+1})\circ d_2(w_i))=\\
&=(-1)^{i+1}\sigma_{il+1}(1^{(a)}\otimes2\otimes\ldots\otimes(i-1)\otimes i^{(2)}\otimes i\otimes (i+1)\otimes\ldots\otimes(b-1)),
\end{align*}	
where $2\leq i\leq b-1$ and $1\leq l\leq b-1$.  Considering also (\ref{eq3110}), (\ref{eq3210}),
(\ref{eq3212}) and the fact that $\Phi_{w_i}$ is an isomorphism of $\mathbb{Z}-$modules, it follows that for $2\leq i\leq b-1$,
\begin{equation*}
\sigma_{il+1}\circ pr(\overline{v_i},v_{i+1})
\circ d_2(w_i)=\begin{cases} (-1)^{i+1}3{\pi_{il+1}},&\text{ if } 1\leq l\leq i-2\\
(-1)^{i+1}2{\pi_{ii}},&\text{ if }  l=i-1\\
(-1)^{i+1}{\pi_{ii}},&\text{ if } l=i\\
(-1)^{i+1}3{\pi_{il}},&\text{ if } i+1\leq l\leq b-1,\end{cases}
\end{equation*}
where $\pi_{ij}$ is given by (\ref{eq3211}). This, together with (\ref{eq7}) and (\ref{eq3213}), implies that for $2\leq i\leq b-1$, 
\begin{equation*}
f(w_i,v_i)(e_l)=\begin{cases} (-1)^{i+1}3{\epsilon_l},&\text{ if } 1\leq l\leq i-2\\
(-1)^{i+1}2{\epsilon_l},&\text{ if } l=i-1\\
(-1)^{i+1}{\epsilon_{l-1}},&\text{ if } l=i\\
(-1)^{i+1}3{\epsilon_{l-1}},&\text{ if } i+1\leq l\leq b-1.\end{cases}
\end{equation*}
The above, in combination with Definition \ref{def35} $ii)$ for $j=i$, yields (\ref{eq3311}).
\end{proof}

\begin{proof}[\textbf{Proof of $\boldsymbol{\mathrm{(\ref{eq3312}),(\ref{eq3315})}}$ and $\boldsymbol{\mathrm{(\ref{eq3316})}}$}] Similar to the proof of (\ref{eq3311}).
\end{proof}

\begin{proof}[\textbf{Proof of $\boldsymbol{\mathrm{(\ref{eq3313})}}$}]
Similar to the proof of (\ref{eq339}).
\end{proof}

\section{The invariant factors of matrix $M$ for $b\geq7$}In this section, we determine the invariant factors of the matrix $M=M(a,b)$, where $b\geq 7$ and $a+1>b-1$, by performing row and column operations. We denote by  $D_n=D_n(a,b)$ the $n-th$ operation applied to $M$ and $M_n=M_n(a,b):=D_nM_{n-1}$, where $M_0:=M$. From (\ref{eq335}) we obtain the following

\begin{proposition}\label{pro41}
The matrix $M_n$  is of the form
\begin{equation*}
M_n=\begin{bmatrix}
A_n(u_2,v_1) & A_n(u_2,v_2) & \cdots & A_n(u_2,v_b)\\
\vdots & \vdots & & \vdots\\
A_n(u_{b-1},v_1) & A_n(u_{b-1},v_2) & \cdots & A_n(u_{b-1},v_b)\\
E_n(2,v_1) & E_n(2,v_2) & \cdots & E_n(2,v_b)\\
\vdots & \vdots & & \vdots\\
E_n(b-1,v_1) & E_n(b-1,v_2) & \cdots & E_n(b-1,v_b)
\end{bmatrix}, 
\end{equation*} 
where 
\begin{equation}\label{eq41a}
A_n(u_i,v_j):=D_nA_{n-1}(u_i,v_j), \text{where } A_0(u_i,v_j):= A(u_i,v_j), \;and\tag{4.1.a}
\end{equation}
\begin{equation*}
E_n(i,v_j):=D_nE_{n-1}(i,v_j),
\text{where } E_0(i,v_j):=E(i,v_j),
\end{equation*}
consequently
\begin{equation*}
E_n(i,v_j)=\begin{bmatrix}
B_n(w_i,v_j)\\
C_n(w_{ii+1},v_j)\\
\vdots\\
C_n(w_{ib-1},v_j)
\end{bmatrix}, 
\text{ if } i\neq b-1, \text{and } E_n(b-1,v_j)=B_n(w_{b-1},v_j),
\end{equation*}
where
\begin{equation}\label{eq41b}
B_n(w_i,v_j):=D_nB_{n-1}(w_i,v_j), \text{where } B_0(w_i,v_j):=B(w_i,v_j),\text{and}\tag{4.1.b}
\end{equation}
\begin{equation}\label{eq41c}
C_n(w_{ij},v_h):=D_nC_{n-1}(w_{ij},v_h), \text{where } C_0(w_{ij},v_h):=
C(w_{ij},v_h).\tag{4.1.c}
\end{equation}
\end{proposition}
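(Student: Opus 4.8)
The plan is to prove Proposition \ref{pro41} by induction on the number $n$ of elementary operations performed, treating the statement as the bookkeeping assertion that the coarse block decomposition of $M$ is preserved under every operation $D_n$ and that the individual blocks evolve according to the recursions \ref{eq41a}, \ref{eq41b} and \ref{eq41c}. The decomposition in question is the one inherited from \ref{eq335}: the rows split into horizontal strips indexed by the labels $u_2,\dots,u_{b-1}$ and by $w_i,w_{ij}$, and the columns split into vertical strips indexed by $v_1,\dots,v_b$.

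For the base case $n=0$ I would simply observe that $M_0=M$, so that the displayed block form collapses to \ref{eq335}, and that the initial block data $A_0(u_i,v_j)=A(u_i,v_j)$, $B_0(w_i,v_j)=B(w_i,v_j)$ and $C_0(w_{ij},v_h)=C(w_{ij},v_h)$ are exactly those recorded in \ref{eq41a}--\ref{eq41c}. Hence the case $n=0$ is nothing more than a restatement of \ref{eq335}.

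For the inductive step I would assume $M_{n-1}$ has the asserted block form and then pass to $M_n=D_nM_{n-1}$. The crux is that $D_n$ is compatible with both the horizontal partition (into the $u_i$- and the $w_i,w_{ij}$-strips) and the vertical partition (into the $v_j$-strips); granting this, the operation can be read off block by block, so that the block occupying the $(u_i,v_j)$ position of $M_n$ is precisely $D_n$ applied to the corresponding block of $M_{n-1}$, which is by definition $A_n(u_i,v_j)$ in \ref{eq41a}. The identical argument applied to the $E$-strips, and to their further internal subdivision into the $B$- and $C$-blocks, produces $B_n$ and $C_n$ through \ref{eq41b} and \ref{eq41c}. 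Reassembling the transformed blocks then recovers $M_n$ in the claimed shape, closing the induction.

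I expect the induction itself to be purely formal; the real content, and the step that demands care, is the compatibility of each $D_n$ with the block boundaries. Since the column strip attached to $v_1$ has width $1$ while each strip attached to $v_j$ with $j\ge 2$ has width $b-1$, and since the row strips $E(i,\cdot)$ have $i$-dependent heights $(b-i)(b-2)$, one cannot allow arbitrary entrywise row or column moves: each $D_n$ must be a genuine block operation — a rescaling, a permutation, or the addition of an integer multiple of one block-strip to another — acting uniformly across the transverse blocks. The verification therefore reduces to checking, once the explicit list of operations $D_n$ used in this section is fixed, that every operation on the list is of this uniform block type, so that its effect on a single block is well defined and agrees with \ref{eq41a}--\ref{eq41c}.
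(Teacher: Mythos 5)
The statement you are proving is pure bookkeeping, and the paper in fact offers no proof at all: Proposition \ref{pro41} is presented as an immediate consequence of (\ref{eq335}), with (\ref{eq41a})--(\ref{eq41c}) serving as recursive \emph{definitions} of the blocks of $M_n$. Your induction scaffold (base case $n=0$ is exactly (\ref{eq335}); inductive step tracks a single operation $D_n$) is a reasonable way to make that explicit, and your base case is correct.

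The problem lies in what you single out as the crucial step. You claim that the block recursion can only be maintained if every $D_n$ is a \emph{uniform} block operation --- a rescaling, a permutation, or the addition of an integer multiple of one entire block-strip to another, acting the same way across the whole strip --- and that the proof reduces to verifying each $D_n$ is of this type. That verification would fail, because the paper's operations are not of this type: $D_1(v_2,\tau)$ in Definition \ref{def43} adds to the single column $\Sigma_\tau\left(M(v_2)\right)$ the $\tau$-th columns of the strips $M(v_3),\dots,M(v_\tau)$, so both the coefficients and the set of participating strips vary with $\tau$; $D_2(w_i,l)$ in Definition \ref{def49} adds sums over $h>i$ to the rows with $l\leq i-2$ but sums over $h<i$ to the rows with $l\geq i$; and $D_4$ in Definition \ref{def416} uses a different multiple of a different row $\Gamma(i,l)$ for each pair $(i,l)$. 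So executed literally, your program stalls at its own checkpoint. Moreover, no uniformity is needed. The correct (and essentially trivial) observation is that \emph{any} elementary row operation on the full matrix automatically acts independently on each vertical strip --- the segment of the modified row lying in the strip of $v_j$ is altered only by segments of other rows lying in that same strip --- and dually any elementary column operation acts independently on each horizontal strip. This is what makes ``$D_n$ applied to a block,'' i.e.\ the right-hand sides of (\ref{eq41a})--(\ref{eq41c}), well defined, with the understanding that a row operation may combine blocks from different row strips within one column strip (e.g.\ $B_2(w_i,v_j)$ depends on the $B_1(w_h,v_j)$ for $h\neq i$), and symmetrically for column operations. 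Your appeal to the unequal strip widths ($1$ for $v_1$ versus $b-1$ for $v_j$, $j\geq 2$) is likewise a red herring: widths would constrain only the addition of one whole strip to another, an operation the paper never performs; the only operations that genuinely break this labelled bookkeeping are interchanges across strip boundaries, which is why the final reshuffling into $M_{11}$ is handled outside the framework of Proposition \ref{pro41}.
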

Now, let
\begin{equation*}
A_n(u_i):=\left[A_n(u_i,v_1)\quad A_n(u_i,v_2)\quad\ldots\quad A_n(u_i,v_b)\right],\text{where } 2\leq i\leq b-1,
\end{equation*}
be the block of the matrix $M_n$ corresponding to $u_i$. Similarly, we define $B_n(w_i)$, where $2\leq i\leq b-1$, and $C_n(w_{ij})$, where $2\leq i<j\leq b-1$. In addition, 
\begin{equation*}
M_n(v_j):=\left[A_n(u_2,v_j)\quad\ldots\quad A_n(u_{b-1},v_j)\quad E_n(2,v_j)\quad\ldots\quad E_n(b-1,v_j)\right]^{tr}, 
\end{equation*}
where $1\leq j\leq b$, is the block of $v_j$ in the  matrix $M_n$.

We consider matrix $M$ and focus on the block $B(w_i)$, \text{where }$2\leq i\leq b-1$, given by (\ref{eq339}) - (\ref{eq3312}).
\begin{remark}\label{rem42}
We observe that	for $2\leq i\leq b-1$ and $1\leq l\leq b-2$ with $l\neq i-1$, $\Gamma_lB(w_i)$ contains exactly two non-zero entries, each equal to $(-1)^{i+1}3$, located 
\begin{itemize}
\item if $1\leq l\leq i-2$, at $\Sigma_l(M(v_i))$ and $\Sigma_l(M(v_{i+1}))$, respectively
\item if $i\leq l\leq b-2$, at $\Sigma_{l+1}(M(v_i))$ and $\Sigma_{l+1}(M(v_{i+1}))$, respectively. 
\end{itemize}
\end{remark} 

Our goal is that for $2\leq i\leq b-1$ and $1\leq l\leq b-2$ with $l\neq i-1$, the $l-th$ row of the block corresponding to $w_i$ to contain a unique non-zero entry equal to $(-1)^{i+1}3$, located, if $1\leq l\leq i-2$, at the $l-th$ column of the block of $v_i$, and if $i\leq l\leq b-2$,  at the $(l+1)-th$ column of the block of  $v_{i+1}$.
This will be achieved by applying operations $D_1$ and $D_2$ to matrix $M$.
\subsection{Operation $\boldsymbol{D_1}$}We focus on the block $B(w_i)$, where $i=2$ or $i=b-1$, given by (\ref{eq339}) - (\ref{eq3312}). We will apply the appropriate column operations to matrix $M$ to vanish $-3$  and $(-1)^{b}3$, located at $B(w_2,v_2)$ and $B(w_{b-1},v_b)$, respectively.
\begin{definition}\label{def43}
We define operation $D_1$ as
\begin{equation*}
D_1:=\{D_1(v_2,\tau),\; D_1(v_b,r):\; 3\leq\tau\leq b-1,\; 1\leq r\leq b-3\},
\end{equation*}
where
\begin{align*}
&D_1(v_2,\tau):\Sigma_\tau\left(M(v_2)\right) +\sum_{j=3}^{\tau}(-1)^{j}\Sigma_\tau\left(M(v_j)\right),\;  3\leq\tau\leq b-1, \text{ and }\\
&D_1(v_b,r):\Sigma_r\left(M(v_b)\right) +\sum_{j=r+2}^{b-1}(-1)^{j+b}\Sigma_r\left(M(v_j)\right), 1\leq r\leq b-3.
\end{align*}
\end{definition}
\begin{remark}\label{rem44}
From Definition \ref{def43}, we conclude that applying operation $D_1$ to matrix $M$ will cause changes only in the blocks of $v_2$ and $v_b$.
\end{remark}
Now, we obtain matrix $M_1:=D_1M$, given by  Proposition \ref{pro41} for $n=1$. From (\ref{eq41a}) for $n=1$ and Remark \ref{rem44}, it follows that
\begin{equation}\label{eq411}
A_1(u_i,v_j)=A(u_i,v_j),\text{ where } 2\leq i\leq b-1 \text{ and } j=1 \text{ or }3\leq j\leq b-1,
\end{equation}
and $A(u_i,v_j)$ is given by (\ref{eq336}), if $j$=1, by (\ref{eq337}), if $3\leq j\leq b-1$ with $j\neq i+1$, and by (\ref{eq338}), if $j=i+1\neq b$.
 
From (\ref{eq41a}) for $n=1$ and Definition \ref{def43}, we have that for $2\leq i\leq b-1$,
\begin{equation*}
\Sigma_\tau A_1(u_i, v_2)=\begin{cases}
\Sigma_\tau A(u_i,v_2), \text{ if } \tau=1 \text{ or } \tau=2\\\\
\Sigma_\tau A(u_i,v_2)+
\sum\limits_{j=3}^{\tau}(-1)^{j}\Sigma_\tau A(u_i,v_j), \text { if } 3\leq\tau\leq b-1\end{cases}
\end{equation*}
and
\begin{equation*}
\Sigma_rA_1(u_i,v_b)=\begin{cases}
\Sigma_rA(u_i,v_b)+\sum\limits_{j=r+2}^{b-1} (-1)^{j+b}\Sigma_rA(u_i,v_j), \text{ if } 1\leq r\leq b-3\\\\
\Sigma_rA(u_i,v_b), \text { if } r=b-2 \text{ or } r=b-1.\end{cases}
\end{equation*}
Considering (\ref{eq337}) and (\ref{eq338}) as well, we obtain the following relations:
\begin{equation}\label{eq412}
A_1(u_i,v_2)=(-1)^i\left[O_{1\times i}
\quad1\quad\dots\quad1\right]\in\mathbb{Z}^{1\times(b-1)}, \text{ where }  2\leq i\leq b-1,
\end{equation} 
\begin{equation}\label{eq413}
A_1(u_i,v_b)=(-1)^{i+1+b}\left[a+1\quad-1\quad\dots\quad-1\quad O_{1\times(b-i)}\right]\in\mathbb{Z}^{1\times (b-1)}, 
\end{equation} 
where  $2\leq i\leq b-2$, and
\begin{equation}\label{eq414}
A_1(u_{b-1},v_b)=A(u_{b-1},v_b),\text{ where } A(u_{b-1},v_b) \text{ is given by }(\ref{eq338}).
\end{equation}

Similarly, from (\ref{eq41b}) and (\ref{eq41c}) for $n=1$, Remark \ref{rem44}, Definition \ref{def43} , (\ref{eq3310})- (\ref{eq3312}) and (\ref{eq3314}) - (\ref{eq3316}), we obtain the following relations :
\begin{equation} \label{eq415}
B_1(w_i,v_j)=B(w_i,v_j),\text{ where } 2\leq i\leq b-1\text{ and }j=1\text{ or }3\leq j\leq b-1,
\end{equation} 
and $B(w_i,v_j)$ is given by (\ref{eq339}), if $j$=1, by (\ref{eq3310}), if $3\leq j\leq b-1$ with $j\neq i$ and $j\neq i+1$, by (\ref{eq3311}), if $j=i\neq2$, and by (\ref{eq3312}), if $j=i+1\neq b$,
\begin{equation}\label{eq416}
B_1(w_2,v_2)=-2E_1-E_{1,2}\in\mathbb{Z}^{(b-2)\times (b-1)},
\end{equation}
\begin{equation}\label{eq417}
B_1(w_i,v_2)=-E_{i-1,i}\in\mathbb{Z}^{(b-2)\times(b-1)},\text{ where } 3\leq i\leq b-1,
\end{equation}
\begin{equation}\label{eq418}
B_1(w_i,v_b)=(-1)^bE_{i-1}\in\mathbb{Z}^{(b-2)\times(b-1)},\text{ where }2\leq i\leq b-2, 
\end{equation}
\begin{equation}\label{eq419}
B_1(w_{b-1},v_b)=(-1)^b(E_{b-2}+2E_{b-2,b-1})\in\mathbb{Z}^{(b-2)\times(b-1)},
\end{equation}
\begin{equation}\label{eq4110}
C_1(w_{ij},v_h)=C(w_{ij},v_h),\text{where } 2\leq i<j\leq b-1 \text{ and }h=1\text{ or }3\leq h\leq b-1,
\end{equation}
and $C(w_{ij},v_h)$ is given by (\ref{eq3313}), if $h$=1, by (\ref{eq3314}), if $3\leq h\leq b-1$ with $h\neq i$ and $h\neq j+1$, by (\ref{eq3315}), if $h=i\neq2$, and by (\ref{eq3316}), if $h=j+1\neq b$,
\begin{equation}\label{eq4111}
C_1(w_{2j},v_2)=(-1)^{j+1}\begin{bmatrix}
2I_{j-2}& & &\\
&1&1 &\\
& & & O_{(b-2)-(j-1)}
\end{bmatrix},
\text{ where } 2<j\leq b-1,
\end{equation}
\begin{equation}\label{eq4112}
C_1(w_{ij},v_2)=(-1)^{i+j+1}\begin{bmatrix}
O_{i-1}& & &\\
&2I_{(j-2)-(i-1)}& &\\
& & 1&1 &\\
& & & &O_{(b-2)-(j-1)} 
\end{bmatrix},
\end{equation}
where  $3\leq i<j\leq b-1$,
\begin{equation}\label{eq4113}
C_1(w_{ij},v_b)=(-1)^{i+j+b}\begin{bmatrix}
O_{i-2}& & & &\\
& 1& 1& &\\
&  &  & 2I_{(j-2)-(i-1)} &\\
& & & & O_{(b-2)-(j-2)}
\end{bmatrix},
\end{equation}
where  $2\leq i<j\leq b-2$, and
\begin{equation}\label{eq4114}
C_1(w_{ib-1},v_b)=(-1)^{i+1}\begin{bmatrix}
O_{i-2}& & &\\
& 1& 1&\\
&  &  & 2I_{(b-2)-(i-1)}
\end{bmatrix},
\text{ where } 2\leq i<b-1.
\end{equation}
\textbf{Results of $\boldsymbol{D_1}$.} In matrix $M_1$, (\ref{eq415}) - (\ref{eq419}) lead to the following
\begin{corollary}\label{cor45}
For $i=2$ or $i=b-1$ and $1\leq l\leq b-2$ with $l\neq i-1$, $\Gamma_lB_1(w_i)$ contains a unique non-zero entry equal to $(-1)^{i+1}3$, located, if  $i=2$, at  $\Sigma_{l+1}M_1(v_{i+1})$, and if $i=b-1$, at  $\Sigma_lM_1(v_i)$.
\end{corollary}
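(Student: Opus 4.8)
The plan is to read the statement off directly from the explicit block descriptions (\ref{eq415})--(\ref{eq419}) established for $M_1$, treating the two admissible values $i=2$ and $i=b-1$ separately. In each case I would fix a row index $l$ in the stated range and determine, block by block, which of the matrices $B_1(w_i,v_1),\dots,B_1(w_i,v_b)$ has a nonzero entry in row $l$ and in which column that entry sits. Since $\Gamma_lB_1(w_i)$ is the concatenation $[\Gamma_lB_1(w_i,v_1)\ \cdots\ \Gamma_lB_1(w_i,v_b)]$, collating these row-slices yields the claim. The point of operation $D_1$ (cf. Remark \ref{rem42}) was precisely to clear, for these two extreme values of $i$, one of the two entries $(-1)^{i+1}3$ present beforehand, and (\ref{eq415})--(\ref{eq419}) record the outcome; the corollary asserts that the clearing succeeded.

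\textbf{Case $i=2$.} Here the excluded row is $l=i-1=1$, so $2\le l\le b-2$. First I would note that $B_1(w_2,v_1)=O$ by (\ref{eq339}), (\ref{eq415}), and that $B_1(w_2,v_j)=O$ for $4\le j\le b-1$ by (\ref{eq3310}), (\ref{eq415}). The blocks $B_1(w_2,v_2)=-2E_1-E_{1,2}$ of (\ref{eq416}) and $B_1(w_2,v_b)=(-1)^bE_1$ of (\ref{eq418}) are supported entirely in row $1$, hence contribute nothing to $\Gamma_lB_1(w_2)$ for $l\ge 2$. Thus for $l\ge 2$ the only surviving block is $B_1(w_2,v_3)=B(w_2,v_3)$, given by (\ref{eq3312}) with $i=2$; reading off its rows shows that for $2\le l\le b-2$ the unique nonzero entry of $\Gamma_lB_1(w_2)$ equals $-3=(-1)^{i+1}3$ and lies in $\Sigma_{l+1}M_1(v_3)=\Sigma_{l+1}M_1(v_{i+1})$, as claimed.

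\textbf{Case $i=b-1$.} This is symmetric, with the excluded row now $l=i-1=b-2$, so $1\le l\le b-3$. The blocks $B_1(w_{b-1},v_2)=-E_{b-2,b-1}$ of (\ref{eq417}) and $B_1(w_{b-1},v_b)=(-1)^b(E_{b-2}+2E_{b-2,b-1})$ of (\ref{eq419}) are supported entirely in row $b-2$, while $B_1(w_{b-1},v_1)=O$ and $B_1(w_{b-1},v_j)=O$ for $3\le j\le b-2$ by (\ref{eq339}), (\ref{eq3310}), (\ref{eq415}). Hence for $l\le b-3$ the only surviving block is $B_1(w_{b-1},v_{b-1})=B(w_{b-1},v_{b-1})$ of (\ref{eq3311}) with $i=b-1$, whose rows $1,\dots,b-3$ carry a single entry $(-1)^b3=(-1)^{i+1}3$ in column $l$; thus the unique nonzero entry of $\Gamma_lB_1(w_{b-1})$ lies in $\Sigma_lM_1(v_{b-1})=\Sigma_lM_1(v_i)$.

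The computation is essentially bookkeeping, and the only delicate point is the index tracking created by the $1\times 2$ central blocks $[1\ 2]$ and $[2\ 1]$ in (\ref{eq3312}) and (\ref{eq3311}): these produce a one-step shift between row and column indices (row $l$ meeting column $l+1$ when $i=2$, versus column $l$ when $i=b-1$). The main thing to verify is that every entry not counted above---the residual row-$1$ (resp. row-$(b-2)$) entries left in the $v_2$ and $v_b$ blocks after $D_1$, together with the non-$3$ part of the central block of the surviving block---lands precisely in the single excluded row $l=i-1$, so that no collision occurs with the $(-1)^{i+1}3$ entries. This is exactly what (\ref{eq416})--(\ref{eq419}) and the shapes in (\ref{eq3311}), (\ref{eq3312}) guarantee, leaving each row $l\ne i-1$ with its unique nonzero entry.
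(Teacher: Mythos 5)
Your proposal is correct and follows exactly the paper's route: the paper derives Corollary \ref{cor45} by direct inspection of the block formulas (\ref{eq415})--(\ref{eq419}) (together with (\ref{eq339}), (\ref{eq3310})--(\ref{eq3312})), which is precisely the block-by-block bookkeeping you carry out, including the correct observation that the residual entries in the $v_2$ and $v_b$ blocks sit only in the excluded row $l=i-1$. Your index tracking (column $l+1$ of $v_{i+1}$ for $i=2$, column $l$ of $v_i$ for $i=b-1$, sign $(-1)^{i+1}3$) matches the paper's statement, so nothing is missing.
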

From Corollary \ref{cor45}, we conclude that by applying operation $D_1$ to matrix $M$, we achieve the goal introduced after Remark \ref{rem42}, for $i=2$ or $i=b-1$ and $1\leq l\leq b-2$ with $l\neq i-1$.

From (\ref{eq4110}) - (\ref{eq4114}), we obtain two more useful results on $M_1$. Specifically,
\begin{corollary}\label{cor46}\text{ }
\begin{enumerate}
\item For $2<j\leq b-2$ and  $j\leq l\leq b-2$, $\Gamma_lC_1(w_{2j})$ contains a unique non-zero entry equal to $-2$, located at $\Sigma_{l+1}M_1(v_{j+1})$.
\item For $3\leq i<b-1$ and $1\leq l\leq i-2$,  $\Gamma_lC_1(w_{ib-1})$ contains a unique non-zero entry equal to $(-1)^{b}2$, located at $\Sigma_l M_1(v_i)$.
\end{enumerate}
\end{corollary}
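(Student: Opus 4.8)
The plan is to read the two claims directly off the explicit block descriptions, since each part concerns a single row $\Gamma_l$ of one horizontal block $C_1(w_{ij})$, and $C_1(w_{ij})$ is assembled from the sub-blocks $C_1(w_{ij},v_h)$, $1\leq h\leq b$. The entire argument reduces to determining, for the prescribed range of the row index $l$, which of these sub-blocks can carry a nonzero entry in row $l$, then checking that there is exactly one and reading off its value and its column. No mathematical input beyond (\ref{eq4110})--(\ref{eq4114}) and (\ref{eq3315}) is required.

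For part (1) I would fix $2<j\leq b-2$ and observe that, with $i=2$, the only sub-blocks of $C_1(w_{2j})$ that are not identically zero are $C_1(w_{2j},v_2)$ [from (\ref{eq4111})], $C_1(w_{2j},v_{j+1})=C(w_{2j},v_{j+1})$ [from (\ref{eq3316}) via (\ref{eq4110}), which applies since $3\leq j+1\leq b-1$], and $C_1(w_{2j},v_b)$ [from (\ref{eq4113}), which applies since $2<j\leq b-2$]; all remaining $C_1(w_{2j},v_h)$ vanish by (\ref{eq3313}) and (\ref{eq3314}). Restricting now to $j\leq l\leq b-2$, I would note that $\Gamma_l$ of both (\ref{eq4111}) and (\ref{eq4113}) falls in the trailing zero block and hence vanishes, while $\Gamma_l$ of (\ref{eq3316}) with $i=2$ falls in the block $2I_{(b-2)-(i-1)}=2I_{b-3}$ and contributes the single entry $(-1)^{2+1}\cdot 2=-2$ at its $(l+1)$-st column, i.e. at $\Sigma_{l+1}M_1(v_{j+1})$. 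Since $j+1\leq b-1<b$, the column blocks $v_{j+1}$ and $v_b$ are distinct, so this is the unique nonzero entry of $\Gamma_l C_1(w_{2j})$.

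Part (2) is handled symmetrically by specialising $j=b-1$: for $3\leq i<b-1$ the nonzero sub-blocks of $C_1(w_{i,b-1})$ are $C_1(w_{i,b-1},v_2)$ [from (\ref{eq4112})], $C_1(w_{i,b-1},v_i)=C(w_{i,b-1},v_i)$ [from (\ref{eq3315})], and $C_1(w_{i,b-1},v_b)$ [from (\ref{eq4114})], the rest vanishing by (\ref{eq3313}) and (\ref{eq3314}). Restricting to $1\leq l\leq i-2$, the row $\Gamma_l$ of (\ref{eq4112}) lies in the leading block $O_{i-1}$ and that of (\ref{eq4114}) in $O_{i-2}$, so both vanish, whereas $\Gamma_l$ of (\ref{eq3315}) with $j=b-1$ lies in the leading block $2I_{j-2}=2I_{b-3}$ (note $l\leq i-2\leq b-4<b-3$) and supplies the single entry $(-1)^{(b-1)+1}\cdot 2=(-1)^b 2$ at $\Sigma_l M_1(v_i)$. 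As $3\leq i<b-1$, the block $v_i$ differs from both $v_2$ and $v_b$, giving uniqueness.

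Since each claim is a statement about a single row of one explicitly given integer matrix, I expect no genuine obstacle; the only care needed is bookkeeping, namely correctly matching the index shifts in the block decompositions (\ref{eq4110})--(\ref{eq4114}) and (\ref{eq3315}) against the prescribed range of $l$, so that the zero and nonzero regions of each sub-block are identified properly, and confirming that the surviving column block ($v_{j+1}$ in part (1), $v_i$ in part (2)) is distinct from the other candidate blocks.
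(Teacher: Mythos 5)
Your proposal is correct and follows exactly the route the paper intends: the paper derives Corollary \ref{cor46} directly from the explicit block formulas (\ref{eq4110})--(\ref{eq4114}) (together with (\ref{eq3313})--(\ref{eq3316})), which is precisely the bookkeeping you carry out, only in more detail. Your index checks (row $\Gamma_l$ landing in the trailing/leading zero blocks of (\ref{eq4111}), (\ref{eq4113}), (\ref{eq4112}), (\ref{eq4114}), and in the $2I$-blocks of (\ref{eq3316}) and (\ref{eq3315})) are all accurate, so nothing further is needed.
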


\subsection{Operation $\boldsymbol{D_2}$}From (\ref{eq339}) - (\ref{eq3312}) and (\ref{eq415}) - (\ref{eq419}), it follows that for  $3\leq i\leq b-2$ and  $1\leq l\leq b-2$ with $l\neq i-1$, it holds $\Gamma_lB_1(w_i)=\Gamma_lB(w_i)$. We know that $\Gamma_lB(w_i)$ satisfies Remark \ref{rem42}, hence $M_1$ has the following property:
\begin{corollary}\label{cor47}
For $3\leq i\leq b-2$ and $1\leq l\leq b-2$ with $l\neq i-1$, $\Gamma_lB_1(w_i)$ contains exactly two non-zero entries, each equal to $(-1)^{i+1}3$, located 
\begin{itemize}
\item if\; $1\leq l\leq i-2$, at $\Sigma_l(M_1(v_i))$  and  $\Sigma_l(M_1(v_{i+1}))$, respectively
\item if\; $i\leq l\leq b-2$, at $\Sigma_{l+1}(M_1(v_i))$ and  $\Sigma_{l+1}(M_1(v_{i+1}))$, respectively.
\end{itemize}
\end{corollary}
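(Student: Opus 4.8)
The plan is to reduce the corollary to Remark \ref{rem42}, which already describes $\Gamma_l B(w_i)$ for the original matrix $M$, by checking that operation $D_1$ leaves these rows untouched whenever $3\leq i\leq b-2$. First I would locate where the block $B(w_i)$ can be nonzero: by (\ref{eq339})--(\ref{eq3312}) the only column-blocks in which $B(w_i)$ has nonzero entries are those of $v_i$ and $v_{i+1}$, while $B(w_i,v_1)=O$ and $B(w_i,v_j)=O$ for every $j\neq i,i+1$.

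Next I would invoke Remark \ref{rem44} (equivalently Definition \ref{def43}): applying $D_1$ to $M$ alters only the column-blocks of $v_2$ and $v_b$. For the interior range $3\leq i\leq b-2$ one has $3\leq i<i+1\leq b-1$, so neither $v_i$ nor $v_{i+1}$ coincides with $v_2$ or $v_b$. Hence $D_1$ disturbs none of the blocks making up $B(w_i)$, which gives $\Gamma_l B_1(w_i)=\Gamma_l B(w_i)$ for every row index $l$; this is precisely the content of (\ref{eq415}) specialized to $j=i$ and $j=i+1$, combined with the vanishing of the remaining blocks. With this identity in hand, I would simply transport Remark \ref{rem42}: for $1\leq l\leq b-2$ with $l\neq i-1$, the row $\Gamma_l B(w_i)$ carries exactly two nonzero entries, each equal to $(-1)^{i+1}3$, sitting in column $l$ of the $v_i$- and $v_{i+1}$-blocks when $1\leq l\leq i-2$, and in column $l+1$ of those blocks when $i\leq l\leq b-2$. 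Since $\Gamma_l B_1(w_i)=\Gamma_l B(w_i)$ and $D_1$ does not move these columns, the same description holds with $M_1$ in place of $M$, which is exactly the assertion of the corollary.

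The proof is thus essentially bookkeeping, and the only point demanding care — the sole \emph{obstacle} — is the boundary analysis of the index ranges. One must verify that the hypothesis $3\leq i\leq b-2$, as opposed to the extreme values $i=2$ and $i=b-1$ treated separately in Corollary \ref{cor45}, is exactly what keeps both $v_i$ and $v_{i+1}$ clear of the two columns $v_2$ and $v_b$ that $D_1$ modifies; at the excluded endpoints one of these blocks would fall on $v_2$ or $v_b$ and the row would genuinely change, which is why those cases are handled by a different statement.
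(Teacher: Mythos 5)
There is a genuine gap in your argument. The step ``$D_1$ disturbs none of the blocks making up $B(w_i)$, which gives $\Gamma_l B_1(w_i)=\Gamma_l B(w_i)$ for every row index $l$'' is false, and the error is a misreading of Remark \ref{rem44}. That remark says the changes caused by $D_1$ are \emph{located} in the column-blocks of $v_2$ and $v_b$; it does not say that those blocks are unchanged in the rows of $w_i$. The operations $D_1(v_2,\tau)$ and $D_1(v_b,r)$ add to the columns of the $v_2$- and $v_b$-blocks alternating sums of columns taken from the blocks of $v_j$ with $3\leq j\leq b-1$ --- in particular from the blocks of $v_i$ and $v_{i+1}$, which is exactly where $\Gamma_l B(w_i)$ has its non-zero entries. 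Consequently, new non-zero entries are created inside $B_1(w_i,v_2)$ and $B_1(w_i,v_b)$ even though $B(w_i,v_2)=B(w_i,v_b)=O$ by (\ref{eq3310}): indeed, by (\ref{eq417}) and (\ref{eq418}) one has $B_1(w_i,v_2)=-E_{i-1,i}$ and $B_1(w_i,v_b)=(-1)^bE_{i-1}$ for $3\leq i\leq b-2$, both non-zero. So your claimed identity fails precisely at $l=i-1$, and your supporting assertion about ``the vanishing of the remaining blocks'' of $B_1(w_i)$ contradicts the paper's own formulas.

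What saves the corollary is that all the newly created entries lie in row $i-1$, which the statement excludes: in the alternating sums defining $D_1$, the contributions of $\Sigma_\tau B(w_i,v_i)$ and $\Sigma_\tau B(w_i,v_{i+1})$ cancel each other wherever both occur, and the only surviving contributions (column $\tau=i$ for the $v_2$-block, column $r=i-1$ for the $v_b$-block) produce entries sitting in row $i-1$. This cancellation is the actual content of the paper's one-line justification, which compares (\ref{eq339})--(\ref{eq3312}) with the computed formulas (\ref{eq415})--(\ref{eq419}) and concludes $\Gamma_lB_1(w_i)=\Gamma_lB(w_i)$ \emph{only} for $l\neq i-1$, then transports Remark \ref{rem42} exactly as you do. Your closing paragraph likewise mislocates the role of the hypothesis $3\leq i\leq b-2$: it is not that the rows of the interior blocks escape $D_1$ altogether (they do not), but that for interior $i$ the damage is confined to the excluded row $i-1$, whereas for $i=2$ or $i=b-1$ the non-zero blocks $B(w_2,v_2)$ and $B(w_{b-1},v_b)$ are themselves modified, so that rows with $l\neq i-1$ also change --- which is why those cases end up with a \emph{single} non-zero entry (Corollary \ref{cor45}) rather than two. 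To repair your proof you must actually compute the effect of $D_1$ on the $v_2$- and $v_b$-blocks of the $w_i$ rows, i.e.\ reprove (\ref{eq416})--(\ref{eq419}), rather than argue that no computation is needed.
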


Now, we will apply the appropriate row operations to $M_1$
to vanish $(-1)^{i+1}3$, where $3\leq i\leq b-2$, located at $\Gamma_lB_1(w_i,v_{i+1})$, where $1\leq l\leq i-2$, and at $\Gamma_lB_1(w_i,v_i)$,
where $i\leq l\leq b-2$. Thus, we need the following

\begin{proposition}\label{pro48}
We consider matrix $M_1$. For $3\leq i\leq b-2$, the following hold :
\begin{enumerate}
\item If\; $1\leq l\leq i-2$, the sum of rows $\sum\limits_{h=i+1}^{b-1}\Gamma_lB_1(w_h)$ contains a unique non-zero entry equal to $(-1)^i3$, located at the $l-th$ column of the block of $v_{i+1}$.
\item If\; $i\leq l\leq b-2$, the sum of rows $\sum\limits_{h=2}^{i-1}\Gamma_lB_1(w_h)$ contains a unique non-zero entry equal to $(-1)^i3$, located at the $(l+1)-th$ column of the block of $v_i$.
\end{enumerate}
\end{proposition}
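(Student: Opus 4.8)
The plan is to prove both statements by direct computation, exploiting the explicit block structure of the matrices $B_1(w_h, v_j)$ recorded in $(\ref{eq415})$--$(\ref{eq419})$ together with the original formulas $(\ref{eq3311})$ and $(\ref{eq3312})$. The key observation is that operation $D_1$ only altered the blocks of $v_2$ and $v_b$ (Remark \ref{rem44}), so for the intermediate indices the blocks $B_1(w_h, v_{i})$ and $B_1(w_h, v_{i+1})$ with $3 \le i \le b-2$ coincide with the untouched $B(w_h, v_i)$ and $B(w_h, v_{i+1})$. Thus I would work almost entirely from $(\ref{eq3311})$--$(\ref{eq3312})$, reading off which entry of the $l$-th row of each $B(w_h)$ lands in the relevant column.

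First I would fix the target column. For part (1), $3 \le i \le b-2$ and $1 \le l \le i-2$, the goal is the $l$-th column of the block of $v_{i+1}$; for part (2), $i \le l \le b-2$, the goal is the $(l+1)$-th column of the block of $v_i$. I would then determine, for each summand index $h$, whether $B_1(w_h, v_{i+1})$ (resp.\ $B_1(w_h, v_i)$) is nonzero: by $(\ref{eq3310})$ the block $B(w_h, v_j)$ vanishes unless $j = h$ or $j = h+1$. So $v_{i+1}$ appears as a column-block of $w_h$ exactly when $h = i+1$ (giving $B(w_{i+1}, v_{i+1})$, of diagonal type $(\ref{eq3311})$) or $h = i$ (giving $B(w_i, v_{i+1})$, type $(\ref{eq3312})$), and similarly $v_i$ appears exactly when $h = i$ or $h = i-1$. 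This immediately restricts the sum $\sum_{h=i+1}^{b-1} \Gamma_l B_1(w_h)$ in part (1) to the single term $h = i+1$, and the sum $\sum_{h=2}^{i-1}\Gamma_l B_1(w_h)$ in part (2) to the single term $h = i-1$; every other summand contributes zero in the target column-block by $(\ref{eq3310})$.

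Next I would extract the precise scalar. In part (1) the surviving block is $B(w_{i+1}, v_{i+1}) = (-1)^{i+2}[\,3I_{i-1} \mid 2\ 1 \mid 3I_{\dots}\,]$ from $(\ref{eq3311})$ with $i$ replaced by $i+1$; since $1 \le l \le i-2$ falls in the leading $3I$ block, its $l$-th row has entry $(-1)^{i+2}3 = (-1)^i 3$ precisely in column $l$, which is exactly the target column of $v_{i+1}$. In part (2) the surviving block is $B(w_{i-1}, v_i) = B(w_{i-1}, v_{(i-1)+1})$, of type $(\ref{eq3312})$ with the shifted index, whose trailing $3I$ block covers rows $l$ with $i \le l \le b-2$; reading off the column index shows the nonzero entry $(-1)^i 3$ sits in column $l+1$ of the $v_i$ block, as claimed. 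The sign bookkeeping $(-1)^{i+2} = (-1)^i$ is what produces the stated $(-1)^i 3$.

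The main obstacle I expect is not conceptual but indexing: verifying that in the diagonal-with-shift blocks $(\ref{eq3311})$--$(\ref{eq3312})$ the row index $l$ in the given ranges genuinely lands in the uniform $3I$ part (and never in the anomalous central $2\ 1$ or $1\ 2$ pair), and that after the $I_{i-2}$ vs.\ $I_{i-1}$ size shifts the nonzero entry occupies the asserted column ($l$ for $v_{i+1}$, $l+1$ for $v_i$). I would handle this by carefully matching the block boundaries against the row-range hypotheses $1 \le l \le i-2$ and $i \le l \le b-2$, confirming the central pair is avoided in each case, and checking that every discarded summand vanishes in the target column either because its column-block is not $v_{i+1}$ (resp.\ $v_i$) at all, or because $(\ref{eq3310})$ forces a zero block there. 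This establishes uniqueness of the nonzero entry and completes both parts.
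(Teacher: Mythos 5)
There is a genuine gap, and it concerns precisely the word \emph{unique} in the statement. Your argument analyzes only the \emph{target} column-block ($v_{i+1}$ in part (1), $v_i$ in part (2)): you correctly show that within that block only one summand contributes, and that its contribution is $(-1)^i 3$ at the asserted position. But the proposition asserts that the entire summed row $\sum_{h=i+1}^{b-1}\Gamma_l B_1(w_h)$ has no other non-zero entry anywhere, and your proposal never addresses the entries of the sum outside the target block. These do not vanish summand by summand: by Corollary \ref{cor47}, each individual row $\Gamma_l B_1(w_h)$ with $i+1\leq h\leq b-2$ (and $l\leq i-2\leq h-3$) contains \emph{two} non-zero entries, $(-1)^{h+1}3$ in the block of $v_h$ and $(-1)^{h+1}3$ in the block of $v_{h+1}$. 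So, for instance, the block of $v_{i+2}$ in your ``single surviving term'' $\Gamma_l B_1(w_{i+1})$ carries a non-zero entry $(-1)^i 3$; it disappears from the total only because the next summand $\Gamma_l B_1(w_{i+2})$ contributes $(-1)^{i+1}3$ at the same position. In other words, the sum is a telescoping sum in which consecutive blocks $B(w_h,v_{h+1})$ and $B(w_{h+1},v_{h+1})$ cancel pairwise, and the chain terminates correctly at $h=b-1$ only because operation $D_1$ has already emptied row $l$ of $B_1(w_{b-1},v_b)$ (Corollary \ref{cor45}); the analogous termination in part (2) uses $B_1(w_h,v_2)=-E_{h-1,h}$ being zero in row $l\geq i$. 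None of this cancellation bookkeeping appears in your proposal, yet it is the entire content of the proposition --- it is why a sum of rows, rather than a single row, occurs in the statement.

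This is exactly what the paper's proof supplies: it proceeds by induction on $\tau=(b-1)-i$, with base case $i=b-2$ given by Corollary \ref{cor45} (one row, already a single entry, thanks to $D_1$), and inductive step
\begin{equation*}
\sum\limits_{h=i}^{b-1}\Gamma_lB_1(w_h)=\Gamma_lB_1(w_i)+\sum\limits_{h=i+1}^{b-1}\Gamma_lB_1(w_h),
\end{equation*}
where Corollary \ref{cor47} says $\Gamma_lB_1(w_i)$ has exactly two entries $(-1)^{i+1}3$ (in the blocks of $v_i$ and $v_{i+1}$) and the induction hypothesis says the tail has a unique entry $(-1)^i 3$ in the block of $v_{i+1}$; these cancel, leaving the single entry in the block of $v_i$. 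To repair your proof you would need to add this cancellation argument (inductively or by an explicit pairing of summands), together with the check that rows $l$ of the modified blocks $B_1(w_h,v_2)$ and $B_1(w_h,v_b)$ vanish in the relevant range; the computation of the scalar in the target column, which is the part you did carry out, is the easy step.
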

\begin{proof}(1) We proceed by induction on $\tau=(b-1)-i$  $(1\leq\tau\leq b-4)$.
Let $\tau=1$. Then $i=b-2$, hence for $1\leq l\leq i-2=b-4$, we have
\begin{equation*}
\sum\limits_{h=i+1}^{b-1}\Gamma_lB_1(w_h)=\Gamma_lB_1(w_{b-1}).
\end{equation*}
Furthermore, according to Corollary \ref{cor45}, $\Gamma_lB_1(w_{b-1})$ contains a unique non-zero entry equal to $(-1)^b3$, located at the $l-th$ column of the block of $v_{b-1}$, $\Sigma_lM_1\left(v_{b-1}\right)$. Therefore, Proposition \ref{pro48}(1) holds for $i=b-2$, equivalently, for $\tau=1$.
Let $\tau =(b-1)-i$, where $1\leq\tau\leq b-5$ (equivalently, $i=(b-1)-\tau$, where $4\leq i\leq b-2$). We assume that (induction hypothesis) Proposition \ref{pro48}(1) holds for  $\tau$. We will show that Proposition \ref{pro48}(1) holds for $\tau+1$. Let $1\leq l\leq(i-1)-2=i-3$. Then
\begin{equation*}
\sum\limits_{h=(i-1)+1}^{b-1}\Gamma_lB_1(w_h)=\sum\limits_{h=i}^{b-1}\Gamma_lB_1(w_h)=\Gamma_lB_1(w_i)+\sum\limits_{h=i+1}^{b-1}\Gamma_lB_1(w_h),
\end{equation*}
which, in combination with Corollary \ref{cor47} and the induction hypothesis for $i$, implies that Proposition \ref{pro48}(1) holds for $i-1$, equivalently, for $\tau+1$. Now the proof is complete.
(2) Similarly, by induction on $i$ ($3\leq i\leq b-2$).
\end{proof}

\begin{definition}\label{def49}
We define operation $D_2$ as 
\begin{equation*}
D_2:=\{D_2(w_i,l):\; 3\leq i\leq b-2,\; 1\leq l\leq b-2 \text{ and } l\neq i-1\},
\end{equation*}
where for $3\leq i\leq b-2$,
\begin{equation*}
D_2(w_i,l):\begin{cases}
\Gamma_lB_1(w_i)+\sum\limits_{h=i+1}^{b-1}\Gamma_lB_1(w_h),\text{ where } 1\leq l\leq i-2\\
\Gamma_lB_1(w_i)+\sum\limits_{h=2}^{i-1}\Gamma_lB_1(w_h),\text{ where } i\leq l\leq b-2.
\end{cases}
\end{equation*}
\end{definition}

Definition \ref{def49}, Corollary \ref{cor47} and Proposition \ref{pro48}, yield the following 

\begin{corollary}\label{cor410}
The unique change caused by applying operation $D_2$ to matrix $M_1$ is the nulling of $(-1)^{i+1}3$, where $3\leq i\leq b-2$, located at $\Gamma_lB_1(w_i,v_{i+1})$, where  $1\leq l\leq i-2$, and at $\Gamma_lB_1(w_i, v_i)$, where $i\leq l\leq b-2$.
\end{corollary}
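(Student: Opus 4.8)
The plan is to verify, one row operation at a time, that each $D_2(w_i,l)$ from Definition \ref{def49} cancels precisely one entry, and then to argue that the operations constituting $D_2$ do not interfere with one another. I would split the argument according to the two cases in the definition of $D_2(w_i,l)$, since these correspond exactly to the two families of nulled entries listed in the statement.

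First, for the case $1\leq l\leq i-2$, I would combine Corollary \ref{cor47} with Proposition \ref{pro48}(1). Corollary \ref{cor47} tells us that $\Gamma_lB_1(w_i)$ has exactly two nonzero entries, each equal to $(-1)^{i+1}3$, sitting at the $l$-th column of the blocks of $v_i$ and $v_{i+1}$. Proposition \ref{pro48}(1) tells us that the row being added, $\sum_{h=i+1}^{b-1}\Gamma_lB_1(w_h)$, has a single nonzero entry equal to $(-1)^i3$ at the $l$-th column of the block of $v_{i+1}$. Since $(-1)^{i+1}=-(-1)^i$, these two entries in the $v_{i+1}$ block cancel, while the entry in the $v_i$ block is untouched; hence the net effect of $D_2(w_i,l)$ is exactly the nulling of $(-1)^{i+1}3$ at $\Gamma_lB_1(w_i,v_{i+1})$, as claimed.

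Second, for the case $i\leq l\leq b-2$, I would run the symmetric argument using Corollary \ref{cor47} together with Proposition \ref{pro48}(2). Here the two nonzero entries of $\Gamma_lB_1(w_i)$ sit at the $(l+1)$-th column of the blocks of $v_i$ and $v_{i+1}$, and the added row $\sum_{h=2}^{i-1}\Gamma_lB_1(w_h)$ contributes its single nonzero entry $(-1)^i3$ at the $(l+1)$-th column of the block of $v_i$. Thus the cancellation now occurs in the $v_i$ block, nulling $(-1)^{i+1}3$ at $\Gamma_lB_1(w_i,v_i)$.

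Finally, to conclude that these are the \emph{only} changes, I would observe that each $D_2(w_i,l)$ modifies solely its target row $\Gamma_lB_1(w_i)$, and that every summand $\Gamma_lB_1(w_h)$ entering the operations is read from the fixed matrix $M_1$ rather than from a partially updated matrix; since distinct operations target distinct rows indexed by $(w_i,l)$, they may be applied simultaneously without mutual interference. I expect the only point requiring care --- rather than a genuine obstacle, since the substantive computation has already been packaged into Proposition \ref{pro48} --- to be keeping the two cases' column indices straight (column $l$ versus column $l+1$) and confirming that the pair of entries slated to cancel indeed share the same block and column, so that the sign identity $(-1)^{i+1}+(-1)^i=0$ applies cleanly.
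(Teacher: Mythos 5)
Your proposal is correct and matches the paper's own (implicit) argument: the paper derives Corollary \ref{cor410} directly from Definition \ref{def49}, Corollary \ref{cor47} and Proposition \ref{pro48}, exactly via the sign cancellation $(-1)^{i+1}3+(-1)^i3=0$ in the block of $v_{i+1}$ (case $1\leq l\leq i-2$) and of $v_i$ (case $i\leq l\leq b-2$) that you describe. Your closing observation --- that all summands are read from the fixed matrix $M_1$ and distinct operations target distinct rows, so the individual effects combine without interference --- is precisely the convention the paper uses throughout, so nothing further is needed.
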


Now, we obtain matrix $M_2:=D_2M_1=D_2D_1M$, given by Proposition \ref{pro41} for $n=2$. From (\ref{eq41a}) for $n=2$ and Corollary \ref{cor410}, it follows that
\begin{equation}\label{eq421}
A_2(u_i,v_j)=A_1(u_i,v_j), \text{where }  2\leq i\leq b-1 \text{ and } 1\leq j\leq b, 
\end{equation}
and $A_1(u_i,v_j)$ is given by (\ref{eq411}) - (\ref{eq414}).

From (\ref{eq41b}) for $n=2$, Corollary \ref{cor410}, and (\ref{eq415}), we obtain the following :
\begin{equation}\label{eq422}
B_2(w_i,v_j)=B(w_i,v_j),
\end{equation} 
where $2\leq i\leq b-1$ and $j=1$ or $3\leq j\leq b-1$ with $j\neq i$ and $j\neq i+1$ or $j=i=b-1$ or $j=i+1=3$, and $B(w_i,v_j)$ is given by (\ref{eq339}), (\ref{eq3310}), (\ref{eq3311}) and (\ref{eq3312}), respectively, and
\begin{equation}\label{eq423}
B_2(w_i,v_j)=B_1(w_i,v_j),\text{ where } 2\leq i\leq b-1 \text{ and } j=2 \text{ or } j=b,
\end{equation}
and $B_1(w_i,v_j)$ is given by (\ref{eq416}) - (\ref{eq417}) and (\ref{eq418}) - (\ref{eq419}), respectively.

From (\ref{eq41b}) for $n=2$, (\ref{eq415}), (\ref{eq3311}), (\ref{eq3312}) and Corollary \ref{cor410}, we conclude that 
\begin{equation}\label{eq424}
B_2(w_i,v_i)=(-1)^{i+1}\begin{bmatrix}
3I_{i-2}& & &\\
& 2& 1&\\
&  &  & O_{(b-2)-(i-1)} 
\end{bmatrix},
\text{ where } 3\leq i\leq b-2,\text{ and}
\end{equation}
\begin{equation}\label{eq425}
B_2(w_i,v_{i+1})=(-1)^{i+1}\begin{bmatrix}
O_{i-2}& & &\\
& 1& 2&\\
&  &  & 3I_{(b-2)-(i-1)}
\end{bmatrix},
\text{ where } 3\leq i\leq b-2.
\end{equation}

From (\ref{eq41c}) for $n=2$ and Corollary \ref{cor410}, it follows that
\begin{equation}\label{eq426}
C_2 (w_{ij},v_h)=C_1(w_{ij},v_h),\text{ where } 2\leq i<j\leq b-1 \text{ and }  1\leq h\leq b, 
\end{equation} 
and $C_1(w_{ij}, v_h)$ is given by (\ref{eq4110}) - (\ref{eq4114}).\\
\textbf{Result of $\boldsymbol{D_2}$.} According to (\ref{eq422}) - (\ref{eq425}), matrix  $M_2$ has the following property:
\begin{proposition}\label{pro411}
For $2\leq i\leq b-1$ and  $1\leq l\leq b-2$ with $l\neq i-1$,  $\Gamma_lB_2(w_i)$ contains a unique non-zero entry equal to $(-1)^{i+1}3$, located 
\begin{itemize}
\item if $1\leq l\leq i-2$, at $\Sigma_l M_2(v_i)$
\item if $i\leq l\leq b-2$, at $\Sigma_{l+1} M_2(v_{i+1})$.
\end{itemize}
\end{proposition}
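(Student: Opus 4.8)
The plan is to read the statement directly off the explicit block forms assembled in (\ref{eq422})--(\ref{eq425}), handling the interior range $3\le i\le b-2$ and the two boundary values $i=2,b-1$ separately. Throughout I would fix $i$ and a row index $l$ with $1\le l\le b-2$ and $l\neq i-1$, and recall that $B_2(w_i)=\left[B_2(w_i,v_1)\ \cdots\ B_2(w_i,v_b)\right]$. Thus any nonzero entry of $\Gamma_l B_2(w_i)$ must originate in one of the $b$ column-blocks $B_2(w_i,v_j)$, and the whole argument reduces to locating, for each $j$, the entries that row $l$ of $B_2(w_i,v_j)$ can carry.

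First I would discard the irrelevant column-blocks. By (\ref{eq422}) together with (\ref{eq339}) and (\ref{eq3310}), the blocks $B_2(w_i,v_1)$ and $B_2(w_i,v_j)$ for $3\le j\le b-1$ with $j\neq i,i+1$ vanish, so they contribute nothing in any row. For $v_2$ and $v_b$ I would invoke (\ref{eq423}): these equal $B_1(w_i,v_2)$ and $B_1(w_i,v_b)$, whose nonzero entries, by (\ref{eq416})--(\ref{eq419}), all sit in row $i-1$; since $l\neq i-1$ they contribute nothing to $\Gamma_l B_2(w_i)$. Hence in the interior range the only possible contributions to row $l$ come from $B_2(w_i,v_i)$ and $B_2(w_i,v_{i+1})$.

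The heart of the proof is then a direct inspection of (\ref{eq424}) and (\ref{eq425}). For $1\le l\le i-2$, the block $3I_{i-2}$ in (\ref{eq424}) places $(-1)^{i+1}3$ in column $l$ of the $v_i$-block, while row $l$ of (\ref{eq425}) lies in its zero top-left corner $O_{i-2}$; this yields the asserted unique entry at $\Sigma_l M_2(v_i)$. Symmetrically, for $i\le l\le b-2$, row $l$ of (\ref{eq424}) falls in its zero block $O_{(b-2)-(i-1)}$, whereas the block $3I_{(b-2)-(i-1)}$ in (\ref{eq425}) places $(-1)^{i+1}3$ in column $l+1$ of the $v_{i+1}$-block, giving the entry at $\Sigma_{l+1} M_2(v_{i+1})$. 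Collecting the previous paragraph, this settles the interior range.

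Finally I would dispose of $i=2$ and $i=b-1$. By Corollary \ref{cor410}, operation $D_2$ alters only the blocks $B_1(w_i)$ with $3\le i\le b-2$, so $B_2(w_2)=B_1(w_2)$ and $B_2(w_{b-1})=B_1(w_{b-1})$; for these two values the statement is precisely Corollary \ref{cor45}. The only point demanding care is the index bookkeeping in the third step — verifying that the identity block $3I_{(b-2)-(i-1)}$ sends row $l$ to column $l+1$ (rather than column $l$) in the $v_{i+1}$-block, and that the degenerate empty ranges (e.g.\ $3I_{i-2}$ when $i=3$, or the vacuous range $1\le l\le i-2$ when $i=2$) produce no off-by-one error. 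I expect no genuine obstacle here, since all the substantive computation was already carried out in establishing (\ref{eq422})--(\ref{eq425}); the proposition is essentially a consolidation of those identities.
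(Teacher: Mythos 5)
Your proof is correct and takes essentially the same route as the paper: the paper justifies Proposition \ref{pro411} simply by citing the block forms (\ref{eq422})--(\ref{eq425}), which is exactly the inspection you carry out, with the boundary cases $i=2,\,b-1$ handled through Corollary \ref{cor410} and Corollary \ref{cor45} (equivalent to reading (\ref{eq423}) with (\ref{eq416})--(\ref{eq419}) directly). Your write-up only makes explicit the row/column bookkeeping that the paper leaves implicit.
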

Proposition \ref{pro411} verifies that the goal introduced after Remark \ref{rem42} is achieved by applying operations $D_1$ and $D_2$ to matrix $M$.

\subsection{Operation $\boldsymbol{D_3}$} We consider matrix $M_2$ and focus on the block  $C_2(w_{ij})$, where $2\leq i<j\leq b-1$,
given by (\ref{eq426}) and (\ref{eq4110}) - (\ref{eq4114}). 
\begin{remark}\label{rem412}In matrix $M_2$, we notice the following:
\begin{enumerate}
\item For $2<j\leq b-2$ and  $1\leq l\leq j-2$,\\  $\Gamma_lC_2(w_{2b-1},v_h)=(-1)^{j+b+1}\Gamma_lC_2(w_{2j},v_h)\neq O_{1\times(b-1)}$, if $h=2$ or $h=b$,  while the remaining blocks of $\Gamma_lC_2(w_{2b-1})$ are zero.
\item For $3\leq i<j\leq b-2$ and $1\leq l\leq j-2$,\\ $\Gamma_lC_2(w_{ib-1},v_h)=(-1)^{j+b+1}\Gamma_lC_2(w_{ij},v_h)\neq O_{1\times(b-1)}$, if $h=2$ and $i\leq l$ or $h=i$ or $h=b$ and $i-1\leq l$, while the remaining blocks of $\Gamma_lC_2(w_{ib-1})$ are zero.
\item For $3\leq i<j\leq b-2$ and  $j-1\leq l\leq b-2$,\\
$\Gamma_lC_2(w_{2j},v_h)=(-1)^i\Gamma_lC_2(w_{ij},v_h)\neq O_{1\times(b-1)}$, if $h=2$ and $l=j-1$ or $h=j+1$, while the remaining blocks of $\Gamma_lC_2(w_{2j})$ are zero.
\item For $3\leq i<b-1$ and $i\leq l\leq b-2$, \\ $\Gamma_lC_2(w_{2b-1},v_h)=(-1)^i\Gamma_lC_2(w_{ib-1},v_h)\neq O_{1\times(b-1)}$, if $h=2$ or $h=b$, while the remaining blocks of $\Gamma_lC_2(w_{2b-1})$ are zero.
\end{enumerate} 
\end{remark} 
Considering also block $B_2(w_{b-1})$, given by (\ref{eq422}) - (\ref{eq423}), we  calculate that
\begin{align}\label{eq431}
&\Gamma_lC_2(w_{2b-1})+\Gamma_{l+1}C_2(w_{2b-1})+(-1)^{l+b+1}2\Gamma_lC_2(w_{2l+1})+\\
&\nonumber+(-1)^{l+1}2\Gamma_{l+1}C_2(w_{l+2b-1})=\Gamma_{b-3}C_2(w_{2b-1})+2\Gamma_{b-2}C_2(w_{2b-1})+\\
&\nonumber +2\Gamma_{b-3}C_2(w_{2b-2})+\Gamma_{b-2}C_2(w_{2b-2})+(-1)^b2\Gamma_{b-2}B_2(w_{b-1})=\\
&\nonumber=O_{1\times[1+(b-1)^2]}, \text{ where }2\leq l\leq b-4.
\end{align}
 
Remark \ref{rem412} and (\ref{eq431}) lead to the following
\begin{definition}\label{def413}
We define operation $D_3$ as
\begin{equation*}
\begin{array}{ll}
D_3:=&\{D_3(w_{2j},l):\ 2<j\leq b-2,\ 1\leq l\leq j-2\}\cup\{D_3(w_{ij},l):\\
&3\leq i<j\leq b-2,\ 1\leq l\leq b-2\}\cup
\{D_3(w_{ib-1},l):\ 3\leq i<b-1,\\
&i\leq l\leq b-2\}\cup
\{D_3(w_{2b-1},l):\ 2\leq l\leq b-3\},
\end{array}
\end{equation*}
where
\begin{align*}
&\bullet\ D_3(w_{2j},l): \Gamma_lC_2(w_{2j})+(-1)^{j+b}\Gamma_lC_2(w_{2b-1}), \text{where } 2<j\leq b-2 \text{ and }\\
&\hspace*{2,27cm}1\leq l\leq j-2\\
&\bullet\
\text{for } 3\leq i<j\leq b-2,\\
&\hspace*{1,2cm}D_3(w_{ij},l):\begin{cases}
\Gamma_lC_2(w_{ij})+(-1)^{j+b}\Gamma_lC_2(w_{ib-1}),\text{where } 1\leq l\leq j-2\\
\Gamma_lC_2(w_{ij})+(-1)^{i+1}\Gamma_lC_2(w_{2j}),\text{where } j-1\leq l\leq b-2
\end{cases}\\
&\bullet\ 
D_3(w_{ib-1},l): \Gamma_lC_2(w_{ib-1})+(-1)^{i+1} \Gamma_lC_2(w_{2b-1}),\text{where } 3\leq i<b-1 \text{ and }\\
&\hspace*{2,6cm}i\leq l\leq b-2\\
&\bullet\
D_3(w_{2b-1},l):\Gamma_lC_2(w_{2b-1})+ \Gamma_{l+1}C_2(w_{2b-1})+(-1)^{l+b+1}2\Gamma_lC_2(w_{2l+1})+\\
&\hspace*{2,55cm}+(-1)^{l+1}2\Gamma_{l+1}C_2(w_{l+2b-1}), \text{where }2\leq l\leq b-4\\
&\bullet\ 
D_3(w_{2b-1},b-3):\Gamma_{b-3}C_2(w_{2b-1})+2 \Gamma_{b-2}C_2(w_{2b-1})+2\Gamma_{b-3}C_2(w_{2b-2})+\\ 
&\hspace*{3,15cm}+\Gamma_{b-2}C_2(w_{2b-2})+(-1)^b2\Gamma_{b-2}B_2(w_{b-1}).
\end{align*}
\end{definition}

Remark \ref{rem412}, (\ref{eq431}) and Definition \ref{def413} yield the following
\begin{corollary}\label{cor414}
The unique change caused by applying operation $D_3$ to matrix $M_2$ is the nulling of the following non-zero rows :
\begin{itemize}
\item$\Gamma_lC_2(w_{ij},v_2)$, \text{where} $2=i<j\leq b-2$ \text{and} $1\leq l\leq j-2$ \text{ or } $2=i<j=b-1$ \text{and} $2\leq l\leq b-3$ \text{ or } $3\leq i<j\leq b-1$ \text{and} $i\leq l\leq j-1$
\item$\Gamma_lC_2(w_{ij},v_i)$, \text{where} $3\leq i<j\leq b-2$ \text{and} $1\leq l\leq j-2$
\item$\Gamma_lC_2(w_{ij},v_{j+1})$, \text{where} $3\leq i<j\leq b-2$ \text{and} $j-1\leq l\leq b-2$
\item$\Gamma_lC_2(w_{ij},v_b)$, \text{where} $2\leq i<j\leq b-2$ \text{and} $i-1\leq l\leq j-2$ \text{ or } $2=i<j=b-1$ \text{and} $2\leq l\leq b-3$ \text{ or } $3\leq i<j=b-1$ \text{and} $i\leq l\leq b-2$.
\end{itemize}
\end{corollary}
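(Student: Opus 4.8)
The plan is to verify the corollary by computing, family by family, the effect on $M_2$ of each elementary row operation making up $D_3$ in Definition \ref{def413}, matching each to the identity in Remark \ref{rem412} or equation (\ref{eq431}) that it is designed to exploit. For the families $D_3(w_{2j},l)$, $D_3(w_{ij},l)$ (both $l$-ranges), and $D_3(w_{ib-1},l)$, the added row is a scalar multiple of a single auxiliary row, drawn from $C_2(w_{2b-1})$, $C_2(w_{ib-1})$, or $C_2(w_{2j})$ respectively. The corresponding part of Remark \ref{rem412} states two things: that this auxiliary row equals, up to an explicit sign, the target row on exactly the blocks that the corollary lists for that family (for $D_3(w_{ij},l)$ in the first range, for instance, these are $v_i$ always, $v_2$ when $l\ge i$, and $v_b$ when $l\ge i-1$, per part (2)), and that it vanishes on every remaining block. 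First I would record the elementary sign check for each family: for $D_3(w_{2j},l)$ the scalar is $(-1)^{j+b}$ while Remark \ref{rem412}(1) gives $\Gamma_lC_2(w_{2b-1},v_h)=(-1)^{j+b+1}\Gamma_lC_2(w_{2j},v_h)$ for $h\in\{2,b\}$, so the two contributions sum to $O$ on $v_2$ and $v_b$; the cancellations for the remaining three families follow identically from parts (2), (3), (4) of the remark paired with the scalars $(-1)^{j+b}$, $(-1)^{i+1}$, $(-1)^{i+1}$ in Definition \ref{def413}. Since each auxiliary row is supported only on the blocks where it cancels, no block outside the listed ones is touched, so each such operation nulls exactly the asserted row-blocks while leaving the target row otherwise unchanged.

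The family $D_3(w_{2b-1},l)$ is handled by (\ref{eq431}) instead: the target block $C_2(w_{2b-1})$ is supported only on $v_2$ and $v_b$, and the prescribed combination is precisely the left-hand side of (\ref{eq431}) for $2\le l\le b-4$, or of its stated variant for $l=b-3$, which (\ref{eq431}) asserts is identically $O$ across the entire row. Hence $D_3(w_{2b-1},l)$ replaces $\Gamma_lC_2(w_{2b-1})$ by $O$ for $2\le l\le b-3$, nulling its $v_2$- and $v_b$-entries in agreement with the $i=2,\,j=b-1$ clauses of the first and fourth bullets, and introducing no spurious entries elsewhere. Assembling the four families, I would then confirm that the union of the (block, row)-pairs annihilated is exactly the list in the corollary: the $v_i$-block arises solely from $D_3(w_{ij},l)$ with $1\le l\le j-2$, the $v_{j+1}$-block solely from $D_3(w_{ij},l)$ with $j-1\le l\le b-2$, and the $v_2$- and $v_b$-blocks from the remaining three families over their stated ranges; the only delicate point is that the $v_2$-entry of row $j-1$ of $w_{ij}$ is killed by the second-range operation rather than the first, which is what merges them into the range $i\le l\le j-1$ of the first bullet.

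The main obstacle is not the individual cancellations but verifying that the composite $D_3$ alters nothing beyond the listed rows, since several blocks play a dual role: the rows of $C_2(w_{2b-1})$ are sources in the first, third, and fourth families yet targets of the fourth, and the rows of $C_2(w_{ib-1})$ are sources for $D_3(w_{ij},\cdot)$ yet targets of $D_3(w_{ib-1},\cdot)$. I would resolve this by exhibiting an admissible ordering in which every source is read at its $M_2$-value: first apply the operations targeting the pure-target blocks $w_{ij}$ with $3\le i<j\le b-2$, then those targeting $w_{2j}$ and $w_{ib-1}$, and finally those targeting $w_{2b-1}$ (this last family ordered by increasing $l$, so that $\Gamma_{l+1}C_2(w_{2b-1})$ is still original when it is used). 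One checks that no block is ever modified before it has been used as a source, and that the auxiliary rows $\Gamma_lC_2(w_{2l+1})$, $\Gamma_{l+1}C_2(w_{l+2b-1})$, $C_2(w_{2b-2})$, $B_2(w_{b-1})$ invoked in the $w_{2b-1}$-family lie outside every target range and hence retain their $M_2$-values; with this ordering fixed, the identities of Remark \ref{rem412} and (\ref{eq431}) apply verbatim, and the corollary follows.
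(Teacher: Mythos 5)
Your proposal is correct and takes essentially the same route as the paper, whose proof of this corollary is precisely the one-line deduction from Remark \ref{rem412}, equation (\ref{eq431}) and Definition \ref{def413}; your family-by-family sign cancellations (scalar $(-1)^{j+b}$ against the sign $(-1)^{j+b+1}$ in part (1), etc.) and the assembly of the nulled $(block,row)$-pairs, including the merging of the two $l$-ranges for the $v_2$-block of $w_{ij}$, are exactly the content of that deduction spelled out. The only material you add beyond the paper is the admissible-ordering argument (family targeting $w_{ij}$ with $3\leq i<j\leq b-2$ first, then $w_{2j}$ and $w_{ib-1}$, then $w_{2b-1}$ in increasing $l$) guaranteeing that the simultaneous formulas of Definition \ref{def413} can be realized as a sequence of elementary row operations with every source read at its $M_2$-value; this well-definedness point is left implicit in the paper, and your verification of it is also correct.
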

We note that from (\ref{eq426}), (\ref{eq4110}), (\ref{eq3313}) and (\ref{eq3314}), we conclude that $\Gamma_lC_2(w_{2b-1},v_h)$ is zero, where $h=1$ or $3\leq h\leq b-1$  and $2\leq l\leq b-3$.

Now, we obtain  matrix $M_3:=D_3M_2=D_3D_2D_1M$, given by Proposition \ref{pro41} for $n=3$. From (\ref{eq41a}) and (\ref{eq41b}) for $n=3$, Corollary \ref{cor414} and (\ref{eq421}), it follows that
\begin{equation}\label{eq432}
A_3(u_i, v_j)=A_1(u_i,v_j), \text{ where } 2\leq i\leq b-1 \text{ and } 1\leq j\leq b,
\end{equation}
and $A_1(u_i,v_j)$ is given by (\ref{eq411}) - (\ref{eq414}), and
\begin{equation}\label{eq433}
B_3(w_i,v_j)=B_2(w_i,v_j), \text{where } 2\leq i \leq b-1 \text{ and } 1\leq j\leq b,	
\end{equation}
and $B_2(w_i,v_j)$ is given by (\ref{eq422}) - (\ref{eq425}).

From (\ref{eq41c}) for $n=3$, (\ref{eq426}), (\ref{eq4110}) - (\ref{eq4114}), (\ref{eq3313}) - (\ref{eq3316}) and Corollary \ref{cor414}, we obtain the following relations :
\begin{equation}\label{eq434}
C_3(w_{ij},v_h)=C(w_{ij}, v_h),
\end{equation}
where $2\leq i<j\leq b-1$ and $h=1$ or $2\leq i<j\leq b-1$ and $3\leq h\leq b-1$ with $h\neq i$ and $h\neq j+1$ or $3\leq i<j=b-1$ and $h=i$ or $2=i<j\leq b-2$ and $h=j+1$, and $C(w_{ij},v_h)$ is given by (\ref{eq3313}), (\ref{eq3314}), (\ref{eq3315}) and (\ref{eq3316}), respectively,
\begin{equation}\label{eq435}
C_3(w_{2j},v_2)=(-1)^{j+1}(E_{j-1}+E_{j-1,j})\in\mathbb{Z}^{(b-2)\times (b-1)}
,\text{ where } 2<j\leq b-2,
\end{equation}
\begin{equation}\label{eq436}
C_3(w_{2b-1},v_2)=(-1)^b(2E_1+E_{b-2}+E_{b-2,b-1})\in\mathbb{Z}^{(b-2)\times(b-1)},
\end{equation}
\begin{equation}\label{eq437}
C_3(w_{ij},v_2)=O_{(b-2)\times(b-1)}, \text{ where } 3\leq i<j\leq b-1, 
\end{equation}
\begin{equation}\label{eq438}
C_3(w_{ij},v_i)=(-1)^{j+1}
\begin{bmatrix}
O_{j-2}& & &\\
& 1& 1&\\
&  &  & 2I_{(b-2)-(j-1)}
\end{bmatrix},
\text{where } 3\leq i<j\leq b-2,
\end{equation}
\begin{equation}\label{eq439}
C_3(w_{ij},v_{j+1})=(-1)^{i+1}
\begin{bmatrix}
2I_{i-2}& & &&\\
& 1& 1&&\\
&  & &2I_{(j-2)-(i-1)} &\\
&&&&O_{(b-2)-(j-2)}
\end{bmatrix},
\end{equation}
where $3\leq i<j\leq b-2$,
\begin{equation}\label{eq4310}
C_3(w_{ij},v_b)=O_{(b-2)\times(b-1)}, \text{ where } 2\leq i<j\leq b-2,  
\end{equation}
\begin{equation}\label{eq4311}
C_3(w_{2b-1},v_b)=-E_1-E_{1,2}-2E_{b-2,b-1}\in\mathbb{Z}^{(b-2)\times(b-1)},\text{ and }
\end{equation}
\begin{equation}\label{eq4312}
 C_3(w_{ib-1},v_b)=(-1)^{i+1}(E_{i-1}+E_{i-1,i})\in\mathbb{Z}^{(b-2)\times(b-1)},\text{ where } 3\leq i<b-1.
\end{equation}
\textbf{Results of $\boldsymbol{D_{3}}$.}
From (\ref{eq433}) and Proposition \ref{pro411}, (\ref{eq434}) - (\ref{eq4312}), and  (\ref{eq3313}) - (\ref{eq3316}), we conclude that matrix $M_3$ has the following properties: 
\begin{corollary}\label{cor415}
Let $2\leq i\leq b-1$ and  $1\leq l\leq b-2$ with $l\neq i-1$. Then
\begin{enumerate}
\item $\Gamma_lB_3(w_i)$ contains a unique non-zero entry equal to $(-1)^{i+1}3$
\item there exists at least one row of the matrix $M_3$ in the blocks of $C_3(w_{rs})$, let us call it $\Gamma(i,l)$, which contains a unique non-zero entry equal to $\pm 2$
\item both the above entries $\left((-1)^{i+1}3\text{ and }\pm2\right)$ are located, if $1\leq l\leq i-2$, at $\Sigma_lM_3(v_i)$, and if $i\leq l\leq b-2$, at $\Sigma_{l+1}M_3(v_{i+1})$.
\end{enumerate}
\end{corollary}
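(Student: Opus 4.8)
The plan is to read Corollary~\ref{cor415} off directly from the explicit block descriptions of $M_3$ assembled in (\ref{eq433})--(\ref{eq4312}), together with (\ref{eq3313})--(\ref{eq3316}) and Proposition~\ref{pro411}. The three assertions split cleanly according to where the two relevant entries live: statement~(1) and the location of the entry $(-1)^{i+1}3$ in statement~(3) concern the $B$-part of $M_3$, while statement~(2) and the location of the entry $\pm2$ in statement~(3) concern the $C$-part.

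First I would dispose of statement~(1). By (\ref{eq433}) we have $B_3(w_i,v_j)=B_2(w_i,v_j)$ for all $i,j$, so $\Gamma_lB_3(w_i)=\Gamma_lB_2(w_i)$ and Proposition~\ref{pro411} applies verbatim: for $2\leq i\leq b-1$ and $1\leq l\leq b-2$ with $l\neq i-1$, the row $\Gamma_lB_3(w_i)$ has a unique non-zero entry $(-1)^{i+1}3$, sitting at $\Sigma_lM_2(v_i)$ when $1\leq l\leq i-2$ and at $\Sigma_{l+1}M_2(v_{i+1})$ when $i\leq l\leq b-2$. By Corollary~\ref{cor414} the operation $D_3$ only nulls certain rows inside the $C$-blocks and performs no column operation and no change to the $B$-rows, so the column indexing of $M_3$ coincides with that of $M_2$ and the very entry $(-1)^{i+1}3$ is unchanged. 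Hence $\Sigma_lM_3(v_i)=\Sigma_lM_2(v_i)$, and the stated location of $(-1)^{i+1}3$ follows. This settles~(1) and the first half of~(3).

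The substantive part is to produce, for each admissible pair $(i,l)$, a row $\Gamma(i,l)$ lying in the $C$-blocks whose only non-zero entry across all column blocks $v_1,\dots,v_b$ equals $\pm2$ and sits in precisely the column found above. My approach is to compile from (\ref{eq434})--(\ref{eq4312}) the catalogue of all $C$-rows carrying a single entry $\pm2$, record the column that entry occupies, and check that these columns exhaust the target columns. The governing observation is that the blocks with second index $b-1$ are the decisive ones: by (\ref{eq434}) and (\ref{eq3315}), for $3\leq i<b-1$ the block $C_3(w_{i,b-1},v_i)$ equals $(-1)^b\left[\begin{smallmatrix}2I_{b-3}& &\\ &1&1\\ & &2I_0\end{smallmatrix}\right]$, so its row $l$ carries the single entry $(-1)^b2$ in column $l$ of the block of $v_i$ for every $1\leq l\leq b-3$. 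Moreover by (\ref{eq437}) the $v_2$-block of this row vanishes and by (\ref{eq4312}) its $v_b$-block is supported only in row $i-1$, which is exactly the index excluded by the hypothesis $l\neq i-1$; all other column blocks vanish by (\ref{eq434}) and (\ref{eq3313})--(\ref{eq3314}). Thus for $3\leq i\leq b-2$ and $1\leq l\leq i-2$ the row $\Gamma(i,l):=\Gamma_lC_3(w_{i,b-1})$ does the job, and a symmetric choice built from $C_3(w_{i+1,b-1},v_{i+1})$ handles the range $i\leq l\leq b-4$ of the second regime.

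What remains — and what I expect to be the main obstacle — is the boundary bookkeeping: the case $i=b-1$ (where $w_{i,b-1}$ does not exist, so the first regime $1\leq l\leq b-3$ at the block of $v_{b-1}$ must be filled differently), together with the columns $l$ close to $b-2$ in the second regime. For these one uses the staircases of $2$'s supplied by (\ref{eq438}), (\ref{eq439}) and (\ref{eq3316}): one selects a block $C_3(w_{p,q},v_h)$ with $h=i$ or $h=j+1$ whose diagonal $2I$-part meets the target column, and one then verifies that the chosen row has no further non-zero entry in any other column block, which is where the vanishing relations (\ref{eq437}), (\ref{eq4310}), (\ref{eq4311}) and the precise case distinctions recorded in (\ref{eq434}) are all needed. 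Since the corollary only asserts existence of one such row $\Gamma(i,l)$, it suffices to partition the admissible pairs $(i,l)$ into these regimes and exhibit a single valid choice in each; the delicate point is confirming the uniqueness of the $\pm2$ entry \emph{across all} column blocks in the awkward corner cases, and this exhaustive verification is the core of the argument.
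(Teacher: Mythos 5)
Your proposal is correct and takes essentially the same approach as the paper: Corollary \ref{cor415} is justified there by nothing more than inspection of (\ref{eq433}), Proposition \ref{pro411} and the block formulas (\ref{eq434})--(\ref{eq4312}) together with (\ref{eq3313})--(\ref{eq3316}), which is precisely your plan, and your two generic choices $\Gamma(i,l)=\Gamma_{l}C_3(w_{ib-1})$ and $\Gamma(i,l)=\Gamma_{l+1}C_3(w_{i+1b-1})$ are valid. The boundary cases you defer close exactly as you predict: for $l\in\{b-3,b-2\}$ in the second regime one may take $\Gamma_{l}C_3(w_{2i})$ when $i\geq 3$ (its $v_{i+1}$-block is the \emph{untruncated} staircase (\ref{eq3316}), by the case $2=i<j\leq b-2$, $h=j+1$ of (\ref{eq434}), so it reaches columns $b-2$ and $b-1$, unlike the truncated (\ref{eq439})) and $\Gamma_{l}C_3(w_{3j})$ with a suitable $4\leq j\leq l$ when $i=2$ (via (\ref{eq438})), while for the first regime with $i=b-1$ one may take $\Gamma_{l-1}C_3(w_{2b-2})$ for $3\leq l\leq b-3$ and $\Gamma_{l}C_3(w_{l+2b-2})$ for $l\leq 2$ (via (\ref{eq439})), the standing hypothesis $b\geq 7$ being what makes these rows available.
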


\subsection{Operation $\boldsymbol{D_4}$}
We consider  matrix $M_3$, which satisfies Corollary \ref{cor415}, and define operation $D_4$ as follows:
\begin{definition}\label{def416}
Let $2\leq i\leq b-1$ and $1\leq l\leq b-2$ with $l\neq i-1$.\\
\textbf{Step $\boldsymbol{1.}$} We add or subtract an appropriate multiple of $\Gamma(i,l)$ from $\Gamma_lB_3(w_i)$ so that the $l-th$ row of the block of  $w_i$  contains a unique non-zero entry equal to $1$, located 
\begin{itemize}
\item if $1\leq l\leq i-2$, at the $l-th$ column of the block of  $v_i$
\item  if $i\leq l\leq b-2$, at the $(l+1)-th$ column of the block of $v_{i+1}$.
\end{itemize}
\textbf{Step $\boldsymbol{2.}$} We consider the rows of the matrix that each contain a non-zero entry in the column in question, excluding the $l-th$ row of the block of $w_i$. We add or subtract an appropriate multiple of the  $l-th$ row of the block of $w_i$ from each of the above rows, to null all the non-zero entries of the column in question, excluding $1$, described in Step $1$.
\end{definition}

\begin{remark}\label{rem417}
According to Definition \ref{def416}, the only impact of operation $D_4$ on matrix $M_3$ is the following : for $2\leq i\leq b-1$ and  $1\leq l\leq b-2$ with $l\neq i-1$, the unique non-zero entry of $\Gamma_lB_3(w_i)$, located, if $1\leq l\leq i-2$, at $\Sigma_l(M_3(v_i))$ and if $i\leq l\leq b-2$, at $\Sigma_{l+1}(M_3 (v_{i+1}))$, is transformed from $(-1)^{i+1}3$ into $1$, and in addition, the remaining non-zero entries of the column in question are zeroed.
\end{remark}

Remark \ref{rem417}, considered in terms of the blocks of $v_j$ and their columns, yields the equivalent
\begin{remark}\label{rem418}
According to Definition \ref{def416}, the only impact of operation $D_4$ on matrix $M_3$ is the following : for $3\leq j\leq b-1$ and  $1\leq t\leq b-1$ with $t\neq j-1$, all the non-zero entries of $\Sigma_t(M_3(v_j))$ are zeroed, excluding
\begin{itemize}
\item $(-1)^{j+1}3$, located at $\Gamma_tB_3(w_j)$, if $1\leq t\leq j-2$
\item $(-1)^{j}3$, located at $\Gamma_{t-1}B_3(w_{j-1})$, if $j\leq t\leq b-1$
\end{itemize}
which is transformed into 1.
\end{remark}

Now, we obtain matrix $M_4:=D_4M_3=D_4\dots D_1M$, given by Proposition \ref{pro41} for $n=4$. From (\ref{eq41a}) for $n=4$, (\ref{eq432}), (\ref{eq411}) - (\ref{eq414}), (\ref{eq336}) - (\ref{eq338}) and Remark \ref{rem418}, we obtain the following relations:
\begin{equation}\label{eq441}
A_4(u_i,v_j)=A(u_i,v_j),
\end{equation}  
where $2\leq i\leq b-1$ and $j=1$ or $3\leq j\leq b-1$ with $j\neq i+1$ or $j=i+1=b$, and $A(u_i,v_j)$ is given by (\ref{eq336}), (\ref{eq337}) and (\ref{eq338}), respectively,
\begin{equation}\label{eq442}
A_4(u_i,v_j)=A_1(u_i,v_j),\text{ where } 2\leq i\leq b-1 \text{ and } j=2 \text{ or } j=b\neq i+1,	
\end{equation}  
and $A_1(u_i,v_j)$ is given by (\ref{eq412}) and (\ref{eq413}), respectively, and
\begin{equation}\label{eq443} 
A_4(u_i,v_{i+1})=
\begin{bmatrix}
O_{1\times(i-1)}\quad-2\quad\quad O_{1\times(b-1-i)}
\end{bmatrix}, 
\text{where } 2\leq i\leq b-2.
\end{equation}

From (\ref{eq41b}) for $n=4$, (\ref{eq433}), (\ref{eq422}) - (\ref{eq425}), (\ref{eq339}) - (\ref{eq3312}), (\ref{eq416}) - (\ref{eq419}) and Remark \ref{rem418}, we obtain the following relations:
\begin{equation}\label{eq444}
B_4(w_i,v_j)=B(w_i,v_j),
\end{equation} 
where $2\leq i\leq b-1$ and $j=1$ or $3\leq j \leq b-1$ with $j\neq i$ and $j\neq i+1$, and $B(w_i,v_j)$ is given by (\ref{eq339}) and (\ref{eq3310}), respectively,
\begin{equation}\label{eq445}
B_4(w_i,v_j)=B_1(w_i,v_j),\text{ where } 2\leq i\leq b-1 \text{ and } j=2 \text{ or } j=b,
\end{equation} 
and $B_1(w_i,v_j)$ is given by (\ref{eq416}) - (\ref{eq417}) and (\ref{eq418}) - (\ref{eq419}), respectively,
\begin{equation}\label{eq446}
B_4(w_i,v_i)=
\begin{bmatrix}
I_{i-2}& & &\\
&(-1)^{i+1}2 &0 &\\
& & & O_{(b-2)-(i-1)}
\end{bmatrix},
\text{ where } 3\leq i\leq b-1, \text{and}
\end{equation}
\begin{equation}\label{eq447}
B_4(w_i,v_{i+1})=
\begin{bmatrix}
O_{i-2}& & &\\
&0 &(-1)^{i+1}2 &\\
& & & I_{(b-2)-(i-1)}
\end{bmatrix},
\text{ where } 2\leq i\leq b-2.
\end{equation}

From (\ref{eq41c}) for $n=4$, (\ref{eq434}) - (\ref{eq4312}), (\ref{eq3313}) - (\ref{eq3316}) and Remark \ref{rem418}, we obtain the following relations:
\begin{equation}\label{eq448}
C_4(w_{ij},v_h)=C(w_{ij},v_h),
\end{equation} 
where $2\leq i<j\leq b-1$ and $h=1$ or $3\leq h \leq b-1$ with $h\neq i$ and $h\neq j+1$, and $C(w_{ij},v_h)$ is given by  (\ref{eq3313}) and (\ref{eq3314}), respectively, 
\begin{equation}\label{eq449}
C_4(w_{ij},v_h)=C_3 (w_{ij},v_h), \text{ where } 2\leq i<j\leq b-1 \text{ and }  h=2 \text{ or } h=b,	
\end{equation}
 and $C_3(w_{ij},v_h)$ is given by (\ref{eq435}) - (\ref{eq437}) and (\ref{eq4310}) - (\ref{eq4312}), respectively,
\begin{equation}\label{eq4410}
C_4(w_{ib-1},v_i)=(-1)^b2E_{i-1}\in \mathbb{Z}^{(b-2)\times(b-1)}, \text{ where }  3\leq i<b-1,
\end{equation}
\begin{equation}\label{eq4411}
C_4(w_{2j},v_{j+1})=-2E_{j-1,j}\in \mathbb{Z}^{(b-2)\times(b-1)}, \text{ where } 2<j\leq b-2, \text{ and }
\end{equation}
\begin{equation}\label{eq4412}
C_4(w_{ij},v_h)=O_{(b-2)\times(b-1)},\text{where } 3\leq i<j\leq b-2 \text{ and } h=i \text{ or } h=j+1.
\end{equation}
\textbf{Result of $\boldsymbol{D_4}$.} Remark \ref{rem417} and the definitions of $M_4$ and $B_4$ yield the following
\begin{corollary}\label{cor419}
For $2\leq i\leq b-1$ and $1\leq l\leq b-2$ with $l\neq i-1$,  $\Gamma_lB_4(w_i)$ contains a unique non-zero entry equal to $1$, located, if $1\leq l\leq i-2$, at $\Sigma_l(M_4(v_i))$, if $i\leq l\leq b-2$, at $\Sigma_{l+1}(M_4(v_{i+1}))$, and this is the unique non-zero entry of the corresponding column.
\end{corollary}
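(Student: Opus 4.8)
The plan is to obtain both assertions of the corollary by reading off the explicit block descriptions of $B_4$ produced by operation $D_4$, namely (\ref{eq444})--(\ref{eq447}), in combination with the structural statement of Remark \ref{rem417}. Since $B_4(w_i)$ is the horizontal block $[B_4(w_i,v_1)\ \cdots\ B_4(w_i,v_b)]$, locating the nonzero entries of its $l$-th row amounts to inspecting the $l$-th row of each constituent block $B_4(w_i,v_j)$ and checking that exactly one of them is nonzero and equal to $1$.

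First I would split into the two ranges of the statement. For $1\le l\le i-2$ I would invoke (\ref{eq446}): the block $B_4(w_i,v_i)$ has $I_{i-2}$ in its top-left corner, so its $l$-th row carries the single entry $1$ in column $l$, that is, at $\Sigma_l(M_4(v_i))$. For $i\le l\le b-2$ I would instead use (\ref{eq447}): the block $B_4(w_i,v_{i+1})$ has $I_{(b-2)-(i-1)}$ in its bottom-right corner, whose $l$-th row carries the single entry $1$ in column $l+1$, that is, at $\Sigma_{l+1}(M_4(v_{i+1}))$. This yields the claimed value and location of the nonzero entry in each range.

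Next I would confirm that no other block $B_4(w_i,v_j)$ contributes to the $l$-th row, so that the entry just found is unique within $\Gamma_lB_4(w_i)$. By (\ref{eq444}) the blocks $v_1$ and $v_j$ with $3\le j\le b-1$, $j\ne i,i+1$, vanish identically through (\ref{eq339}) and (\ref{eq3310}). By (\ref{eq445}) the boundary blocks $v_2$ and $v_b$ equal $B_1(w_i,v_2)$ and $B_1(w_i,v_b)$, which by (\ref{eq416})--(\ref{eq419}) are supported only in row $i-1$; as $l\ne i-1$, their $l$-th rows are zero. Finally the complementary diagonal block---$B_4(w_i,v_{i+1})$ when $1\le l\le i-2$, and $B_4(w_i,v_i)$ when $i\le l\le b-2$---has its $l$-th row inside the zero corner $O_{i-2}$ of (\ref{eq447}), respectively $O_{(b-2)-(i-1)}$ of (\ref{eq446}), and so contributes nothing.

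The second assertion, that this $1$ is the only nonzero entry of its column in the whole of $M_4$, is precisely the content of Step $2$ of $D_4$ as recorded in Remark \ref{rem417}, which asserts that the remaining nonzero entries of the column in question have been zeroed. There is no genuine obstacle here: the only care needed is the case split on $l$ relative to $i$ and the two boundary values $i=2$ and $i=b-1$, for which one of the ranges is empty and the single special rows (row $i-1$) of the $v_2$- and $v_b$-blocks must be seen to lie outside the admissible range of $l$. This is routine bookkeeping once the block formulas (\ref{eq444})--(\ref{eq447}) are in hand.
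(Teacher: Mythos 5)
Your proposal is correct and follows essentially the same route as the paper: the paper's one-line proof invokes Remark \ref{rem417} together with "the definitions of $M_4$ and $B_4$", which is exactly what you spell out by reading the $l$-th rows off the block formulas (\ref{eq444})--(\ref{eq447}) and citing Remark \ref{rem417} for the column uniqueness. Your extra care with the empty ranges at $i=2$ and $i=b-1$ and with the row-$(i-1)$ support of the $v_2$- and $v_b$-blocks is just an explicit rendering of the bookkeeping the paper leaves implicit.
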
     
In other words, there are $(b-2)(b-3)$ entries of matrix $M_4$, each of which is equal to $1$ and constitutes the unique non-zero entry of the corresponding row and column.
\subsection{Operation $\boldsymbol{D_5}$}We consider matrix $M_4$ and focus on the block $B_4(w_i)$, where  $2\leq i\leq b-1$,
given by  (\ref{eq444}) - (\ref{eq447}). We already know that Corollary \ref{cor419} holds and observe that $\Gamma_{i-1}B_4(w_i)$ is non-zero for $2\leq i\leq b-1$. More specifically,
\begin{equation}\label{eq451}
\Gamma_{i-1}B_4(w_i,v_j)\neq O_{1\times (b-1)},
\end{equation} 
where $2\leq i\leq b-1$ and $j=2$ or $j=i\neq2$ or $j=i+1\neq b$ or $j=b$, while the remaining blocks of $\Gamma_{i-1}B_4(w_i)$ are null.
By operation $D_5$, we will null $\Gamma_{i-1}B_4(w_i)$, where $2\leq i\leq b-1$. We easily verify that
\begin{align}\label{eq452}
&\Gamma_{i-1}B_4(w_i)+\Gamma_iB_4(w_{i+1})+ (-1)^{i}\Gamma_iC_4(w_{2i+1}) +(-1)^{b+i}\Gamma_{i-1}C_4(w_{ib-1})\\
&\nonumber=O_{1\times\left[1+(b-1)^2\right]}, \text{where } 2\leq i \leq b-2, \text{ and}\\
\label{eq453}
&\Gamma_{b-2}B_4(w_{b-1})+(-1)^bA_4(u_{b-2})+(-1)^bA_4(u_{b-1})=O_{1\times\left[1+(b-1)^2\right]}.
\end{align}

\begin{definition}\label{def420}
We define operation $D_5$ as
\begin{equation*}
D_5:=\{D_5(w_i): 2\leq i\leq b-1\},
\end{equation*}
where
\begin{align*}
&D_5(w_i):\Gamma_{i-1}B_4(w_i)+ \Gamma_iB_4(w_{i+1})+(-1)^i\Gamma_iC_4 (w_{2i+1})+(-1)^{b+i}\Gamma_{i-1}C_4(w_{ib-1}),
\\
&\text{if } i\neq b-1,\text{ and } D_5(w_{b-1}):\Gamma_{b-2}B_4 (w_{b-1})+(-1)^bA_4(u_{b-2})+(-1)^bA_4(u_{b-1}).
\end{align*}
\end{definition}

From (4.5.1) - (4.5.3) and Definition \ref{def420}, we obtain the following
\begin{corollary}\label{cor421}
The unique change caused by applying operation $D_5$ to matrix $M_4$ is the nulling of     
$\Gamma_{i-1}B_4(w_i)$, where $2\leq i\leq b-1$. More specifically, $\Gamma_{i-1}B_4(w_i,v_j)$ is zeroed, where  $2\leq i\leq b-1$ and $j=2$ or $j=i\neq 2$ or $j=i+1\neq b$ or $j=b$.
\end{corollary}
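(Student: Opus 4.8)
The plan is to read Corollary~\ref{cor421} off Definition~\ref{def420} together with the vanishing identities (\ref{eq452}) and (\ref{eq453}), which I take as given. By Definition~\ref{def420}, for $2\le i\le b-2$ the operation $D_5(w_i)$ fixes every row of $M_4$ except the target $\Gamma_{i-1}B_4(w_i)$, to which it adds
\[
\Gamma_iB_4(w_{i+1})+(-1)^i\Gamma_iC_4(w_{2i+1})+(-1)^{b+i}\Gamma_{i-1}C_4(w_{ib-1}),
\]
while $D_5(w_{b-1})$ adds $(-1)^bA_4(u_{b-2})+(-1)^bA_4(u_{b-1})$ to $\Gamma_{b-2}B_4(w_{b-1})$. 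In either case the new target row is precisely the left-hand side of (\ref{eq452}) (resp.\ (\ref{eq453})), hence the zero row. Thus, granting the two identities, each $D_5(w_i)$ nulls its target, and the location of the zeroed sub-blocks $\Gamma_{i-1}B_4(w_i,v_j)$ for $j=2,\,i,\,i+1,\,b$ is read off from (\ref{eq451}).

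First I would check that each $D_5(w_i)$ is a legitimate integral row operation, i.e.\ that it adds multiples of rows \emph{distinct} from its own target. This is clear, since the added rows live in the blocks $w_{i+1},w_{2i+1},w_{ib-1}$ (or, for $i=b-1$, in the $u$-blocks), none of which is the block $w_i$ carrying the target. Hence the diagonal entry of the associated elementary transformation at the target is $1$, the transformation is invertible over $\mathbb{Z}$, and the invariant factors of $M_4$ are preserved.

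The genuine subtlety---and the step I expect to be the main obstacle---is that the $D_5(w_i)$ are \emph{not} mutually independent: the source row $\Gamma_iB_4(w_{i+1})$ used by $D_5(w_i)$ is exactly the target $\Gamma_{(i+1)-1}B_4(w_{i+1})$ of $D_5(w_{i+1})$, so one must argue that the whole family still zeroes every target on the \emph{original} rows of $M_4$. I would resolve this by noting that the remaining sources---the $C$-block rows $\Gamma_iC_4(w_{2i+1}),\Gamma_{i-1}C_4(w_{ib-1})$ and the $A$-block rows $A_4(u_{b-2}),A_4(u_{b-1})$---are never targets, so the dependence among the $b-2$ targets is the single acyclic chain $\Gamma_{i-1}B_4(w_i)\leftarrow\Gamma_iB_4(w_{i+1})$. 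Writing $D_5$ as left multiplication by the integer matrix $P$ that performs all the $D_5(w_i)$ on the original rows, $P$ has $1$'s on the diagonal and off-diagonal entries only in the target rows; ordering the targets by increasing $i$ and placing the $C$- and $A$-source rows afterwards makes $P$ upper unitriangular, so $\det P=1$ and $P$ is invertible over $\mathbb{Z}$.

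With $P$ in hand the corollary is immediate. Each non-target row of $P$ is a standard basis row, so those rows of $PM_4$ agree with $M_4$; and for every $2\le i\le b-1$ the target row of $PM_4$ equals the left-hand side of (\ref{eq452}) or (\ref{eq453}) on the original rows, hence vanishes. Equivalently, applying the $D_5(w_i)$ in the order $i=2,3,\dots,b-1$ works verbatim as elementary operations, because at step $i$ both $\Gamma_{i-1}B_4(w_i)$ and $\Gamma_iB_4(w_{i+1})$ are still in their $M_4$-state---the former being changed only by $D_5(w_i)$ itself, the latter only at the later step $i+1$. This yields exactly the nulling of $\Gamma_{i-1}B_4(w_i)$ asserted in the statement, with no other entry of $M_4$ disturbed.
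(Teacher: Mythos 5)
Your proposal is correct and follows essentially the same route as the paper: the paper derives Corollary~\ref{cor421} directly from Definition~\ref{def420} together with the identities (\ref{eq451})--(\ref{eq453}), exactly as you do, with the nonzero sub-blocks $\Gamma_{i-1}B_4(w_i,v_j)$ identified via (\ref{eq451}). Your additional verification that the family $\{D_5(w_i)\}$ is realized by an invertible (unitriangular) integer matrix---equivalently, that applying the operations in increasing order of $i$ leaves every source row in its $M_4$-state---is a point the paper leaves implicit, and it is a worthwhile clarification rather than a different argument.
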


Now, we obtain matrix $M_5:=D_5M_4=D_5\dots D_1M$, given by Proposition \ref{pro41} for $n=5$. From (\ref{eq41a}) for $n=5$ and Corollary \ref{cor421}, we conclude that
\begin{equation}\label{eq454}
A_5(u_i,v_j)=A_4(u_i,v_j), \text{ where } 2\leq i\leq b-1 \text{ and } 1\leq j\leq b,
\end{equation}  
and $A_4(u_i,v_j)$ is given by (\ref{eq441}) - (\ref{eq443}).   

From (\ref{eq41b}) for $n=5$, (\ref{eq444}) - (\ref{eq447}), (\ref{eq339}), (\ref{eq3310}), (\ref{eq416}) - (\ref{eq419}) and Corollary \ref{cor421}, we obtain the following relations:  
\begin{equation}\label{eq455}
B_5(w_i,v_1)=O_{(b-2)\times1} \text{ and } B_5(w_i,v_j)=O_{(b-2)\times(b-1)},
\end{equation}
where $2\leq i\leq b-1$ and $j=2$ or $3\leq j\leq b-1$ with $j\neq i$ and $j\neq i+1$ or $j=b$,
\begin{equation}\label{eq456}
B_5(w_i,v_i)=
\begin{bmatrix}
I_{i-2}&\\
& O_{(b-i)\times(b-i+1)}
\end{bmatrix}, 
\text{where } 3\leq i\leq b-1, \text{and}
\end{equation}
\begin{equation}\label{eq457}
B_5(w_i,v_{i+1})=
\begin{bmatrix}
O_{(i-1)\times i}&\\
& I_{(b-2)-(i-1)}
\end{bmatrix}, 
\text{ where } 2\leq i\leq b-2.
\end{equation}

From (\ref{eq41c}) for $n=5$ and Corollary \ref{cor421}, it follows that
\begin{equation}\label{eq458}
C_5(w_{ij},v_h)=C_4(w_{ij},v_h),\text{ where } 2\leq i<j\leq b-1 \text{ and } 1\leq h\leq b,
\end{equation} 
and $C_4(w_{ij},v_h)$ is given by (\ref{eq448}) - (\ref{eq4412}).\\
\textbf{Result of $\boldsymbol{D_5}$.} From (\ref{eq41b}) for $n=5$ and Corollary \ref{cor421}, we have that  $\Gamma_lB_5(w_i)=\Gamma_lB_4(w_i)$ and $\Gamma_{i-1}B_5(w_i)=O_{1\times\left[1+(b-1)^2\right]}$, where $2\leq i\leq b-1$ and $1\leq l\leq b-2$ with $l\neq i-1$. Considering Corollary \ref{cor419} as well, we obtain

\begin{proposition}\label{pro422}
For $2\leq i\leq b-1$ and $1\leq l\leq b-2$ with $l\neq i-1$,  $\Gamma_lB_5(w_i)$ contains a unique non-zero entry equal to $1$, located, if $1\leq l\leq i-2$, at $\Sigma_l(M_5(v_i))$, if $i\leq l\leq b-2$, at $\Sigma_{l+1}(M_5(v_{i+1}))$, and this is the unique non-zero entry of the corresponding column, while   $\Gamma_{i-1}B_5(w_i)=O_{1\times\left[1+(b-1)^2\right]}$.  
\end{proposition}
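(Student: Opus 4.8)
The plan is to obtain Proposition~\ref{pro422} directly from the global description of the row operation $D_5$ in Corollary~\ref{cor421}, combined with the block description of $M_4$ in Corollary~\ref{cor419}; no further computation with the maps $d_2(\lambda)$ is needed, because $D_5$ is nothing more than a sequence of integer row operations performed on $M_4$.

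First I would record the row-by-row effect of $D_5$. By Corollary~\ref{cor421} the sole change produced by $D_5$ is the nulling of the rows $\Gamma_{i-1}B_4(w_i)$ for $2\leq i\leq b-1$, every other row of $M_4$ being left untouched. Hence for $2\leq i\leq b-1$ and $1\leq l\leq b-2$ with $l\neq i-1$ one has $\Gamma_lB_5(w_i)=\Gamma_lB_4(w_i)$, while $\Gamma_{i-1}B_5(w_i)=O_{1\times\left[1+(b-1)^2\right]}$. The same two equalities can also be read off the explicit blocks in (\ref{eq455}) - (\ref{eq457}): the rows $1,\dots,i-2$ of $B_5(w_i,v_i)$ are the rows of $I_{i-2}$, the rows $i,\dots,b-2$ of $B_5(w_i,v_{i+1})$ are the rows of the displayed identity block, and the $(i-1)$-st row vanishes in every block of $w_i$.

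Next, for a fixed $l\neq i-1$ I would invoke Corollary~\ref{cor419} for the row $\Gamma_lB_4(w_i)=\Gamma_lB_5(w_i)$: it carries a unique non-zero entry equal to $1$, located in $\Sigma_l(M_4(v_i))$ when $1\leq l\leq i-2$ and in $\Sigma_{l+1}(M_4(v_{i+1}))$ when $i\leq l\leq b-2$. Since $D_5$ consists of row operations and therefore does not touch the column indexing, this entry sits in the column $\Sigma_l(M_5(v_i))$, respectively $\Sigma_{l+1}(M_5(v_{i+1}))$, of $M_5$. Together with $\Gamma_{i-1}B_5(w_i)=O$ from the previous step, this establishes every assertion of Proposition~\ref{pro422} except the column-uniqueness claim.

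The column-uniqueness is the only point that genuinely requires care, since it is a statement about the whole of $M_5$ rather than about the block $B_5(w_i)$ in isolation, and I would argue it separately. Fix $i,l$ with $l\neq i-1$ and let $\Sigma$ be the column named in the statement. By Corollary~\ref{cor419}, in $M_4$ this column has a single non-zero entry, namely the $1$ at $\Gamma_lB_4(w_i)$. By Corollary~\ref{cor421} the passage from $M_4$ to $M_5$ only nulls the rows $\Gamma_{i'-1}B_4(w_{i'})$ and alters nothing else, so it can only delete non-zero entries from a column, never create them; moreover the surviving entry at $\Gamma_lB_4(w_i)$ lies in a row that is \emph{not} nulled, precisely because $l\neq i-1$. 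Therefore this entry persists in $M_5$ and remains the unique non-zero entry of $\Sigma$, which completes the proof of Proposition~\ref{pro422}.
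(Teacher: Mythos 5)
Your proposal is correct and follows essentially the same route as the paper: the paper likewise combines Corollary~\ref{cor421} (the only change under $D_5$ is the nulling of the rows $\Gamma_{i-1}B_4(w_i)$, so $\Gamma_lB_5(w_i)=\Gamma_lB_4(w_i)$ for $l\neq i-1$ and $\Gamma_{i-1}B_5(w_i)=O_{1\times\left[1+(b-1)^2\right]}$) with Corollary~\ref{cor419} (row- and column-uniqueness of the entry $1$ in $M_4$) to conclude. Your added observation that row operations which only null rows cannot create new non-zero entries in a column is exactly the point left implicit in the paper's two-line deduction.
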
 
Proposition \ref{pro422} implies that the blocks of $w_{i}$, where $2\leq i\leq b-1$, are completely simplified.

\subsection{Operation $\boldsymbol{D_6}$}We consider matrix $M_5$ and notice that for $2\leq i\leq b-3$, the block $A_5(u_i)$,    
given by (\ref{eq454}), is non-zero. More specifically,
\begin{align}\label{eq461}
&\text{for } 2\leq i\leq b-3, \text{ row }  A_5(u_i,v_j) \text{ is non-zero, where } j=1 \text{ or } j=2 \text{ or } j=\\
&\nonumber i+1 \text{ or } j=b, \text{ while the remaining blocks of }  A_5(u_i) \text{ are null.} 
\end{align}

By operation $D_6$, we will null $A_5(u_i)$, where $2\leq i\leq b-3$. We easily verify that
\begin{equation}\label{eq462}
A_5(u_i)-A_5(u_{i+2})+ \Gamma_{i+1}C_5(w_{2i+2})+ (-1)^b\Gamma_iC_5(w_{i+1b-1})=O_{1\times\left[ 1+(b-1)^2\right]},
\end{equation}
where $2\leq i\leq b-3$.

Now, we consider the block  $A_5(u_{b-1})$, also given by (\ref{eq454}). In the following, we will perform the appropriate row operations to null the units of matrix $A_5(u_{b-1},v_b)=
\begin{bmatrix}
a+1&-1&\ldots&-1&-2
\end{bmatrix}
\in\mathbb{Z}^{1\times(b-1)}$. This will facilitate us in the future operation  $D_9$.  We consider the rows $\Gamma_{i-1}C_5(w_{ib-1})$, where $2\leq i\leq b-2$ and $i\equiv b\text{mod}2$, given by (\ref{eq458}), and we sum them. We easily verify that
\begin{equation}\label{eq463}
A_5(u_{b-1},v_1)+(-1)^{b-1}\sum\limits_{\substack{i=2\\[2pt]i\equiv b\text{mod}2}}^{b-2}\Gamma_{i-1}C_5(w_{ib-1}, v_1)=(-1)^b[2]\in\mathbb{Z}^{1\times 1},
\end{equation}
\begin{align}\label{eq464}
&\text{for } 2\leq j\leq b-1,\text{ it holds }\\
\nonumber
A_5(u&_{b-1},v_j)+(-1)^{b-1}\sum\limits_{\substack{i=2\\[2pt]i\equiv b\text{mod}2}}^{b-2}
\Gamma_{i-1}C_5(w_{ib-1}, v_j)=
\begin{bmatrix}
O_{1\times(j-2)}\quad k\quad O_{1\times(b-j)}
\end{bmatrix},
\end{align} 
where $k=-2$, if $j\equiv b\text{mod}2$, and
$k=0$, if $j\not\equiv b\text{mod2}$, and
\begin{equation}\label{eq465}
A_5(u_{b-1},v_b)+(-1)^{b-1}\sum\limits_{\substack{i=2\\[2pt]i\equiv b\text{mod}2}}^{b-2}
\Gamma_{i-1}C_5(w_{ib-1},v_b)=
\begin{bmatrix}
\omega\quad O_{1\times(b-3)}\quad-2
\end{bmatrix},
\end{equation}
where $\omega=a+2$, if $b$ is even, and $\omega =a+1$, if $b$ is odd.

According to (\ref{eq465}), we can achieve the nulling of the units of matrix $A_5(u_{b-1}, v_b)$ by applying the operation   $A_5(u_{b-1})+(-1)^{b-1}\sum\limits_{\substack{i=2\\[2pt]i\equiv b\text{mod}2}}^{b-2}
\Gamma_{i-1}C_5(w_{ib-1})$.

\begin{definition}\label{def423}
We define operation $D_6$ as
\begin{equation*}
D_6:=\{D_6(u_i): 2\leq i\leq b-3 \text{ or } i=b-1\}
\end{equation*}
where
\begin{align*}
&D_6(u_i):A_5(u_i)-A_5(u_{i+2})+\Gamma_{i+1}C_5(w_{2i+2})+(-1)^b\Gamma_iC_5(w_{i+1b-1}),\\
&\text{where } 2\leq i\leq b-3, \text{ and }
D_6(u_{b-1}):A_5(u_{b-1})+(-1)^{b-1}\sum\limits_{\substack{i=2\\[2pt]i\equiv b\text{mod}2}}^{b-2}\Gamma_{i-1}C_5(w_{ib-1}).
\end{align*}
\end{definition}

From Definition \ref{def423}, (\ref{eq462}) and (\ref{eq461}), we obtain the following 
\begin{corollary}\label{cor424}
The only changes caused by applying operation $D_6$ to matrix $M_5$ are the nulling of  $A_5(u_i,v_j)$, where  $2\leq i\leq b-3$ and $j=1$ or $j=2$ or $j=i+1$ or $j=b$, and the transformation of $A_5(u_{b-1})$  as described by $\mathrm{(\ref{eq463})}$ - $\mathrm{(\ref{eq465})}$.
\end{corollary}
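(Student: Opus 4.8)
The plan is to deduce the corollary directly from the matrix form of $D_6$ together with the four identities already recorded, reducing the whole statement to bookkeeping plus verification of those identities. First I would fix the convention under which $M_6=D_6M_5$ is a left multiplication by a matrix $P=I+N$: then the $k$-th row of $M_6$ is the $k$-th row of $M_5$ plus $\sum_j N_{kj}$ times the $j$-th row of $M_5$, so each targeted row is recomputed from the \emph{original} rows of $M_5$ and every non-targeted row is left untouched. By Definition \ref{def423}, the only rows in which $N$ has nonzero off-diagonal entries are the $A$-block rows $A_5(u_i)$ with $2\leq i\leq b-3$ together with the row $A_5(u_{b-1})$; this already shows that no other row of $M_5$ can change and pins down the scope of the corollary.

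Next I would evaluate each targeted row. For $2\leq i\leq b-3$, the operation $D_6(u_i)$ replaces $A_5(u_i)$ by exactly the left-hand side of (\ref{eq462}), which is the zero row; combined with (\ref{eq461}), which asserts that the original $A_5(u_i)$ is supported only on the blocks $v_1,v_2,v_{i+1},v_b$, this shows that the sole effect is the nulling of $A_5(u_i,v_j)$ for $j\in\{1,2,i+1,b\}$, as claimed. For the remaining target, the operation $D_6(u_{b-1})$ adds to $A_5(u_{b-1})$ the prescribed combination of the rows $\Gamma_{i-1}C_5(w_{ib-1})$, and its blockwise effect is precisely the content of (\ref{eq463})--(\ref{eq465}); reading those three equations off block by block yields the stated transformation of $A_5(u_{b-1})$.

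The one point requiring care is that no collateral change sneaks in, and this is where I would be explicit. The source rows feeding the $D_6(u_i)$, namely $A_5(u_{i+2})$, $\Gamma_{i+1}C_5(w_{2i+2})$ and $\Gamma_iC_5(w_{i+1b-1})$, include $A$-block rows that are themselves targets of other operations, and the rows $\Gamma_{i-1}C_5(w_{ib-1})$ feeding $D_6(u_{b-1})$ are reused across several operations; under the simultaneous left-multiplication convention this is harmless, since every combination in (\ref{eq462})--(\ref{eq465}) is evaluated against the fixed rows of $M_5$, and I would note that all $C$-block source rows are never targeted while the $A$-block sources enter only with their $M_5$-values. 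The main obstacle is therefore not the tracking of which rows move but the verification of the identities (\ref{eq462})--(\ref{eq465}) themselves, which must be checked column by column after substituting the explicit block forms (\ref{eq454})--(\ref{eq458}); it is the telescoping cancellation in (\ref{eq462}) and the parity-dependent collapse that produces $\omega=a+2$ or $a+1$ in (\ref{eq465}) that carry the real computational weight.
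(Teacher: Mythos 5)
Your proposal is correct and follows essentially the same route as the paper, which derives the corollary directly from Definition \ref{def423} together with (\ref{eq461}), (\ref{eq462}) and (\ref{eq463})--(\ref{eq465}): the operations $D_6(u_i)$ replace each $A_5(u_i)$, $2\leq i\leq b-3$, by the zero row of (\ref{eq462}), the operation $D_6(u_{b-1})$ produces exactly the blocks recorded in (\ref{eq463})--(\ref{eq465}), and no other row is touched. Your explicit remark that all combinations are evaluated simultaneously against the fixed rows of $M_5$ (so that targets such as $A_5(u_{i+2})$ being reused as sources causes no interference) is a point the paper leaves implicit in its $A_5$/$C_5$ notation, but it is the same argument.
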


Now, we obtain matrix $M_6:=D_6M_5=D_6\dots D_1M$, given by Proposition \ref{pro41} for $n=6$. From (\ref{eq41a}) for $n=6$, (\ref{eq454}), Corollary \ref{cor424} and (\ref{eq441}) - (\ref{eq443}), we obtain the following relations:
\begin{equation}\label{eq466}
A_6(u_i,v_1)=O_{1\times1}  \text{ and }  A_6(u_i,v_j)=O_{1\times(b-1)}, \text{where } 2\leq i\leq b-3 \text{ and }  2\leq j\leq b,
\end{equation}
\begin{equation}\label{eq467}
A_6(u_{b-2},v_j)=A(u_{b-2},v_j), \text{ where } j=1 \text{ or } 3\leq j\leq b-2,
\end{equation}
and $A(u_{b-2},v_j)$ is given by (\ref{eq336}) and (\ref{eq337}), respectively,
\begin{equation}\label{eq468}
A_6(u_{b-2},v_j)=A_1(u_{b-2},v_j), \text{ where } j=2 \text{ or } j=b,	
\end{equation}
and $A_1(u_{b-2},v_j)$ is given by (\ref{eq412}) and (\ref{eq413}), respectively, and
\begin{equation}\label{eq469}
A_6(u_{b-2},v_{b-1})=A_4(u_{b-2},v_{b-1}), \text{where } A_4(u_{b-2},v_{b-1}) \text{ is given by } (\ref{eq443}).	
\end{equation}

From (\ref{eq41a}) for $n=6$, Definition \ref{def423}  and  (\ref{eq463}) - (\ref{eq465}), it follows that
\begin{equation}\label{eq4610}
A_6(u_{b-1},v_1)=(-1)^b[2]\in\mathbb{Z}^{1\times1},
\end{equation}
\begin{equation}\label{eq4611}
A_6(u_{b-1},v_j)=
\begin{bmatrix}
O_{1\times(j-2)}\quad k\quad O_{1\times(b-j)}
\end{bmatrix},
\end{equation} 
\text{where } $2\leq j\leq b-1$, $k=-2$, if $j\equiv b\text{mod}2$, and $k=0$, if $j \not\equiv b\text{mod2}$, and
\begin{equation}\label{eq4612}
A_6(u_{b-1},v_b)=
\begin{bmatrix}
\omega\quad O_{1\times(b-3)}\quad-2
\end{bmatrix},
\end{equation}
where $\omega=a+2$, if  $b$ is even, and   $\omega=a+1$, if $b$ is odd.

From (\ref{eq41b}) and (\ref{eq41c}) for $n=6$, Corollary \ref{cor424} and (\ref{eq458}), we conclude that
\begin{equation}\label{eq4613}
B_6(w_i,v_j)=B_5(w_i,v_j), \text{ where } 2\leq i\leq b-1 \text{ and } 1\leq j\leq b, \end{equation}
and $B_5(w_i,v_j)$ is given by (\ref{eq455}) - (\ref{eq457}), and 
\begin{equation}\label{eq4614}
C_6(w_{ij},v_h)=C_4(w_{ij},v_h), \text{ where } 2\leq i<j\leq b-1 \text{ and } 1\leq h\leq b, 
\end{equation}
and $C_4(w_{ij},v_h)$ is given by (\ref{eq448}) - (\ref{eq4412}).\\
\textbf{Results of $\boldsymbol{D_{6}}$.} Relation (\ref{eq466}) yields
\begin{equation}\label{eq4615}
A_6(u_i)=O_{1\times\left[1+(b-1)^2\right]}, \text{where } 2\leq i\leq b-3.
\end{equation}
Furthermore, (\ref{eq4612}) will facilitate us in the application of operation $D_9$.

\subsection{Operation $\boldsymbol{D_7}$}
We consider matrix $M_6$ and  focus on $\Gamma_{j-1}C_6(w_{2j})$, where $3\leq j\leq b-1$, given by (\ref{eq4614}), (\ref{eq448}), (\ref{eq3313}), (\ref{eq3314}), (\ref{eq449}), (\ref{eq435}), (\ref{eq436}), (\ref{eq4310}), (\ref{eq4311}) and (\ref{eq4411}). 

\begin{remark}\label{rem425}
In matrix $M_6$, we observe that
for $3\leq j\leq b-1$,  $\Gamma_{j-1}C_6(w_{2j})$ contains exactly three non-zero entries, 
$(-1)^{j+1}$, $(-1)^{j+1}$ and $-2$, located at $\Sigma_{j-1}\left(M_6(v_2)\right)$, $\Sigma_j\left(M_6(v_2)\right)$ and     
$\Sigma_j\left(M_6(v_{j+1})\right)$, \text{respectively}.
\end{remark}

For $3\leq j\leq b-1$,  we will null the entries $(-1)^{j+1}$ and $-2$ of $\Gamma_{j-1}C_6(w_{2j})$, 
located at $\Sigma_j\left(M_6(v_2)\right)$ and $\Sigma_j\left(M_6(v_{j+1})\right)$, respectively.

Now, we focus on the block  $M_6(v_2)$, given by (\ref{eq466}), (\ref{eq468}), (\ref{eq412}), (\ref{eq4611}), (\ref{eq4613}), (\ref{eq455}), (\ref{eq4614}), (\ref{eq449}) and (\ref{eq435}) - (\ref{eq437}). 

\begin{remark}\label{rem426}
In the block $M_6(v_2)$, we notice the following:
\begin{enumerate}
\item $\Sigma_2\left(M_6(v_2)\right)$ contains a unique non-zero entry equal to 1, located at $\Gamma_2C_6(w_{23})$,
\item for $3\leq j\leq b-2$, $\Sigma_j\left(M_6(v_2)\right)$ contains exactly two non-zero entries, $(-1)^{j+1}$ and  $(-1)^j$, located at $\Gamma_{j-1}C_6(w_{2j})$ and $\Gamma_jC_6(w_{2j+1})$, respectively, and 
\item $\Sigma_{b-1}\left(M_6(v_2)\right)$ contains exactly two non-zero entries, each equal to $(-1)^b$, located at rows $A_6(u_{b-2})$ and $\Gamma_{b-2}C_6(w_{2b-1})$, respectively.
\end{enumerate} 
\end{remark}

\begin{proposition}\label{pro427}
For\; $3\leq j\leq b-1$, the column sum  $\sum\limits_{\tau=2}^{j-1}(-1)^{\tau+j-1}
\Sigma_\tau\left(M_6(v_2)\right)$ contains a unique non-zero entry equal to $(-1)^{j+1}$, located at the $(j-1)-th$ row of the block of $w_{2j}$.
\end{proposition}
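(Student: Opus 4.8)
The plan is to argue by induction on $j$, exploiting the fact that the alternating partial sums telescope. I would write $S_j:=\sum_{\tau=2}^{j-1}(-1)^{\tau+j-1}\Sigma_\tau(M_6(v_2))$ for the column vector in question and first record the recursion $S_{j+1}=\Sigma_j(M_6(v_2))-S_j$, valid for $3\leq j\leq b-2$: this follows from the sign identity $(-1)^{\tau+(j+1)-1}=-(-1)^{\tau+j-1}$ on the overlapping range $2\leq\tau\leq j-1$, together with the fact that the new summand at $\tau=j$ carries coefficient $(-1)^{2j}=1$. I would note at the outset that every column entering these sums has index at most $b-2$, so only parts $(1)$ and $(2)$ of Remark \ref{rem426} are ever used and the exceptional column $\Sigma_{b-1}(M_6(v_2))$ of Remark \ref{rem426}$(3)$ never enters the argument.

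The base case $j=3$ is immediate: the sum collapses to the single term $(-1)^{2+3-1}\Sigma_2(M_6(v_2))=\Sigma_2(M_6(v_2))$, which by Remark \ref{rem426}$(1)$ has a unique non-zero entry, equal to $1=(-1)^{3+1}$, at $\Gamma_2C_6(w_{23})$, i.e. the second row of the block of $w_{23}$. For the inductive step I would fix $3\leq j\leq b-2$, assume $S_j$ has its unique non-zero entry $(-1)^{j+1}$ at $\Gamma_{j-1}C_6(w_{2j})$, and feed into the recursion the description of $\Sigma_j(M_6(v_2))$ from Remark \ref{rem426}$(2)$, whose only non-zero entries are $(-1)^{j+1}$ at $\Gamma_{j-1}C_6(w_{2j})$ and $(-1)^{j}$ at $\Gamma_jC_6(w_{2j+1})$. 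In $S_{j+1}=\Sigma_j(M_6(v_2))-S_j$ the entry $(-1)^{j+1}$ at $\Gamma_{j-1}C_6(w_{2j})$ is cancelled by the contribution $-(-1)^{j+1}$ coming from $-S_j$ at the same row, and as neither term carries any other non-zero entry, the only survivor is $(-1)^{j}=(-1)^{(j+1)+1}$ at $\Gamma_jC_6(w_{2j+1})$, the $j$-th row of the block of $w_{2j+1}$. This is exactly the assertion for $j+1$.

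The computation is pure sign-bookkeeping, so I expect no serious difficulty; the one place that genuinely needs attention --- and which I would flag as the main obstacle --- is confirming that the induction never reaches the exceptional column $\Sigma_{b-1}(M_6(v_2))$, whose shape in Remark \ref{rem426}$(3)$ differs from the generic case in part $(2)$. This is automatic, since the largest column summed anywhere is $\Sigma_{b-2}(M_6(v_2))$ (occurring in $S_{b-1}$), so no separate boundary argument is required. As a cross-check, and an alternative to the induction, I would expand $S_j$ directly and group contributions by row: each row $\Gamma_{m-1}C_6(w_{2m})$ with $3\leq m\leq j-1$ receives precisely two contributions, one from $\Sigma_{m-1}(M_6(v_2))$ and one from $\Sigma_m(M_6(v_2))$, whose coefficients carry opposite signs and cancel, leaving only the single uncancelled contribution of $\Sigma_{j-1}(M_6(v_2))$ at $\Gamma_{j-1}C_6(w_{2j})$, which equals $(-1)^{j+1}$.
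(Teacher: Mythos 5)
Your proof is correct and follows essentially the same route as the paper: induction on $j$ with the recursion $S_{j+1}=\Sigma_j\left(M_6(v_2)\right)-S_j$, the base case from Remark \ref{rem426}(1), and the inductive step from Remark \ref{rem426}(2), which is exactly the paper's argument. Your added observations (that the exceptional column $\Sigma_{b-1}\left(M_6(v_2)\right)$ of Remark \ref{rem426}(3) never enters, and the telescoping cross-check) are sound but not needed beyond what the paper does.
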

\begin{proof}We proceed by induction on $j$ $(3\leq j\leq b-1)$. Let $j=3$. Then, we have
\begin{equation*}
\sum\limits_{\tau=2}^{j-1}(-1)^{\tau+j-1} \Sigma_\tau\left(M_6(v_2)\right)=\Sigma_2\left(M_6(v_2)\right).
\end{equation*}
Considering also Remark \ref{rem426}(1), we conclude that Proposition \ref{pro427} holds for $j=3$. Let $3\leq j\leq b-2$. We assume that (induction hypothesis) Proposition  \ref{pro427} holds for $j$. We will show that Proposition \ref{pro427} holds for $j+1$. It is easy to verify that
\begin{equation*}
\sum\limits_{\tau=2}^{(j+1)-1}(-1)^{\tau+(j+1)-1}\Sigma_\tau\left(M_6(v_2)\right)=\Sigma_j\left(M_6(v_2)\right)-\sum\limits_{\tau=2}^{j-1}(-1)^{\tau+j-1}\Sigma_\tau\left(M_6(v_2)\right),
\end{equation*}
which combined with Remark \ref{rem426}(2) and the induction hypothesis for $j$, implies that Proposition \ref{pro427} holds for $j+1$. Now, the proof is compete.
\end{proof}

Later, we will need the following general
\begin{remark}\label{rem428}
Let $M\in\mathbb{Z}^{m\times n}$ $(m,n\in\mathbb{N})$ be a matrix, and let $\Sigma_*$ be a column of $M$ that contains a unique non-zero entry equal to $(-1)^i$ $(i\in\mathbb{N})$, located at row $\Gamma_*$ of $M$. By applying the appropriate column operations, we null the remaining non-zero entries of $\Gamma_*$. Indeed, we consider those columns of $M$, each containing a non-zero entry at $\Gamma_*$, except $\Sigma_*$. By adding or subtracting an appropriate multiple of $\Sigma_*$ from each of the above columns, we null all the non-zero entries of  $\Gamma_*$, except  $(-1)^i$, located at $\Sigma_*$. Obviously, this is the unique change on $M$ caused by the above procedure.
\end{remark}

\begin{definition}\label{def429}
We define operation $D_7$ as
\begin{equation*}
D_7:=\{D_7(v_2,j),\ D_7(v_{j+1},j):\ 3\leq j\leq b-1\},
\end{equation*}
where for  $3\leq j\leq b-1$,
\begin{align*}
&D_7(v_2,j): \Sigma_j\left(M_6(v_2)\right)-\sum\limits_{\tau =2}^{j-1}(-1)^{\tau+j-1}
\Sigma_\tau\left(M_6(v_2)\right),\text{ and}\\
&D_7(v_{j+1},j): \Sigma_j\left(M_6(v_{j+1})\right)+(-1)^{j-1}2\sum\limits_{\tau=2}^{j-1}(-1)^{\tau+j-1}\Sigma_\tau\left(M_6(v_2)\right).
\end{align*}
\end{definition}

Definition \ref{def429}, Remark \ref{rem425}, Proposition \ref{pro427} and Remark \ref{rem428} lead to
\begin{corollary}\label{cor430}
The unique change caused by applying operation $D_7$ to matrix $M_6$ is the following: for $3\leq j\leq b-1$, the entries $(-1)^{j+1}$ and $-2$ of $\Gamma_{j-1}C_6(w_{2j})$, located at  $\Sigma_j\left(M_6(v_2)\right)$ and $\Sigma_j\left(M_6(v_{j+1})\right)$, respectively, are zeroed.
\end{corollary}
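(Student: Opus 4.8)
The plan is to verify, column by column, that each constituent operation of $D_7$ carries out exactly the nulling asserted and creates no side effects, with the whole argument resting on Proposition \ref{pro427}, Remark \ref{rem425}, and the clearing principle of Remark \ref{rem428}. First I would fix $3\leq j\leq b-1$ and abbreviate $S_j:=\sum_{\tau=2}^{j-1}(-1)^{\tau+j-1}\Sigma_\tau(M_6(v_2))$. Proposition \ref{pro427} tells me that $S_j$, read as a single column vector, has exactly one non-zero entry, namely $(-1)^{j+1}$ in row $\Gamma_{j-1}C_6(w_{2j})$. This is the crucial structural input, since it makes $S_j$ play the role of the ``clean column $\Sigma_*$'' of Remark \ref{rem428}.

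Next I would treat the two operations separately and reduce everything to arithmetic. For $D_7(v_2,j)$, which replaces $\Sigma_j(M_6(v_2))$ by $\Sigma_j(M_6(v_2))-S_j$: by Remark \ref{rem425} the entry of $\Sigma_j(M_6(v_2))$ in row $\Gamma_{j-1}C_6(w_{2j})$ is $(-1)^{j+1}$, so subtracting $S_j$ turns it into $(-1)^{j+1}-(-1)^{j+1}=0$, and since $S_j$ vanishes in every other row, nothing else in that column moves. For $D_7(v_{j+1},j)$, which adds $(-1)^{j-1}2\,S_j$ to $\Sigma_j(M_6(v_{j+1}))$: Remark \ref{rem425} gives the entry $-2$ in the same row, and the added vector contributes $(-1)^{j-1}2\cdot(-1)^{j+1}=2$ there, so the entry becomes $-2+2=0$, again with no other change. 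This is exactly the mechanism of Remark \ref{rem428} applied with $\Sigma_*=S_j$ and $\Gamma_*=\Gamma_{j-1}C_6(w_{2j})$.

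The step I expect to be the main obstacle is confirming that the whole family of operations, run together over all $j$, does not interfere, i.e. that each $S_j$ one uses is its value in the fixed matrix $M_6$ and not in some partially transformed matrix. I would settle this by prescribing an order of execution: perform all the $D_7(v_{j+1},j)$ first (these modify only columns in the blocks $v_4,\dots,v_b$, which are pairwise disjoint and disjoint from the block $v_2$, so each still reads the original $v_2$-columns and cannot disturb another), and then perform the $D_7(v_2,j)$ in \emph{decreasing} order of $j$. Since $S_j$ involves only the $v_2$-columns $\Sigma_2,\dots,\Sigma_{j-1}$ of index below $j$, and the decreasing pass touches $\Sigma_j(M_6(v_2))$ only after all columns of larger index and before any of smaller index, each $S_j$ is evaluated on columns that are still in their $M_6$-state; moreover $D_7(v_2,j)$ alters $\Sigma_j(M_6(v_2))$ only in row $\Gamma_{j-1}C_6(w_{2j})$, a row that appears in no later $S_{j'}$ with $j'<j$. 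Consequently every operation acts precisely as computed above from $M_6$.

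Assembling the per-$j$ analysis over $3\leq j\leq b-1$ then gives that the total effect of $D_7$ on $M_6$ is exactly the nulling of the entries $(-1)^{j+1}$ and $-2$ of $\Gamma_{j-1}C_6(w_{2j})$ at $\Sigma_j(M_6(v_2))$ and $\Sigma_j(M_6(v_{j+1}))$, and nothing else, which is the assertion of the corollary.
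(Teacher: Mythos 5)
Your proposal is correct and takes essentially the same route as the paper: the paper obtains Corollary \ref{cor430} directly from Definition \ref{def429}, Remark \ref{rem425}, Proposition \ref{pro427} and the clearing mechanism of Remark \ref{rem428}, which is precisely the row-by-row arithmetic you spell out (the entry $(-1)^{j+1}-(-1)^{j+1}=0$ in $\Sigma_j(M_6(v_2))$ and $-2+(-1)^{j-1}2(-1)^{j+1}=0$ in $\Sigma_j(M_6(v_{j+1}))$, with no side effects since $S_j$ is supported in a single row). Your third paragraph's ordering argument addresses a point the paper leaves implicit -- namely that all constituent operations of $D_7$ are expressed in columns of the fixed matrix $M_6$ and hence act simultaneously -- and your explicit execution order is a valid way to realize that simultaneous transformation as a sequence of elementary (hence unimodular) column operations.
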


Now, we obtain matrix $M_7:=D_7M_6=D_7\dots D_1M$, given by Proposition \ref{pro41}  for $n=7$. From (\ref{eq41a}) and (\ref{eq41b}) for $n=7$, Corollary \ref{cor430} and (\ref{eq4613}), we have
\begin{equation}\label{eq471}
A_7(u_i,v_j)=A_6(u_i,v_j), \text{ where } 2\leq i\leq b-1 \text{ and } 1\leq j\leq b,
\end{equation}
and $A_6(u_i,v_j)$ is given by (\ref{eq466}) - (\ref{eq4612}), and
\begin{equation}\label{eq472}
B_7(w_i,v_j)=B_5(w_i,v_j), \text{ where } 2\leq i\leq b-1 \text{ and } 1\leq j\leq b,
\end{equation}
and $B_5(w_i,v_j)$ is given by (\ref{eq455}) - (\ref{eq457}).

From (\ref{eq41c}) for $n=7$, (\ref{eq4614}), (\ref{eq448}) - (\ref{eq4412}), (\ref{eq3313}), (\ref{eq3314}), (\ref{eq435}) - (\ref{eq437}), (\ref{eq4310}) - (\ref{eq4312}) and Corollary \ref{cor430}, we obtain the following relations:
\begin{equation}\label{eq473}
C_7(w_{ij},v_1)=O_{(b-2)\times1}, \text{ where } 2\leq i<j\leq b-1,
\end{equation}
\begin{equation}\label{eq474}
C_7(w_{ij},v_h)=O_{(b-2)\times(b-1)},
\end{equation}
where $3\leq i<j\leq b-1$ and $h=2$  or  $2\leq i<j\leq b-1$ and $3\leq h\leq b-1$ with $h\neq i$ or $3\leq i<j\leq b-2$ and $h=i$ or $2\leq i<j\leq b-2$ and $h=b$,
\begin{equation}\label{eq475}
C_7(w_{2j},v_2)=(-1)^{j+1}E_{j-1}\in\mathbb{Z}^{(b-2)\times(b-1)}, \text{ where } 3\leq j\leq b-2,
\end{equation}
\begin{equation}\label{eq476}
C_7(w_{2b-1},v_2)=(-1)^b(2E_1+E_{b-2})\in\mathbb{Z}^{(b-2)\times(b-1)},
\end{equation}
\begin{equation}\label{eq477}
C_7(w_{ib-1},v_i)=(-1)^b2E_{i-1}\in\mathbb{Z}^{(b-2)\times(b-1)}, \text{ where } 3\leq i\leq b-2, \text{ and}
\end{equation}
\begin{equation}\label{eq478}
C_7(w_{ib-1},v_b) =(-1)^{i+1}(E_{i-1}+E_{i-1,i})\in\mathbb{Z}^{(b-2)\times(b-1)}, \text{ where } 2\leq i\leq b-2.
\end{equation}
\textbf{Results of $\boldsymbol{D_{7}}$.} From Remarks \ref{rem425} - \ref{rem426}, Proposition \ref{pro427}, Remark \ref{rem428}, Definition \ref{def429} and Corollary \ref{cor430}, we conclude the following
\begin{corollary}\label{cor431}
Matrix $M_7$ has the following properties:
\begin{enumerate}
\item for $3\leq j\leq b-1$, $\Gamma_{j-1}C_7(w_{2j})$ contains a unique non-zero entry equal to $(-1)^{j+1}$, located at $\Sigma_{j-1}\left(M_7(v_2)\right)$, and this is the unique non-zero entry of $\Sigma_{j-1}\left(M_7(v_2)\right)$, and
\item $\Sigma_{b-1}\left(M_7(v_2)\right)$ contains a unique non-zero entry equal to $(-1)^b$, located at row $A_{7}(u_{b-2})$.
\end{enumerate}
\end{corollary}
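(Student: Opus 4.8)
The plan is to read both assertions directly off the description of $M_6$ together with the precise record of which entries operation $D_7$ annihilates. Since $D_7$ is a package of column operations and Corollary~\ref{cor430} already isolates exactly the entries it kills, no genuine computation remains: I only need to match indices and signs between the pre-$D_7$ picture (Remarks~\ref{rem425} and~\ref{rem426}) and the post-$D_7$ picture. Throughout I will use that, by Corollary~\ref{cor430}, the sole effect of $D_7$ is to annihilate, for each $3\leq j\leq b-1$, the two entries $(-1)^{j+1}$ and $-2$ of $\Gamma_{j-1}C_6(w_{2j})$ sitting in $\Sigma_j(M_6(v_2))$ and $\Sigma_j(M_6(v_{j+1}))$, every other entry of $M_6$ being left unchanged.

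For part~(1) I would first settle the row. By Remark~\ref{rem425}, in $M_6$ the row $\Gamma_{j-1}C_6(w_{2j})$ has exactly the three nonzero entries $(-1)^{j+1}$, $(-1)^{j+1}$, $-2$ in the columns $\Sigma_{j-1}(M_6(v_2))$, $\Sigma_j(M_6(v_2))$, $\Sigma_j(M_6(v_{j+1}))$; Corollary~\ref{cor430} removes the last two, so in $M_7$ only $(-1)^{j+1}$ at $\Sigma_{j-1}(M_7(v_2))$ survives, giving the row statement. For the column statement I would split on $j$. When $j=3$, Remark~\ref{rem426}(1) already records that $\Sigma_2(M_6(v_2))$ has the single nonzero entry $1=(-1)^{3+1}$ at $\Gamma_2C_6(w_{23})$, and this column is untouched by $D_7$. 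When $4\leq j\leq b-1$, I apply Remark~\ref{rem426}(2) with index $j-1$: the column $\Sigma_{j-1}(M_6(v_2))$ carries exactly two nonzero entries, namely $(-1)^{j}$ at $\Gamma_{j-2}C_6(w_{2,j-1})$ and $(-1)^{j-1}=(-1)^{j+1}$ at $\Gamma_{j-1}C_6(w_{2j})$. Invoking Corollary~\ref{cor430} now with index $j-1$ shows the first of these is precisely one of the entries $D_7$ annihilates, so after $D_7$ the column retains only $(-1)^{j+1}$, as claimed.

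For part~(2) I would use Remark~\ref{rem426}(3): the column $\Sigma_{b-1}(M_6(v_2))$ has exactly two nonzero entries, both equal to $(-1)^b$, one in the row $A_6(u_{b-2})$ and one at $\Gamma_{b-2}C_6(w_{2,b-1})$. Corollary~\ref{cor430} with index $b-1$ kills the entry at $\Gamma_{b-2}C_6(w_{2,b-1})$ and, since that zeroing is its \emph{sole} effect, the entry in row $A_6(u_{b-2})$ is left intact. Finally (\ref{eq471}) identifies $A_7(u_{b-2})=A_6(u_{b-2})$, so $\Sigma_{b-1}(M_7(v_2))$ has the single surviving entry $(-1)^b$ at $A_7(u_{b-2})$.

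The arithmetic here is entirely routine; the only point demanding care is the index-and-sign bookkeeping, namely verifying that the ``extra'' nonzero entry of $\Sigma_{j-1}(M_7(v_2))$ is exactly the entry removed by the instance of $D_7$ indexed at $j-1$ (the shift by one is what makes the cancellation line up), and that $(-1)^{j-1}$ and $(-1)^{j+1}$ agree. I do not anticipate a real obstacle: Proposition~\ref{pro427} and Remark~\ref{rem428} are the mechanism already underlying Corollary~\ref{cor430} --- they guarantee that each column operation of Definition~\ref{def429} alters only its single intended entry --- so once Corollary~\ref{cor430} is granted, the proof of Corollary~\ref{cor431} is a purely combinatorial read-off.
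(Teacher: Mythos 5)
Your proposal is correct and takes essentially the same route as the paper: the paper also obtains Corollary \ref{cor431} as a direct read-off from Remarks \ref{rem425}--\ref{rem426}, Definition \ref{def429} and Corollary \ref{cor430}, which is exactly the index-shifted bookkeeping (applying Corollary \ref{cor430} at index $j-1$ to clear the second entry of $\Sigma_{j-1}(M_6(v_2))$, and at index $b-1$ for part (2)) that you spell out.
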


\subsection{Operation $\boldsymbol{D_8}$}
We consider matrix $M_7$ that satisfies Corollary \ref{cor431}(2). We identify the non-zero entries of $A_7(u_{b-2})$, which is given by (\ref{eq471}), (\ref{eq467}) - (\ref{eq469}), (\ref{eq336}), (\ref{eq337}), (\ref{eq412}), (\ref{eq413}) and (\ref{eq443}), and we apply the procedure described by Remark \ref{rem428}. The above leads to the following

\begin{definition}\label{def432}
We define operation $D_8$ as
\begin{equation*}
D_8:=\{D_8(v_1),\ D_8(v_{b-1}),\ D_8(v_b,\tau): 1\leq\tau\leq b-3\},
\end{equation*}
where
\begin{align*}
&D_8(v_1):M_7(v_1)+2\Sigma_{b-1}\left(M_7(v_2)\right),\\
&D_8(v_{b-1}):\Sigma_{b-2}\left(M_7(v_{b-1})\right)+(-1)^b2\Sigma_{b-1}\left(M_7(v_2)\right),\\
&D_8(v_b,1):\Sigma_1\left(M_7(v_b)\right)+(-1)^b(a+1)\Sigma_{b-1}\left(M_7(v_2)\right),\text{ and}\\
&D_8(v_b,\tau):\Sigma_\tau\left(M_7(v_b)\right)+(-1)^{b-1}\Sigma_{b-1}\left(M_7(v_2)\right), \text{ where } 2\leq\tau\leq b-3.
\end{align*}
\end{definition}

From Definition \ref{def432} and the comment before it, we conclude 
\begin{corollary}\label{cor433}
The unique change caused by applying operation $D_8$ to matrix $M_7$ is the nulling of all non-zero entries of row $A_7(u_{b-2})$, excluding $(-1)^b$, located at $\Sigma_{b-1}\left(M_7(v_2)\right)$. 
\end{corollary}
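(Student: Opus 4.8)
The plan is to reduce the statement to an instance of Remark \ref{rem428} together with a direct sign count. By Corollary \ref{cor431}(2) the column $\Sigma_{b-1}\left(M_7(v_2)\right)$ has a single non-zero entry, namely $(-1)^b$ in row $A_7(u_{b-2})$. Consequently every other row vanishes in this column, so adding any integer multiple of $\Sigma_{b-1}\left(M_7(v_2)\right)$ to another column can only alter the entry of that column lying in row $A_7(u_{b-2})$. This is precisely the situation of Remark \ref{rem428} with $\Sigma_*=\Sigma_{b-1}\left(M_7(v_2)\right)$ and $\Gamma_*=A_7(u_{b-2})$; hence the four families of operations forming $D_8$ affect no row other than $A_7(u_{b-2})$, and it remains only to check that they cancel exactly the non-zero entries of this row outside the block of $v_2$.

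First I would write out the row $A_7(u_{b-2})$ explicitly. Combining (\ref{eq471}) and (\ref{eq467}) - (\ref{eq469}) with (\ref{eq336}), (\ref{eq337}), (\ref{eq412}), (\ref{eq413}) and (\ref{eq443}), one sees that this row is supported only on the blocks of $v_1,v_2,v_{b-1}$ and $v_b$, the blocks of $v_3,\dots,v_{b-2}$ being zero. The non-zero data are: the $v_1$-entry $(-1)^{b-1}2$; the $v_2$-entry $(-1)^b$ at position $b-1$, which is the preserved pivot; the $v_{b-1}$-entry $-2$ at position $b-2$; and the $v_b$-block $-\bigl[\,a+1\ \ -1\ \cdots\ -1\ \ 0\ \ 0\,\bigr]$, i.e. $-(a+1)$ at position $1$, the value $1$ at each position $\tau$ with $2\le\tau\le b-3$, and $0$ at positions $b-2$ and $b-1$.

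With the pivot entry equal to $(-1)^b$, the cancellations are then immediate. Operation $D_8(v_1)$ replaces $(-1)^{b-1}2$ by $(-1)^{b-1}2+2(-1)^b=0$; operation $D_8(v_{b-1})$ replaces $-2$ by $-2+(-1)^b2(-1)^b=0$; operation $D_8(v_b,1)$ replaces $-(a+1)$ by $-(a+1)+(-1)^b(a+1)(-1)^b=0$; and for $2\le\tau\le b-3$, operation $D_8(v_b,\tau)$ replaces $1$ by $1+(-1)^{b-1}(-1)^b=0$. The positions not touched by these operations, namely positions $b-2,b-1$ of the $v_b$-block and every position of $A_7(u_{b-2},v_{b-1})$ other than $b-2$, are already zero. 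This exhausts all non-zero entries of $A_7(u_{b-2})$ apart from the pivot, which is untouched, and combined with the first paragraph it yields Corollary \ref{cor433}.

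I expect the main obstacle to be purely clerical: the verification hinges on tracking the parity factors $(-1)^b$ and $(-1)^{b-1}$ produced by (\ref{eq412}), (\ref{eq413}) and (\ref{eq443}) against the sign $(-1)^b$ of the pivot. There is no genuine difficulty once the row $A_7(u_{b-2})$ is recorded correctly, since the coefficients appearing in Definition \ref{def432} are manifestly designed to force these cancellations, and the single-entry structure of the pivot column guarantees there are no side effects.
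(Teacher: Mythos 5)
Your proposal is correct and follows essentially the same route as the paper: the paper's justification is precisely Corollary \ref{cor431}(2) plus Remark \ref{rem428} applied to the pivot column $\Sigma_{b-1}\left(M_7(v_2)\right)$, together with identifying the non-zero entries of $A_7(u_{b-2})$ from (\ref{eq471}), (\ref{eq467})--(\ref{eq469}), (\ref{eq336}), (\ref{eq337}), (\ref{eq412}), (\ref{eq413}) and (\ref{eq443}). Your explicit listing of the row and the sign-by-sign verification of the cancellations in Definition \ref{def432} merely spells out what the paper leaves implicit, and all your entries and parities check out.
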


Now, we obtain matrix $M_8:=D_8M_7=D_8\dots D_1M$, given by Proposition \ref{pro41} for $n=8$. From (\ref{eq41a}) for $n=8$, Corollary \ref{cor433}, (\ref{eq471}), (\ref{eq466}), (\ref{eq468}) and (\ref{eq412}), it follows that
\begin{equation}\label{eq481}
A_8(u_i,v_1)=O_{1\times1} \text{ and } A_8(u_i,v_j)=O_{1\times(b-1)}, 
\end{equation} 
where  $2\leq i\leq b-3$ and $2\leq j\leq b$,
\begin{align}\label{eq482}
&A_8(u_{b-2},v_1)=O_{1\times1},\  A_8(u_{b-2},v_2)=(-1)^b
\begin{bmatrix}
O_{1\times(b-2)}\quad1
\end{bmatrix} \text{ and}\\\nonumber
&A_8(u_{b-2},v_j)=O_{1\times(b-1)}, \text{ where } 3\leq j\leq b, \text{ and}
\end{align} 
\begin{equation}\label{eq483}
A_8(u_{b-1},v_j)=A_6(u_{b-1},v_j), \text{ where } 1\leq j\leq b,
\end{equation}
and $A_6(u_{b-1},v_j)$ is given by (\ref{eq4610}) - (\ref{eq4612}).

From (\ref{eq41b}) and (\ref{eq41c}) for $n=8$, Corollary \ref{cor433} and (\ref{eq472}), it follows that 
\begin{equation}\label{eq484}
B_8(w_i,v_j)=B_5(w_i,v_j),\text{ where } 2\leq i\leq b-1 \text{ and } 1\leq j\leq b,
\end{equation} 
and $B_5(w_i,v_j)$ is given by (\ref{eq455}) - (\ref{eq457}), and
\begin{equation}\label{eq485}
C_8(w_{ij},v_h)=C_7(w_{ij},v_h), \text{ where }
2\leq i<j\leq b-1 \text{ and } 1\leq h\leq b,
\end{equation}  
and $C_7(w_{ij},v_h)$ is given by (\ref{eq473}) - (\ref{eq478}).\\
\textbf{Result of $\boldsymbol{D_8}$.} Corollaries \ref{cor431}(2) and \ref{cor433} yield the following 
\begin{corollary}\label{cor434} 
Row $A_8(u_{b-2})$ contains a unique non-zero entry equal to $(-1)^b$, located at $\Sigma_{b-1}(M_8(v_2))$, and this is the unique non-zero entry of $\Sigma_{b-1}(M_8(v_2))$.
\end{corollary}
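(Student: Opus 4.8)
The plan is to read the two assertions of the corollary—one about the row $A_8(u_{b-2})$ and one about the column $\Sigma_{b-1}(M_8(v_2))$—directly off Corollaries \ref{cor431}(2) and \ref{cor433}, respectively, so that essentially no new computation is required. The whole content lies in checking that the column property established for $M_7$ survives intact the passage to $M_8$.

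First I would establish the column assertion. By Corollary \ref{cor431}(2), in $M_7$ the column $\Sigma_{b-1}(M_7(v_2))$ contains a unique non-zero entry, equal to $(-1)^b$, situated in the row $A_7(u_{b-2})$. The crucial observation is that every constituent of $D_8$, as listed in Definition \ref{def432}, is a column operation that adds an integer multiple of $\Sigma_{b-1}(M_7(v_2))$ to some \emph{other} column; the distinguished column $\Sigma_{b-1}(M_7(v_2))$ occurs only as the column being added and never as a target, so it is left unchanged. Hence $\Sigma_{b-1}(M_8(v_2))=\Sigma_{b-1}(M_7(v_2))$, and this column still has exactly one non-zero entry, namely $(-1)^b$ in the row labelled $u_{b-2}$. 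This yields the second half of the statement.

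Next I would establish the row assertion, which is exactly the content of Corollary \ref{cor433}: applying $D_8$ to $M_7$ nulls every non-zero entry of $A_7(u_{b-2})$ except the entry $(-1)^b$ lying in $\Sigma_{b-1}(M_7(v_2))$. Because, as just noted, that distinguished column is untouched, the surviving entry is still $(-1)^b$ at $\Sigma_{b-1}(M_8(v_2))$, so $A_8(u_{b-2})$ has a single non-zero entry, equal to $(-1)^b$, located there. Combining the two gives simultaneously that $(-1)^b$ is the unique non-zero entry of its row \emph{and} of its column, which is the corollary.

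I do not expect a genuine obstacle, since both inputs are already proved; the only point that needs care is the bookkeeping verification—directly from Definition \ref{def432}—that $D_8$ fixes $\Sigma_{b-1}(M_7(v_2))$. Once this is confirmed, the column property transfers verbatim from $M_7$ to $M_8$ while Corollary \ref{cor433} supplies the row property, and the two statements fit together with no further work.
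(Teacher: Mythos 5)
Your proposal is correct and follows essentially the same route as the paper, which derives the corollary directly from Corollaries \ref{cor431}(2) and \ref{cor433}. Your extra bookkeeping check—that every operation in Definition \ref{def432} only adds multiples of $\Sigma_{b-1}\left(M_7(v_2)\right)$ to \emph{other} columns, so the distinguished column itself is unchanged—is exactly the fact implicit in the phrase ``unique change'' in Corollary \ref{cor433}, so nothing genuinely new is added.
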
 

\subsection{Operation $\boldsymbol{D_9}$}
We consider matrix $M_8$ and focus on  $\Gamma_{i-1}C_8(w_{ib-1})$, where $2\leq i \leq b-2$, given by (\ref{eq485}), (\ref{eq473}), (\ref{eq474}) and  (\ref{eq476}) - (\ref{eq478}). 
\begin{remark}\label{rem435}
In matrix $M_8$, we notice that for $2\leq i\leq b-2$, $\Gamma_{i-1}C_8(w_{ib-1})$ contains exactly three non-zero entries, $(-1)^b2$,  $(-1)^{i+1}$ and $(-1)^{i+1}$, located at $\Sigma_{i-1}(M_8(v_i))$,  $\Sigma_{i-1}(M_8(v_b))$ and   $\Sigma_i(M_8(v_b))$, respectively.
\end{remark}

For $2\leq i\leq b-2$, we will null the entries $(-1)^b2$ and $(-1)^{i+1}$ of $\Gamma_{i-1}C_8(w_{ib-1})$, located at $\Sigma_{i-1}(M_8(v_i))$ and $\Sigma_{i-1}(M_8(v_b))$, respectively.

Now, we focus on the block $M_8(v_b)$, given by (\ref{eq481}) - (\ref{eq483}), (\ref{eq4612}), (\ref{eq484}), (\ref{eq455}), (\ref{eq485}), (\ref{eq474})  and (\ref{eq478}). 
\begin{remark}\label{rem436}
In the block $M_8(v_b)$, we observe the following:
\begin{enumerate}
\item $\Sigma_1(M_8(v_b))$ contains exactly two non-zero entries, $\omega$ [see (\ref{eq4612})] and $-1$, located at rows $A_8(u_{b-1})$ and   $\Gamma_1C_8(w_{2b-1})$, respectively,
\item for $2\leq i\leq b-3$, $\Sigma_i(M_8(v_b))$ contains exactly two non-zero entries, $(-1)^{i+1}$ and $(-1)^i$, located at $\Gamma_{i-1}C_8(w_{ib-1})$ and $\Gamma_iC_8(w_{i+1b-1})$, respectively, and
\item $\Sigma_{b-2}(M_8(v_b))$ contains a unique non-zero entry equal to $(-1)^{b-1}$, located at $\Gamma_{b-3}C_8(w_{b-2b-1})$.
\end{enumerate} 
\end{remark}

\begin{proposition}\label{pro437}
For $2\leq i\leq b-2$, the column sum $\sum\limits_{\tau=i}^{b-2}(-1)^{\tau+i}\Sigma_\tau\left(M_8(v_b)\right)$ contains a unique non-zero entry equal to $(-1)^{i+1}$, located at $\Gamma_{i-1}C_8(w_{ib-1})$.
\end{proposition}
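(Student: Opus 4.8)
The plan is to establish the statement by induction on $i$ carried out downward from $i=b-2$ to $i=2$, in direct analogy with the proof of Proposition \ref{pro427}. The entire argument is driven by the column-by-column description of the block $M_8(v_b)$ recorded in Remark \ref{rem436}, so no further information about the matrix is needed; it is pure sign-and-cancellation bookkeeping.

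For the base case $i=b-2$ the indicated column sum collapses to a single term,
\begin{equation*}
\sum_{\tau=b-2}^{b-2}(-1)^{\tau+(b-2)}\Sigma_\tau\left(M_8(v_b)\right)=\Sigma_{b-2}\left(M_8(v_b)\right),
\end{equation*}
and Remark \ref{rem436}(3) asserts that this column has a single non-zero entry $(-1)^{b-1}=(-1)^{(b-2)+1}$ located at $\Gamma_{b-3}C_8(w_{b-2b-1})$, which is exactly the form $(-1)^{i+1}$ at $\Gamma_{i-1}C_8(w_{ib-1})$ for $i=b-2$.

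For the inductive step I would assume the statement for some $i$ with $3\leq i\leq b-2$ and deduce it for $i-1$. Splitting off the bottom term and using the sign relation $(-1)^{\tau+(i-1)}=-(-1)^{\tau+i}$ gives the telescoping identity
\begin{equation*}
\sum_{\tau=i-1}^{b-2}(-1)^{\tau+(i-1)}\Sigma_\tau\left(M_8(v_b)\right)=\Sigma_{i-1}\left(M_8(v_b)\right)-\sum_{\tau=i}^{b-2}(-1)^{\tau+i}\Sigma_\tau\left(M_8(v_b)\right).
\end{equation*}
By the induction hypothesis the second sum carries the single entry $(-1)^{i+1}$ at $\Gamma_{i-1}C_8(w_{ib-1})$, while Remark \ref{rem436}(2) applied to the column of index $i-1$ (legitimate since $2\leq i-1\leq b-3$) gives $\Sigma_{i-1}\left(M_8(v_b)\right)$ the two entries $(-1)^i$ at $\Gamma_{i-2}C_8(w_{i-1b-1})$ and $(-1)^{i-1}$ at $\Gamma_{i-1}C_8(w_{ib-1})$. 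In the difference the two contributions at $\Gamma_{i-1}C_8(w_{ib-1})$ cancel, because $(-1)^{i-1}-(-1)^{i+1}=0$, and only $(-1)^i=(-1)^{(i-1)+1}$ at $\Gamma_{i-2}C_8(w_{i-1b-1})$ survives; this is precisely the asserted form for $i-1$. As $i$ ranges over $3\leq i\leq b-2$ the conclusions cover $2\leq i-1\leq b-3$, and together with the base case this proves the statement for all $2\leq i\leq b-2$.

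Since the mechanism is the same telescoping cancellation already used for Proposition \ref{pro427}, I expect no conceptual difficulty. The only delicate point is the parity bookkeeping: confirming that the exponents of $-1$ are arranged so that the entry at $\Gamma_{i-1}C_8(w_{ib-1})$ is annihilated while the one at $\Gamma_{i-2}C_8(w_{i-1b-1})$ persists, and checking that the index ranges in Remark \ref{rem436} line up so that part (3) governs the base case and part (2) governs every interior step.
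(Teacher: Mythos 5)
Your proof is correct and follows essentially the same route as the paper: the paper proves this by induction on $r=(b-1)-i$, which is exactly your downward induction on $i$, with the base case $i=b-2$ handled by Remark \ref{rem436}(3) and the inductive step by the telescoping identity together with Remark \ref{rem436}(2), mirroring the proof of Proposition \ref{pro427}. The sign bookkeeping and index ranges in your argument check out, so nothing is missing.
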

\begin{proof}By induction on $r=(b-1)-i$ $(1\leq r\leq b-3)$, similarly to the proof of Proposition \ref{pro427}.
\end{proof}

\begin{definition}\label{def438}
We define operation $D_9$ as
\begin{equation*}
D_9:=\{D_9(v_i,i-1),\ D_9(v_b,i-1): 2\leq i\leq b-2\},
\end{equation*}
where for $2\leq i\leq b-2$,
\begin{align*}
&D_9(v_i,i-1): \Sigma_{i-1}(M_8(v_i))+(-1)^{b+i}2 \sum\limits_{\tau=i}^{b-2}(-1)^{\tau+i}  \Sigma_\tau(M_8(v_b)),\text{ and }\\
&D_9(v_b,i-1):\Sigma_{i-1}(M_8(v_b))- \sum\limits_{\tau=i}^{b-2}(-1)^{\tau+i}\Sigma_\tau(M_8(v_b)).
\end{align*}
\end{definition}

Definition \ref{def438}, Remark \ref{rem435}, Proposition \ref{pro437} and Remark \ref{rem428}, lead to
\begin{corollary}\label{cor439}
The unique change caused by applying operation $D_9$ to matrix $M_8$ is the following: for $2\leq i\leq b-2$, the entries $(-1)^b2$ and $(-1)^{i+1}$ of $\Gamma_{i-1}C_8(w_{ib-1})$, located at $\Sigma_{i-1}(M_8(v_i))$ and $\Sigma_{i-1}(M_8(v_b))$, respectively, are zeroed.
\end{corollary}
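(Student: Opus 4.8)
The plan is to mirror the argument already used for operation $D_7$ (Corollary \ref{cor430}): the entire effect of $D_9$ reduces to a single scalar cancellation in one distinguished row per index, and everything else is inert because the column combination being added is sparse. Concretely, for each $i$ with $2\leq i\leq b-2$ I would abbreviate $S_i:=\sum_{\tau=i}^{b-2}(-1)^{\tau+i}\Sigma_\tau(M_8(v_b))$ and invoke Proposition \ref{pro437}, which guarantees that $S_i$ has a \emph{single} nonzero entry, namely $(-1)^{i+1}$, located in row $\Gamma_{i-1}C_8(w_{ib-1})$. Consequently, adding any integer multiple of $S_i$ to a column of $M_8$ alters that column in exactly one position, the entry in row $\Gamma_{i-1}C_8(w_{ib-1})$, which is precisely the mechanism formalized in Remark \ref{rem428}.

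First I would treat $D_9(v_i,i-1)$. By Remark \ref{rem435}, the entry of $\Sigma_{i-1}(M_8(v_i))$ in row $\Gamma_{i-1}C_8(w_{ib-1})$ equals $(-1)^b2$. Adding $(-1)^{b+i}2\,S_i$ changes only that row, where the increment is $(-1)^{b+i}2\cdot(-1)^{i+1}=(-1)^{b+1}2$, so the entry becomes $(-1)^b2+(-1)^{b+1}2=0$. Next I would treat $D_9(v_b,i-1)$: again by Remark \ref{rem435} the entry of $\Sigma_{i-1}(M_8(v_b))$ in the same row is $(-1)^{i+1}$, and subtracting $S_i$ cancels it to $(-1)^{i+1}-(-1)^{i+1}=0$, with no other row affected. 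These two cancellations are the only genuine arithmetic in the proof.

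The step requiring care is that the columns $\Sigma_\tau(M_8(v_b))$ serving as sources in the various $S_i$ overlap with the columns $\Sigma_{i-1}(M_8(v_b))$ that are themselves targets of $D_9(v_b,i-1)$. I would resolve this exactly as in the $D_7$ step: all combinations are read off the fixed matrix $M_8$, so $D_9$ is right multiplication by one integer matrix and its constituent column operations may be analysed independently against $M_8$. Since $S_i$ uses only columns $i,\dots,b-2$ of the block $v_b$ while $D_9(v_b,i-1)$ targets column $i-1$, the target never appears among its own sources; moreover the targeted columns are pairwise distinct and the distinct sums $S_i$ hit distinct rows $\Gamma_{i-1}C_8(w_{ib-1})$. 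Hence by Remark \ref{rem428} the only alterations are the two zeroings described above, which is exactly the assertion of the Corollary. I expect the sparsity statement of Proposition \ref{pro437} to carry all the weight; the remaining obstacle is purely bookkeeping, namely confirming that no off-target entry is disturbed by the overlapping source and target columns.
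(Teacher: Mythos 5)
Your proof is correct and follows essentially the same route as the paper, which derives Corollary \ref{cor439} directly from Definition \ref{def438}, Remark \ref{rem435}, Proposition \ref{pro437} and the mechanism of Remark \ref{rem428}: the sparsity of $S_i$ localizes each operation to the single row $\Gamma_{i-1}C_8(w_{ib-1})$, where the two explicit cancellations $(-1)^b2+(-1)^{b+1}2=0$ and $(-1)^{i+1}-(-1)^{i+1}=0$ occur. Your additional check that the source columns $\Sigma_\tau(M_8(v_b))$, $\tau\geq i$, never coincide with the target column $\Sigma_{i-1}(M_8(v_b))$ and that all operations are read off the fixed matrix $M_8$ is a point the paper leaves implicit, but it is the same argument, just spelled out more carefully.
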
 

Now, we obtain matrix $M_9:=D_9M_8=D_9\dots D_1M$, given by Proposition \ref{pro41} for $n=9$. From (\ref{eq41a}) and (\ref{eq41b}) for $n=9$, Corollary \ref{cor439} and (\ref{eq484}), we have
\begin{equation}\label{eq491}
A_9(u_i,v_j)=A_8(u_i,v_j), \text{ where } 2\leq i\leq b-1 \text{ and } 1\leq j\leq b,
\end{equation} 
and $A_8(u_i,v_j)$ is given by (\ref{eq481}) - (\ref{eq483}), and
\begin{equation}\label{eq492}
 B_9(w_i,v_j)=B_5(w_i,v_j), \text{ where }  2\leq i\leq b-1 \text{ and } 1\leq j\leq b,
\end{equation}  
and $B_5(w_i,v_j)$ is given by (\ref{eq455}) - (\ref{eq457}).

From (\ref{eq41c}) for $n=9$, (\ref{eq485}), (\ref{eq473}) - (\ref{eq478}) and Corollary \ref{cor439}, we obtain the following relations:
\begin{equation}\label{eq493}
C_9(w_{ij},v_1)=O_{(b-2)\times1} \text{ and } C_9(w_{ij},v_h)=O_{(b-2)\times(b-1)}, 
\end{equation}
where $2\leq i<j\leq b-1$ and $h=2\neq i$ or $3\leq h\leq b-1$ or $h=b\neq j+1$,
\begin{equation}\label{eq494}
C_9(w_{2j},v_2)=(-1)^{j+1}E_{j-1}\in\mathbb{Z}^{(b-2)\times(b-1)}, \text{ where } 3\leq j\leq b-1, \text{ and}
\end{equation}
\begin{equation}\label{eq495}  
C_9(w_{ib-1},v_b)=(-1)^{i+1}E_{i-1,i}\in\mathbb{Z}^{(b-2)\times(b-1)},\text{ where } 2\leq i\leq b-2. 
\end{equation}
\textbf{Results of $\boldsymbol{D_9}$.} 
From Remarks \ref{rem435} - \ref{rem436}, Proposition \ref{pro437}, Definition \ref{def438} and Corollary \ref{cor439}, we deduce 
\begin{corollary}\label{cor440}
Matrix $M_9$ has the following properties:
\begin{enumerate}
\item for  $2\leq i\leq b-2$,  $\Gamma_{i-1}C_9(w_{ib-1})$ contains a unique non-zero entry equal to $(-1)^{i+1}$, located at $\Sigma_i(M_9(v_b))$, and this is the unique non-zero entry of $\Sigma_i(M_9(v_b))$, and
\item $\Sigma_1(M_9(v_b))$ contains a unique non-zero entry equal to $a+2$, if $b$ is even, and $a+1$, if $b$ is odd, located at row $A_9(u_{b-1})$.
\end{enumerate}
\end{corollary}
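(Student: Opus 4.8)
The plan is to read off both assertions directly from the explicit block form of $M_9$ recorded in (\ref{eq491})--(\ref{eq495}) (which already incorporates the effect of $D_9$ via Corollary \ref{cor439}); the only real work is to verify that the nine preceding operations have cleared every entry of the column block $M_9(v_b)$ except those claimed. Accordingly, I would first fix $M_9(v_b)$ and enumerate which row blocks can contribute a non-zero entry to it. By (\ref{eq455}) together with (\ref{eq492}) every $B_9(w_i,v_b)$ vanishes; by (\ref{eq491}), (\ref{eq481}) and (\ref{eq482}) the rows $A_9(u_i,v_b)$ vanish for $2\leq i\leq b-2$, so the only surviving $A$-row is $A_9(u_{b-1},v_b)=A_6(u_{b-1},v_b)$, which by (\ref{eq4612}) has non-zero entries only in columns $1$ and $b-1$; and by (\ref{eq493}) the blocks $C_9(w_{ij},v_b)$ with $j\neq b-1$ are zero. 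Thus the interior of $M_9(v_b)$ is controlled entirely by the blocks $C_9(w_{i,b-1},v_b)$, $2\leq i\leq b-2$.

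For part (1), I would invoke (\ref{eq495}): $C_9(w_{i,b-1},v_b)=(-1)^{i+1}E_{i-1,i}$, so its $(i-1)$-th row carries a single non-zero entry $(-1)^{i+1}$ sitting in column $i$, which is exactly the asserted entry at $\Sigma_i(M_9(v_b))$. To see that $\Gamma_{i-1}C_9(w_{i,b-1})$ has no other non-zero entry, I would check that its $v_2$-component vanishes along this row: for $i\geq3$ it is zero by (\ref{eq493}), while for $i=2$ equation (\ref{eq494}) places the non-zero entry of $C_9(w_{2,b-1},v_2)=(-1)^bE_{b-2}$ in row $b-2$, which for $b\geq7$ is distinct from $\Gamma_1$. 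Since each $C_9(w_{i,b-1},v_b)$ occupies column $i$ alone, and the surviving $A$-row touches only columns $1$ and $b-1$ (neither of which equals $i$ for $2\leq i\leq b-2$), the entry $(-1)^{i+1}$ is simultaneously the unique non-zero entry of the whole column $\Sigma_i(M_9(v_b))$, giving (1).

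For part (2), I would specialize the same bookkeeping to the first column of $v_b$. None of the blocks $C_9(w_{i,b-1},v_b)=(-1)^{i+1}E_{i-1,i}$ meets column $1$, since they live in columns $i\geq2$, and the $B$-rows vanish; hence the only contribution to $\Sigma_1(M_9(v_b))$ comes from $A_9(u_{b-1},v_b)$, whose first coordinate is $\omega$ by (\ref{eq4612}). Reading off $\omega=a+2$ for $b$ even and $\omega=a+1$ for $b$ odd then yields (2).

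The argument carries no genuine difficulty: everything follows from Corollary \ref{cor439} and the tabulated entries (\ref{eq491})--(\ref{eq495}). The one point demanding care is the boundary case $i=2$ in part (1), where $C_9(w_{2,b-1},v_2)$ is nonzero and must be confirmed to lie outside the row $\Gamma_1$ under inspection (this is where the hypothesis $b\geq7$ enters). Thus the main expository task is simply to organize the case analysis cleanly rather than to overcome any real obstacle.
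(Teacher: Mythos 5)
Your proposal is correct and follows essentially the same route as the paper: the paper deduces Corollary \ref{cor440} from Remarks \ref{rem435}--\ref{rem436}, Proposition \ref{pro437}, Definition \ref{def438} and Corollary \ref{cor439}, which are exactly the ingredients from which the tabulated entries (\ref{eq491})--(\ref{eq495}) of $M_9$ are derived, so reading the two claims off those equations (including your careful treatment of the $i=2$ case via (\ref{eq494}) and of the surviving row $A_9(u_{b-1},v_b)$ via (\ref{eq4612})) is the same bookkeeping in a slightly different packaging.
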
 

\subsection{Operation $\boldsymbol{D_{10}}$}
We consider matrix $M_9$ and focus on column $M_9(v_1)$, given by (\ref{eq491}), (\ref{eq481}) - (\ref{eq483}), (\ref{eq4610}), (\ref{eq492}), (\ref{eq455}) and (\ref{eq493}). 
\begin{remark}\label{rem441}
We observe that	column $M_9(v_1)$ contains a unique non-zero entry equal to  $(-1)^b2$, located at row $A_9(u_{b-1})$. 
\end{remark}

Now, we focus on row $A_9(u_{b-1})$, given by  (\ref{eq491}), (\ref{eq483}) and (\ref{eq4610}) - (\ref{eq4612}). We consider those columns of $M_9$, each containing an entry equal to $\pm2$ at row $A_9(u_{b-1})$, excluding column  $M_9(v_1)$. We add or subtract an appropriate multiple of column $M_9(v_1)$ from each of the above columns so that all the entries of row  $A_9(u_{b-1})$ that are equal to $\pm2$, excluding $(-1)^b2$ in column $M_9(v_1)$, are zeroed. Because of Remark \ref{rem441}, this is the only change caused by the above procedure on matrix $M_9$.
\begin{definition}\label{def442}
We define operation $D_{10}$ as
\begin{equation*}
D_{10}:=\{D_{10}(v_j): 2\leq j\leq b \text{ and } j\equiv b\text{mod }2\},
\end{equation*}
where
\begin{equation*}
D_{10}(v_j):\Sigma_{j-1}\left(M_9(v_j) \right)+(-1)^bM_9(v_1), \text{ where } 2\leq j\leq b \text{ and } j\equiv b\text{mod }2.
\end{equation*}
\end{definition}

Definition \ref{def442}, Remark \ref{rem441} and the comment after it, lead to
\begin{corollary}\label{cor443}
The unique change caused by applying operation $D_{10}$ to matrix $M_9$ is the nulling of $-2$ at the matrices  $A_9(u_{b-1},v_j)$, where $2\leq j\leq b$ and $j\equiv b\text{mod }2$.
\end{corollary}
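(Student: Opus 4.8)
The plan is to exploit the single-entry structure of the source column. By Remark~\ref{rem441}, the column $M_9(v_1)$ has exactly one non-zero entry, namely $(-1)^b2$ in row $A_9(u_{b-1})$. Consequently, adding any integer multiple of $M_9(v_1)$ to another column of $M_9$ alters that column only in row $A_9(u_{b-1})$, leaving every other row untouched. Thus each constituent $D_{10}(v_j)$ of the operation $D_{10}$ is a ``clean'' column operation whose sole effect is on the entries of row $A_9(u_{b-1})$, and it suffices to track what happens to that single row.

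First I would read off the $\pm2$ entries of row $A_9(u_{b-1})$ from $(\ref{eq491})$, $(\ref{eq483})$ and $(\ref{eq4610})$ -- $(\ref{eq4612})$. These are $(-1)^b2$ in $\Sigma_1(M_9(v_1))$, the entry $-2$ in $\Sigma_{j-1}(M_9(v_j))$ for each $j$ with $2\leq j\leq b-1$ and $j\equiv b\bmod 2$, and the entry $-2$ in $\Sigma_{b-1}(M_9(v_b))$; this last one corresponds to $j=b$, so that the columns carrying a $-2$ in this row are precisely those indexed by $2\leq j\leq b$ with $j\equiv b\bmod 2$. Note that the block $v_b$ also contributes the entry $\omega=a+2$ or $a+1$ in $\Sigma_1(M_9(v_b))$, but this is not a $\pm2$ entry and it lies in a column that $D_{10}$ does not touch.

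Next I would apply $D_{10}(v_j)$ for each admissible $j$, as in Definition~\ref{def442}: the column $\Sigma_{j-1}(M_9(v_j))$ is replaced by $\Sigma_{j-1}(M_9(v_j))+(-1)^bM_9(v_1)$. By the first paragraph the only entry that changes is the one in row $A_9(u_{b-1})$, which becomes $-2+(-1)^b\cdot(-1)^b2=-2+2=0$. Since the admissible indices satisfy $j\geq 2$, the source column $M_9(v_1)$ is never itself among the columns being modified, so its unique entry $(-1)^b2$ survives intact. Combining these facts yields exactly the asserted nulling of the $-2$ entries of $A_9(u_{b-1},v_j)$ for $2\leq j\leq b$ with $j\equiv b\bmod 2$, and no other change to $M_9$.

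There is no genuine conceptual obstacle here; the work is the careful bookkeeping needed to confirm two points. The first is that every targeted entry equals exactly $-2$, so that the coefficient $(-1)^b$ cancels it against the pivot $(-1)^b2$; in particular one must check that the $j=b$ case, coming from $(\ref{eq4612})$, fits the same pattern as the interior cases coming from $(\ref{eq4611})$. The second is the appeal to Remark~\ref{rem441}, which guarantees that adding multiples of $M_9(v_1)$ disturbs nothing outside row $A_9(u_{b-1})$; this is precisely what upgrades the statement ``the targeted entries vanish'' to the stronger assertion that this is the \emph{unique} change caused by $D_{10}$.
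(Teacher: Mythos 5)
Your proof is correct and takes essentially the same route as the paper: the paper's own justification is exactly the combination of Remark \ref{rem441} (so that adding multiples of $M_9(v_1)$ can disturb nothing outside row $A_9(u_{b-1})$), the identification of the $\pm2$ entries of that row from (\ref{eq491}), (\ref{eq483}) and (\ref{eq4610})--(\ref{eq4612}), and the cancellation $-2+(-1)^b\cdot(-1)^b2=0$ produced by $D_{10}(v_j)$ in Definition \ref{def442}. Your additional checks, that the $j=b$ case from (\ref{eq4612}) fits the same pattern and that the column carrying $\omega$ is never modified, are the same bookkeeping the paper performs implicitly in the comment preceding Definition \ref{def442}.
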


Now, we obtain matrix $M_{10}:=D_{10}M_9=D_{10}\dots D_1M$, given by Proposition \ref{pro41} for $n=10$. From (\ref{eq41a}) for $n=10$, (\ref{eq491}), (\ref{eq481}) - (\ref{eq483}), (\ref{eq4610}) - (\ref{eq4612}) and  Corollary \ref{cor443}, we obtain the following relations:
\begin{equation}\label{eq4101}
A_{10}(u_i,v_1)=O_{1\times1}, \text{ where } 2\leq i\leq b-2, \text{ and } A_{10}(u_{b-1}, v_1)=(-1)^b[2],
\end{equation}
\begin{equation}\label{eq4102} 
A_{10}(u_i,v_j)=O_{1\times(b-1)}, 
\end{equation}
where $2\leq i\leq b-3$ and $2\leq j\leq b$ or $i=b-2$ and $3\leq j\leq b$ or $i=b-1$ and $2\leq j\leq b-1$,
\begin{equation}\label{eq4103}
A_{10}(u_{b-2},v_2)=(-1)^b
\begin{bmatrix}
O_{1\times(b-2)}\quad1
\end{bmatrix}, \text{ and} 
\end{equation}
\begin{equation}\label{eq4104}
A_{10}(u_{b-1},v_b)=
\begin{bmatrix}
\omega\quad O_{1\times(b-2)}
\end{bmatrix},  
\end{equation}
where $\omega=a+2$, if $b$ is even, and $\omega =a+1$, if $b$ is odd.

From (\ref{eq41b}) and (\ref{eq41c}) for $n=10$, Corollary \ref{cor443} and (\ref{eq492}), we conclude that
\begin{equation}\label{eq4105}
B_{10}(w_i,v_j)=B_5(w_i,v_j), \text{ where } 2\leq i\leq b-1 \text{ and } 1\leq j\leq b,
\end{equation}
and $B_5(w_i,v_j)$ is given by (\ref{eq455}) - (\ref{eq457}), and
\begin{equation}\label{eq4106}
C_{10}(w_{ij},v_h)=C_9(w_{ij},v_h), \text{ where } 2\leq i<j \leq b-1 \text{ and }
1\leq h\leq b,
\end{equation}
and $C_9(w_{ij},v_h)$ is given by (\ref{eq493}) - (\ref{eq495}).

From (\ref{eq4101}) - (\ref{eq4106}), (\ref{eq455}) - (\ref{eq457}) and (\ref{eq493}) - (\ref{eq495}), we obtain the following 
\begin{corollary}\label{cor444}
Matrix $M_{10}$ has the following properties:
\begin{enumerate}
\item $A_{10}(u_i)=O_{1\times[1+(b-1)^2]}$, where $2\leq i\leq b-3$,
\item row $A_{10}(u_{b-2})$ contains a unique non-zero entry equal to $(-1)^b$, located at $\Sigma_{b-1}(M_{10}(v_2))$, and this is the unique non-zero entry of $\Sigma_{b-1}(M_{10}(v_2))$,
\item row $A_{10}(u_{b-1})$ contains exactly two non-zero entries, $(-1)^b2$ and $\omega$  $\mathrm{[(\ref{eq4104})]}$, located at column $M_{10}(v_1)$ and $\Sigma_1(M_{10}(v_b))$, respectively; furthermore, each of the above entries is the unique non-zero entry of the corresponding column,
\item for $2\leq i\leq b-1$ and $1\leq l\leq b-2$ with $l\neq i-1$, $\Gamma_lB_{10}(w_i)$ contains a unique non-zero entry equal to $1$, located, if  $1\leq l\leq i-2$, at $\Sigma_l(M_{10}(v_i))$, if $i\leq l\leq b-2$, at $\Sigma_{l+1}(M_{10}(v_{i+1}))$, and this is the unique non-zero entry of the corresponding column,
\item $\Gamma_{i-1}B_{10}(w_i)=
O_{1\times[1+(b-1)^2]}$, where $2\leq i\leq b-1$,
\item for $3\leq j\leq b-1$,  $\Gamma_{j-1}C_{10}(w_{2j})$ contains a unique non-zero entry equal to $(-1)^{j+1}$, located at $\Sigma_{j-1}(M_{10}(v_2))$, and this is the unique non-zero entry of $\Sigma_{j-1}(M_{10}(v_2))$, and
\item for $2\leq i\leq b-2$,  $\Gamma_{i-1}C_{10}(w_{ib-1})$ contains a unique non-zero entry equal to  $(-1)^{i+1}$, located at  $\Sigma_i(M_{10}(v_b))$, and this is the unique non-zero entry of  $\Sigma_i(M_{10}(v_b))$.
\end{enumerate} 
\end{corollary}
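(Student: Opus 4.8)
The plan is to read the seven assertions off the closed forms of the blocks of $M_{10}$, which are assembled from three families: the rows $A_{10}(u_i,v_j)$ in (\ref{eq4101}) -- (\ref{eq4104}); the blocks $B_{10}(w_i,v_j)=B_5(w_i,v_j)$ in (\ref{eq4105}), (\ref{eq455}) -- (\ref{eq457}); and the blocks $C_{10}(w_{ij},v_h)=C_9(w_{ij},v_h)$ in (\ref{eq4106}), (\ref{eq493}) -- (\ref{eq495}). The organizing observation, which makes every column scan short, is that by this stage the support has collapsed: by (\ref{eq493}) -- (\ref{eq495}) the only nonzero $C_{10}$-blocks sit in the $v_2$-column (namely $C_9(w_{2j},v_2)=(-1)^{j+1}E_{j-1}$) and in the $v_b$-column (namely $C_9(w_{ib-1},v_b)=(-1)^{i+1}E_{i-1,i}$), while by (\ref{eq4101}) -- (\ref{eq4104}) the only nonzero $A_{10}$-entries are $(-1)^b2$ in the single column $M_{10}(v_1)$, the entry $(-1)^b$ in the last column of $M_{10}(v_2)$, and $\omega$ in the first column of $M_{10}(v_b)$. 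Consequently every column of the blocks $v_3,\dots,v_{b-1}$ is governed purely by the $B$-blocks.

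From this, properties (1) and (5) are immediate: (1) is (\ref{eq4101}) together with (\ref{eq4102}) for $2\leq i\leq b-3$, and (5) records that the $(i-1)$-th row of each of (\ref{eq455}) -- (\ref{eq457}) vanishes. Property (4) is Proposition \ref{pro422} transported along $B_{10}=B_5$; the only thing to add is the uniqueness within the column, and here the support observation does the work: the columns in question belong to $v_3,\dots,v_{b-1}$, so they receive no $A$- or $C$-contribution, and among the $B$-blocks only $B_5(w_i,v_i)$ and its neighbour $B_5(w_{i-1},v_i)$ (respectively $B_5(w_i,v_{i+1})$ and $B_5(w_{i+1},v_{i+1})$) are nonzero in that column, the neighbour occupying the complementary set of columns by (\ref{eq456}) -- (\ref{eq457}). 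Thus the single $1$ coming from $B_{10}(w_i)$ is alone in its column.

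The remaining assertions (2), (3), (6) and (7) are the unique-in-column statements, handled by fixing the column and intersecting it with the three families. For (2), the column is the last one of $v_2$: $B_5(\,\cdot\,,v_2)=O$ by (\ref{eq455}), and the blocks $C_9(w_{2j},v_2)=(-1)^{j+1}E_{j-1}$ occupy columns $j-1\in\{2,\dots,b-2\}$ as $j$ runs over $3,\dots,b-1$, never the last column, so $(-1)^b$ from (\ref{eq4103}) stands alone; (6) is the companion statement that columns $2,\dots,b-2$ of $v_2$ each carry exactly the single entry $(-1)^{j+1}$. For (3) I would scan $M_{10}(v_1)$, which is annihilated off the $u_{b-1}$-row by (\ref{eq455}) and (\ref{eq493}), and the first column of $v_b$, where $B_5(\,\cdot\,,v_b)=O$ and $C_9(w_{ib-1},v_b)=(-1)^{i+1}E_{i-1,i}$ occupies columns $i\in\{2,\dots,b-2\}$, never column $1$, leaving $\omega$ alone; (7) is again the companion, columns $2,\dots,b-2$ of $v_b$ each carrying exactly $(-1)^{i+1}$. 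The one genuinely delicate point is the index bookkeeping in these scans: I would verify explicitly that the supports $\{j-1\}$ of the $E_{j-1}$ summands in (\ref{eq494}) and $\{i\}$ of the $E_{i-1,i}$ summands in (\ref{eq495}) are exactly disjoint from the three columns reserved for the isolated $A$-entries (the last column of $v_2$, the first column of $v_b$, and $M_{10}(v_1)$), since an off-by-one there would collapse one of the uniqueness claims; everything else is direct substitution into the cited formulas.
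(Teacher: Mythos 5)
Your proposal is correct and matches the paper's own (implicit) proof: Corollary \ref{cor444} is stated there as following directly from (\ref{eq4101})--(\ref{eq4106}), (\ref{eq455})--(\ref{eq457}) and (\ref{eq493})--(\ref{eq495}), which is exactly the reading-off-the-blocks argument you give. Your "support collapse" observation and the explicit column scans (including the disjointness of the supports $\{j-1\}$ of $E_{j-1}$ in (\ref{eq494}) and $\{i\}$ of $E_{i-1,i}$ in (\ref{eq495}) from the three reserved columns) simply spell out the verification the paper leaves to the reader, and all the index bookkeeping checks out.
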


From Corollary \ref{cor444} (2), (4), (6) and  (7), we obtain the following
\begin{corollary}\label{cor445}
In matrix $M_{10}$, there exist $b(b-3)+1$ entries, each of which is equal to $\pm 1$ and constitutes the unique non-zero entry of the corresponding row and column.
\end{corollary}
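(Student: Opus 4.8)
The plan is to prove this purely as a counting statement, extracting the required entries directly from Corollary \ref{cor444}. Parts (2), (4), (6) and (7) of that corollary each describe entries of $M_{10}$ that equal $\pm 1$ and are simultaneously the unique non-zero entry of their row and of their column; so the entire task is to assemble these four families, verify that they occupy pairwise distinct positions, and check that the total number is $b(b-3)+1$.

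First I would tally the four families. Part (2) supplies a single entry (the $(-1)^b$ in row $A_{10}(u_{b-2})$ at $\Sigma_{b-1}(M_{10}(v_2))$). Part (4) supplies, for each $i$ with $2\le i\le b-1$, one entry for every $l$ with $1\le l\le b-2$ and $l\neq i-1$; since $i-1$ always lies in $\{1,\dots,b-2\}$, this is $b-3$ choices of $l$ per $i$, hence $(b-2)(b-3)$ entries. Parts (6) and (7) each supply $b-3$ entries, indexed respectively by $j\in\{3,\dots,b-1\}$ and $i\in\{2,\dots,b-2\}$. Summing,
\[
1+(b-2)(b-3)+2(b-3)=1+(b-3)\bigl[(b-2)+2\bigr]=b(b-3)+1,
\]
as required.

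It then remains to confirm that these positions are pairwise distinct, so that nothing is double-counted. Since Corollary \ref{cor444} guarantees that each listed entry is the unique non-zero entry of its column, any two of them sharing a column would have to coincide; the same holds for rows. Thus distinctness reduces to checking that the chosen rows are pairwise distinct. The entries of parts (2), (4) and (6)--(7) lie in the $A$-, $B$- and $C$-blocks respectively, which are disjoint families of rows, and within part (4) the pairs $(i,l)$ index distinct rows of the distinct blocks $B_{10}(w_i)$. The one genuine overlap to address is that parts (6) and (7) both use $C$-blocks and share the block $C_{10}(w_{2b-1})$ (part (6) at $j=b-1$, part (7) at $i=2$); there they select the rows $\Gamma_{b-2}$ and $\Gamma_1$, which are distinct because $b\ge 7$ forces $b-2\neq 1$. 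Hence all $b(b-3)+1$ entries occupy distinct positions, each is $\pm 1$, and each is the unique non-zero entry of its row and column, proving the corollary. The sole (and minor) obstacle is precisely this last overlap check in the $C$-blocks; everything else is a direct reading off of Corollary \ref{cor444}.
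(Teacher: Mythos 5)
Your proposal is correct and follows the paper's own route: the paper derives Corollary \ref{cor445} precisely by combining parts (2), (4), (6) and (7) of Corollary \ref{cor444}, exactly as you do. Your explicit tally $1+(b-2)(b-3)+2(b-3)=b(b-3)+1$ and the check that the rows from parts (6) and (7) inside the shared block $C_{10}(w_{2b-1})$ are distinct (since $b\geq 7$) simply make explicit the bookkeeping the paper leaves implicit.
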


\subsection{Operation $\boldsymbol{D_{11}}$}
We consider matrix $M_{10}$ and focus on Corollary \ref{cor444}. We apply the appropriate interchanges of rows and columns, and the necessary changes of sign to rows and columns so that $M_{10}$  is transformed into the equivalent matrix
\begin{equation*}
M_{11}=
\begin{bmatrix}
I_r& &&\\
& 2&\omega&\\
&&&O
\end{bmatrix}\in\mathbb{Z}^{s\times t},
\end{equation*}
where $r=b(b-3)+1$, $\omega=a+2$, if $b$ is even, $\omega=a+1$, if $b$ is odd,  $s=(b-2)\left[1+\binom{b-1}{2}\right]$ and $t=1+(b-1)^2$.

In the following, we need the well-known
\begin{proposition}\label{pro446}
Let $A\in R^{m\times n}$ $(m,n\in\mathbb{N})$
be a matrix, where $R$ is a principal ideal domain, and $\mathrm{diag}(d_1,\dots,d_u)$ is a Smith normal form of $A$. Let $\delta_i$ be a g.c.d of the determinants of the  $i\times i$ submatrices of $A$. Then, there are invertible elements  $c_1,\dots,c_u\in R$ such that 
$d_1=c_1\delta_1$,  $d_2\delta_1=c_2\delta_2$, \dots, $d_u\delta_{u-1}=c_u\delta_u$.
\end{proposition}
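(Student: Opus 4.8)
The plan is to reduce everything to the behaviour of the \emph{determinantal divisors} $\delta_i$ under multiplication by invertible matrices over $R$. By the definition of the Smith normal form there exist invertible matrices $U\in R^{m\times m}$ and $V\in R^{n\times n}$ with $A=U\,\mathrm{diag}(d_1,\dots,d_u)\,V$, where moreover $d_1\mid d_2\mid\cdots\mid d_u$ (this divisibility chain is part of the definition, so I may assume it). First I would isolate the key lemma that $\delta_i$ is invariant, up to a unit of $R$, under left or right multiplication by an invertible matrix; that is, $\delta_i(UA)$ and $\delta_i(A)$ are associates, and likewise $\delta_i(AV)$ and $\delta_i(A)$.

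To prove this lemma I would invoke the Cauchy--Binet formula: every $i\times i$ minor of a product $XY$ is an $R$-linear combination of the $i\times i$ minors of $Y$ (with coefficients that are $i\times i$ minors of $X$), and symmetrically of the minors of $X$. Hence $\delta_i(A)$ divides every $i\times i$ minor of $UA$, so $\delta_i(A)\mid\delta_i(UA)$. Applying the same divisibility to $U^{-1}(UA)=A$ gives $\delta_i(UA)\mid\delta_i(A)$; since $R$ is an integral domain, the two are associates. The argument for right multiplication by $V$ is identical.

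Next I would compute $\delta_i$ for the diagonal matrix $D:=\mathrm{diag}(d_1,\dots,d_u)$. Any $i\times i$ submatrix of $D$ whose row and column index sets do not coincide has a zero row or column, hence vanishing determinant; the surviving minors are exactly the products $d_{j_1}\cdots d_{j_i}$ over $i$-element index sets $\{j_1<\cdots<j_i\}$. Because $j_k\ge k$ and $d_1\mid\cdots\mid d_u$, we have $d_1\cdots d_i\mid d_{j_1}\cdots d_{j_i}$ for every such set, so $d_1\cdots d_i$ divides all $i\times i$ minors and is itself one of them; therefore $\delta_i(D)=d_1d_2\cdots d_i$ up to a unit. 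Combining this with the invariance lemma applied to $A=UDV$ yields $\delta_i(A)=c_i'\,d_1d_2\cdots d_i$ for some unit $c_i'\in R$ and each $1\le i\le u$.

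Finally I would read off the asserted relations, setting $\delta_0:=1$. From $\delta_1=c_1'd_1$ we get $d_1=c_1\delta_1$ with $c_1:=(c_1')^{-1}$, and from $\delta_i=c_i'\,d_1\cdots d_i=c_i'(c_{i-1}')^{-1}d_i\,\delta_{i-1}$ we obtain $d_i\,\delta_{i-1}=c_i\,\delta_i$ with the unit $c_i:=c_{i-1}'(c_i')^{-1}$. The main obstacle is the invariance lemma, i.e.\ the careful application of Cauchy--Binet showing that unimodular row and column operations preserve each $\delta_i$ up to units; once this is secured, the determinantal-divisor computation for $D$ and the concluding bookkeeping are routine.
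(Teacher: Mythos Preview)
Your argument is correct and is the standard textbook proof of this classical fact. The paper itself does not prove Proposition~\ref{pro446} at all: it is introduced with the phrase ``we need the well-known'' and simply stated without proof, so there is no approach to compare against. Your route via Cauchy--Binet to establish unit-invariance of the determinantal divisors $\delta_i$, followed by the direct computation $\delta_i(D)=d_1\cdots d_i$ for the diagonal matrix, is exactly how one usually justifies this result.
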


Using Proposition \ref{pro446}, we compute that the invariant factors of the matrix $M$, which coincide with the invariant factors of the equivalent matrix $M_{11}$, are
\begin{equation*}
d_i =\pm1, \text{ for } 1\leq i\leq r,\;
d_{r+1}=\begin{cases}
\pm2,\text { if } a\equiv b\bmod2\\ 
\pm1,\text{ if } a\not\equiv b\bmod2
\end{cases}
\text{and } d_i = 0,\text{ for } r+2\leq i\leq t.
\end{equation*}
From the above, (\ref{eq333}) and (\ref{eq334}), it follows that
\begin{equation*} \mathrm{Ext}_A^2(K_{\lambda}F,K_{\mu}F)\cong\mathbb{Z}_{d_1}\oplus\ldots\oplus\mathbb{Z}_{d_r}\oplus\mathbb{Z}_{d_{r+1}}\cong\mathbb{Z}_{d_{r+1}}=\begin{cases}
\mathbb{Z}_2, &\text{if } a\equiv b\bmod2\\ 0, &\text{if } a\not\equiv b\bmod2,
\end{cases}
\end{equation*}
hence we have proven Theorem \ref{thm33}(1) for $b\geq7$.

\section{The invariant factors of matrix $M$ for $3\leq b\leq 6$}
In this section, we determine $\mathrm{Ext}_A^2(K_{\lambda}F,K_{\mu}F)$, where $\lambda=(a,1^b)$ and $\mu=(a+1,b-1)$, for $b=3,\dots,6$ and $a+1>b-1$. We recall that the matrix $M=M(a,b)$ of $\mathrm{Ext}_A^2(K_{\lambda}F,K_{\mu}F)$ is given by (\ref{eq335}) - (\ref{eq3316}). Similarly to the case $b\geq7$ and $a+1>b-1$, we first compute the invariant factors of $M$ by applying row and column operations and then determine $\mathrm{Ext}_A^2(K_{\lambda}F,K_{\mu}F)$  using (\ref{eq333}) and (\ref{eq334}). 

In the following, we denote by $F_n=F_n(a,b)$ the $n-th$ operation applied to matrix $M$ and $M_n=M_n(a,b):=F_nM_{n-1}$, where $M_0:=M$. Hence, from (\ref{eq335}), it follows that $M_n$ is given by Proposition \ref{pro41} by setting $F_n$ in place of $D_n$.

\subsection{\textbf{Let} $\boldsymbol{b=3}$}
From (\ref{eq335}) - (\ref{eq3316}), it follows that
\begin{equation*}
M=M(a,b=3)=
\begin{bmatrix}
-2&0&0&a+1&-2\\
\;\;\;0&-2&-1&-1&-2
\end{bmatrix}\in\mathbb{Z}^{2\times5}.
\end{equation*}
It is easy to compute that the invariant factors of $M$ are $d_1=\pm1$ and  $d_2=\pm \mathrm{g.c.d.}\{2,a+1\}$. Therefore, (\ref{eq333}) and (\ref{eq334}) yield Theorem \ref{thm33}(1) for $b=3$.

\subsection{\textbf{Let} $\boldsymbol{b=4}$}
We consider matrix $M=M(a,b=4)$, given by  (\ref{eq335}) - (\ref{eq3316}) for $b=4$.

\subsubsection*{\textbf{Operation} $\boldsymbol{F_1}$}
Definition \ref{def43} is valid for $b=4$, thus we set $F_1:=\{D_1:b=4\}$.
We note that Definitions \ref{def49}, \ref{def413} and \ref{def416} are not valid for $b=4$.

\subsubsection*{\textbf{Operation} $\boldsymbol{F_2}$}
Definitions \ref{def420} and \ref{def423} are valid for $b=4$, therefore we set
\begin{equation*}
F_2:=\{D_5,D_6:b=4\}, \text{that is } F_2=\{F_2(w_i),F_2(u_3): i=2,3\},
\end{equation*} 
where
\begin{align*}
&F_2(w_2):\Gamma_1B_1(w_2)+\Gamma_2B_1(w_3)+\Gamma_1C_1(w_{23})+\Gamma_2C_1(w_{23}),\\
&F_2(w_3):\Gamma_2B_1(w_3)+A_1(u_2)+A_1(u_3) \text{ and } F_2(u_3):A_1(u_3)-\Gamma_1C_1(w_{23}).
\end{align*}

\subsubsection*{\textbf{Operation} $\boldsymbol{F_3}$}
We can apply Remark \ref{rem428} for $\Sigma_2\left(M_2(v_j)\right)$ to matrix $M_2$, where $j=2$ or  $j=4$, thus we set
\begin{equation*}
F_3:=\{F_3(v_j,t):j=2 \text{ or } j=4\text{ and }t=1 \text{ or } t=3\},
\end{equation*}
where
\begin{align*}
&F_3(v_2,1):\Sigma_1\left(M_2(v_2)\right)+2\Sigma_2\left(M_2(v_4)\right), F_3(v_2,3):\Sigma_3\left(M_2(v_2)\right)-\Sigma_2\left(M_2(v_2)\right),\\
&F_3(v_4,1):\Sigma_1\left(M_2(v_4)\right)-\Sigma_2\left(M_2(v_4)\right) \text{ and } F_3(v_4,3):\Sigma_3\left(M_2(v_4)\right)+2\Sigma_2\left(M_2(v_2)\right).
\end{align*}
We note that $F_3$ corresponds to operations $D_7$ and $D_9$  for $b=4$. Thus, $F_3$ is also given by Definitions \ref{def429} and \ref{def438} for $b=4$, applying the necessary modifications.

\subsubsection*{\textbf{Operation} $\boldsymbol{F_4}$}
To matrix $M_3$, we apply Remark \ref{rem428} for $\Sigma_3\left(M_3(v_2)\right)$ and a remark analogous to \ref{rem428} for $\Sigma_1\left(M_3(v_2)\right)$, with $-2$ in place of $(-1)^i$, concerning the even non-zero entries of  $\Gamma_*$. Thus, we set
\begin{equation*}
F_4:=\{F_4(v_1), F_4(v_3,t), F_4(v_4,1), F_4(v_4,3):t=1,2,3\},
\end{equation*}
where
\begin{align*}
&F_4(v_1):M_3(v_1)+2\Sigma_3\left(M_3(v_2)\right)+\Sigma_1\left(M_3(v_2)\right),\\ &F_4(v_3,t):\Sigma_t\left(M_3(v_3)\right)+\theta_t\Sigma_3\left(M_3(v_2)\right), 1\leq t\leq 3, \;\theta_1=-(a+1), \;\theta_2=2,\;\theta_3=1,\\
&F_4(v_4,1):\Sigma_1\left(M_3(v_4)\right)+(a+1)\Sigma_3\left(M_3(v_2)\right) \text{ and }\\
&F_4(v_4,3):\Sigma_3\left(M_3(v_4)\right)-\Sigma_1\left(M_3(v_2)\right).
\end{align*}

\subsubsection*{\textbf{Operation} $\boldsymbol{F_5}$}
We set $F_5:=\{F_5(w_2):\Gamma_2B_4(w_2)+3\Gamma_1B_4(w_2)+\Gamma_1B_4(w_3)\}$.

\subsubsection*{\textbf{Operation} $\boldsymbol{F_6}$}
We set $F_6:=\{F_6(v_3):\Sigma_1\left(M_5(v_3)\right)+\Sigma_3\left(M_5(v_3)\right)\}$.

\subsubsection*{\textbf{Operation} $\boldsymbol{F_7}$}
We consider matrix $M_6$ and apply the appropriate interchanges of rows and columns, and the necessary  changes of sign to rows and columns so that $M_6$  is transformed into the equivalent matrix
\begin{equation*}
M_7=
\begin{bmatrix}
I_4 & & & &\\
& 2& a+2& 0&\\
& 0& 0& 3&\\
& 0& 0& a+1&\\
&  &  &  &  O_{1\times3}
\end{bmatrix}.
\end{equation*}

According to Proposition \ref{pro446}, the invariant factors of matrix $M$ are $d_i=\pm1$, for $1\leq i\leq 5$,
$d_6=\pm\mathrm{g.c.d.}\{6, 2(a+1), 3(a+2), (a+1)(a+2)\}$ and $d_7=d_8=0$. Therefore, (\ref{eq333}) and (\ref{eq334}) yield Theorem \ref{thm33}(2).

\subsection{\textbf{Let} $\boldsymbol{b=5}$}
We consider matrix $M=M(a,b=5)$, given by (\ref{eq335}) - (\ref{eq3316}) for $b=5$.

\subsubsection*{\textbf{Operation} $\boldsymbol{F_n, n=1,2,3}$}
Definitions \ref{def43}, \ref{def49} and \ref{def413} are valid for $b=5$, thus for $n=1,2,3$, we set $F_n:=\{D_n:b=5\}$.

\subsubsection*{\textbf{Operation} $\boldsymbol{F_4}$}
For $b=5$, Definition \ref{def416} is valid only for $i=3$. 
Thus, we set $F_4:=\{D_4:b=5,\; i=3\}$. 
We notice that matrix $M_4$ differs from the corresponding matrix of the case $b\geq7$ only at $\Sigma_r\left(M_4(v_3)\right)$ and $\Sigma_t\left(M_4(v_4)\right)$, where $r=3,4$ and $t=1,2$, because $F_4$, unlike $D_4$, does not affect the corresponding columns of $M_3$.

\subsubsection*{\textbf{Operation} $\boldsymbol{F_n, n=5,...,10}$}
Definitions \ref{def420}, \ref{def423}, \ref{def429}, \ref{def432}, \ref{def438} and \ref{def442} are valid for $b=5$, thus for $n=5,\ldots,10$, we set $F_n:=\{D_n:b=5\}$. 
We notice that for $n=5,\ldots,10$, matrix $M_n$ differs from the corresponding matrix of the case $b\geq7$ only at $\Sigma_r\left(M_n(v_3)\right)$ and $\Sigma_t\left(M_n(v_4)\right)$, where $r=3,4$ and $t=1,2$.\\

In the following, we define and apply further operations to simplify  $\Sigma_r\left(M_{10}(v_3)\right)$ and $\Sigma_t\left(M_{10}(v_4)\right)$, where $r=3,4$ and  $t=1,2$.

\subsubsection*{\textbf{Operation} $\boldsymbol{F_{11}}$}
We consider matrix $M_{10}$ and focus on the rows containing non-zero entries at the columns in question.

\begin{definition}\label{def51}
We define operation $F_{11}$  as 
\begin{equation*}
F_{11}:=\{F_{11}(u_i),F_{11}(w_2,l),F_{11}(w_4,t):\ 2\leq i,t\leq 3, \ 1\leq l\leq 3\},
\end{equation*}
where
\begin{align*}
&F_{11}(u_2): A_{10}(u_2)-\Gamma_3C_{10}(w_{34}), \ F_{11}(u_3):A_{10}(u_3)+\Gamma_3B_{10}(w_4),\\
&F_{11}(w_2,1):\Gamma_1B_{10}(w_2)-\Gamma_2B_{10}(w_3),\\ 
&F_{11}(w_2,2):\Gamma_2B_{10}(w_2)+\Gamma_3B_{10}(w_2)-3\Gamma_3C_{10}(w_{34}),\\
&F_{11}(w_2,3):\Gamma_3B_{10}(w_2)+\Gamma_1B_{10}(w_4)-3\left(\Gamma_1C_{10}(w_{23})+\Gamma_2B_{10}(w_3)+\Gamma_3C_{10}(w_{34})\right),\\
&F_{11}(w_4,2):\Gamma_2B_{10}(w_4)+\Gamma_1B_{10}(w_4)-3\Gamma_1C_{10}(w_{23})\text{ and }\\
&F_{11}(w_4,3):\Gamma_3B_{10}(w_4)+\Gamma_1C_{10}(w_{23}). 
\end{align*}
\end{definition}

\subsubsection*{\textbf{Operation} $\boldsymbol{F_{12}}$}
We consider matrix $M_{11}$ and focus on $\Sigma_r\left(M_{11}(v_3)\right)$ and $\Sigma_t\left(M_{11}(v_4)\right)$, where $r=3,4$ and $t=1,2$.

\begin{definition}\label{def52}	
We define operation $F_{12}$ as
\begin{equation*}
F_{12}:=\{F_{12}(v_3,3),F_{12}(v_4,1),F_{12}(v_4,2)\},
\end{equation*}
where
\begin{align*}
&F_{12}(v_3,3):\Sigma_3(M_{11}(v_3))-\Sigma_4(M_{11}(v_3)),\\
&F_{12}(v_4,1):\Sigma_1(M_{11}(v_4))-\Sigma_2 (M_{11}(v_4))+\Sigma_3(M_{11}(v_3))-\Sigma_4 (M_{11}(v_3)) \text{ and }\\
&F_{12}(v_{4},2):\Sigma_2(M_{11}(v_4)) - \Sigma_3(M_{11}(v_3))+\Sigma_4(M_{11}(v_3)).
\end{align*}
\end{definition}

\subsubsection*{\textbf{Operation} $\boldsymbol{F_{13}}$}
We consider matrix $M_{12}$ and apply the appropriate interchanges of rows and columns, and the necessary changes of sign to rows and columns so that $M_{12}$ is transformed into the equivalent matrix
\begin{equation*}
M_{13}=
\begin{bmatrix}
I_{10}& & & &\\
&2&a+1&0&\\
&0&0&3&\\
&0&0&a+2&\\
& & & &O_{8\times4}
\end{bmatrix}.
\end{equation*}

According to Proposition \ref{pro446}, the invariant factors of matrix $M$ are $d_i=\pm1$, for $1\leq i\leq 11$, $d_{12}=\pm\mathrm{g.c.d.}\{6, 2(a+2), 3(a+1), (a+1)(a+2)\}$ and $d_i=0$, for $13\leq i\leq17$. The above, (\ref{eq333}) and (\ref{eq334}), yield Theorem \ref{thm33}(3).

\subsection{\textbf{Let} $\boldsymbol{b=6}$}
We proceed exactly as in case $b\geq7$, setting $b=6$ and modifying only the $4-th$ operation as follows: We consider matrix $M_3=M_3(a,b=6)$, given by Proposition \ref{pro41}, by setting $n=3$, $b=6$ and $F_3:=\{D_3:b=6\}$ in place of $D_3$. We notice that $M_3$ satisfies Corollary \ref{cor415} for $b=6$ and $(i,l)\notin\{(2,3),(5,2)\}$. Thus, we set 
\begin{equation*}
F_{4.1}:=\{D_4:b=6 \text{ and } (i,l)\notin\{(2,3),(5,2)\}\},
\end{equation*}
where  $D_4$ is given by Definition \ref{def416}. Now, we obtain matrix $M_{4.1}:=F_{4.1}M_3$.  Obviously, we have the following
\begin{corollary}\label{cor53}
The unique change caused to matrix $M_3$ by applying operation $F_{4.1}$ is the one described by Remark $\mathrm{\ref{rem417}}$ by setting $b=6$ and $(i,l)\notin\{(2,3),(5,2)\}$.
\end{corollary}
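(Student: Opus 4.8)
The plan is to obtain Corollary \ref{cor53} as a direct specialization of the treatment of operation $D_4$ given in Section 4 (Definition \ref{def416} and Remark \ref{rem417}), the only new point being the restricted range of index pairs. By definition $F_{4.1}$ is the operation $D_4$ of Definition \ref{def416} applied to exactly those pairs $(i,l)$ with $2\leq i\leq 5$, $1\leq l\leq 4$, $l\neq i-1$ and $(i,l)\notin\{(2,3),(5,2)\}$. It therefore suffices to show that each admissible $D_4(i,l)$ is well defined with the localized effect of Remark \ref{rem417}, and that these single-pair effects do not interfere.

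First I would use the observation recorded just before the corollary, that for $b=6$ the matrix $M_3$ satisfies Corollary \ref{cor415} precisely on the admissible set. For each admissible $(i,l)$ this furnishes the two ingredients Definition \ref{def416} needs: the unique non-zero entry $(-1)^{i+1}3$ of $\Gamma_l B_3(w_i)$, and a pivot row $\Gamma(i,l)$ in the $C_3$-blocks whose unique non-zero entry $\pm 2$ lies in the same column. Since $2$ and $3$ are coprime, Step 1 turns that entry into $1$; and since $\Gamma(i,l)$ vanishes off that column, Step 1 changes nothing outside the target column $\Sigma_l(M_3(v_i))$ (for $1\leq l\leq i-2$) or $\Sigma_{l+1}(M_3(v_{i+1}))$ (for $i\leq l\leq 4$). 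After Step 1 the row $\Gamma_l B_3(w_i)$ carries a single non-zero entry, so Step 2, which clears the target column by adding multiples of this row, again affects only that column. This reproduces Remark \ref{rem417} for the single pair $(i,l)$.

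Next I would assemble the pieces. Reading the effects column-wise as in Remark \ref{rem418}, the map sending an admissible pair to its target column is injective, so distinct admissible operations act on pairwise disjoint columns; moreover each pivot $\Gamma(i,l)$, having its only non-zero entry in its own target column, is never among the rows altered while a different column is cleared, so all pivots survive intact and the order of execution is immaterial. Hence $F_{4.1}$ composes to the union of the single-pair changes, which is precisely Remark \ref{rem417} for $b=6$ restricted to $(i,l)\notin\{(2,3),(5,2)\}$.

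The one point meriting explicit care---and the expected, mild, obstacle---is that the two columns belonging to the excluded pairs remain untouched. Here I would note that the column of $(2,3)$ is $\Sigma_4(M_3(v_3))$ while that of $(5,2)$ is $\Sigma_2(M_3(v_5))$, and check via the injectivity above that neither is the target of any admissible pair; the block descriptions (\ref{eq433}) and (\ref{eq434}) - (\ref{eq4312}) then confirm that the pivot and cleared rows used for the admissible pairs carry zeros in these two columns, so no admissible step can disturb them. This completes the reduction to Remark \ref{rem417}.
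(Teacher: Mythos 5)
Your proposal is correct and follows essentially the same route as the paper, which states the corollary as an immediate consequence ("Obviously") of defining $F_{4.1}$ as the restriction of $D_4$ to the admissible pairs together with the observation, made just before the corollary, that $M_3$ satisfies Corollary \ref{cor415} for $b=6$ exactly on that set. Your verification that each single-pair operation is column-localized, that distinct pairs target distinct columns, and that the two excluded columns are therefore untouched is simply the detailed justification of what the paper leaves implicit.
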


In matrix $M_{4.1}$, we notice that for $(i,l) \in\{(2,3),(5,2)\}$, $\Gamma_lB_{4.1}(w_i)= \Gamma_lB_3(w_i)$ and $\Gamma_lB_3(w_i)$ satisfies Corollary \ref{cor415} $(1)$ and $(3)$. In addition, each of $\Gamma(2,3):=\Gamma_3C_{4.1}(w_{34})$ and $\Gamma(5,2):=\Gamma_2C_{4.1}(w_{34})$ contains a unique non-zero entry equal to $\pm1$, located at $\Sigma_4 \left(M_{4.1}(v_3)\right)=\Sigma_4\left(M_3(v_3)\right)$ and $\Sigma_2 \left(M_{4.1}(v_5)\right)=\Sigma_2\left(M_3 (v_5)\right)$, respectively. Thus, we set 
\begin{equation*}
F_{4.2}:=\{D_4: b=6 \text{ and } (i,l)\in\{(2,3),(5,2)\}\}.
\end{equation*}

Now, we obtain matrix $M_{4.2}:=F_{4.2}M_{4.1}=
F_{4.2}F_{4.1}M_3$. Obviously, we have the following
\begin{corollary}\label{cor54}
The unique change caused to matrix $M_{4.1}$ by applying operation  $F_{4.2}$ is the one described by Remark $\mathrm{\ref{rem417}}$ by setting $b=6$ and $(i,l)\in\{(2,3),(5,2)\}$.
\end{corollary}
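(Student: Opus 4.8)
The plan is to read Corollary \ref{cor54} as a direct specialization of the general $D_4$ mechanism encoded in Definition \ref{def416} and recorded in Remark \ref{rem417}, once the two single-entry rows that $D_4$ requires have been located in $M_{4.1}$. By construction, $F_{4.2}$ is precisely the operation $D_4$ performed, for $b=6$, on the two pairs $(i,l)\in\{(2,3),(5,2)\}$ that were deliberately withheld from $F_{4.1}$. So it suffices to check that running $D_4$ on each of these two pairs produces exactly the change described in Remark \ref{rem417}, and that the two instances do not disturb one another.

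First I would collect the hypotheses, all of which have already been assembled in the paragraph preceding the statement. For each special pair, the row $\Gamma_lB_{4.1}(w_i)$ was left untouched by $F_{4.1}$, hence equals $\Gamma_lB_3(w_i)$ and still satisfies parts $(1)$ and $(3)$ of Corollary \ref{cor415}: it carries the unique non-zero entry $(-1)^{i+1}3$, located at $\Sigma_4(M_{4.1}(v_3))$ when $(i,l)=(2,3)$ and at $\Sigma_2(M_{4.1}(v_5))$ when $(i,l)=(5,2)$. Moreover the auxiliary rows $\Gamma(2,3)=\Gamma_3C_{4.1}(w_{34})$ and $\Gamma(5,2)=\Gamma_2C_{4.1}(w_{34})$ each contain a unique non-zero entry equal to $\pm1$, sitting in precisely the same column as the corresponding $(-1)^{i+1}3$. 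Thus the two ingredients Definition \ref{def416} needs are in place; the only deviation from the generic situation of Corollary \ref{cor415} is that the auxiliary entry is $\pm1$ rather than $\pm2$, which only makes the manufacture of a unit pivot easier.

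Next I would run the two steps of Definition \ref{def416} and track that nothing outside the designated column moves. In Step $1$, adding the appropriate integer multiple of the single-entry row $\Gamma(i,l)$ to $\Gamma_lB_{4.1}(w_i)$ alters only that one column, since $\Gamma(i,l)$ vanishes in every other column, and because its non-zero entry is the unit $\pm1$ it turns $(-1)^{i+1}3$ into $1$. In Step $2$, the resulting pivot row is itself a single-entry row, so using it to clear the remaining entries of the designated column affects that column alone. Combining the two steps, the sole net change to $M_{4.1}$ is that $(-1)^{i+1}3$ becomes $1$ and every other entry of that column is zeroed, which is exactly the content of Remark \ref{rem417} specialized to $b=6$ and $(i,l)\in\{(2,3),(5,2)\}$.

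The one point that genuinely requires verification, and which I expect to be the only real obstacle, is that the two instances of $D_4$ making up $F_{4.2}$ do not interfere. Here the single-entry bookkeeping pays off: the pivots for $(2,3)$ and $(5,2)$ live in different columns, namely $\Sigma_4(M_{4.1}(v_3))$ and $\Sigma_2(M_{4.1}(v_5))$, and every row invoked, both the $B$-block rows $\Gamma_lB_{4.1}(w_i)$ and the auxiliary rows in $C_{4.1}(w_{34})$, has a single non-zero entry. Consequently the clearing performed for one pair leaves untouched the column, the pivot, and the auxiliary row belonging to the other pair, so the hypotheses recorded above remain valid throughout. Granting this independence, the two instances may be executed in either order and the corollary follows verbatim from Remark \ref{rem417}; what remains is only the routine confirmation of the single-entry claims already set up before the statement.
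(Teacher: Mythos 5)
Your proposal is correct and takes essentially the same route as the paper: the paper states Corollary \ref{cor54} as an immediate consequence of the preceding paragraph (which establishes that $\Gamma_lB_{4.1}(w_i)=\Gamma_lB_3(w_i)$ satisfies Corollary \ref{cor415}(1),(3) and that the auxiliary rows $\Gamma(2,3)$, $\Gamma(5,2)$ are single-entry rows with entry $\pm1$ in the matching columns), exactly the hypotheses you verify before running the two steps of Definition \ref{def416}. Your explicit checks that each step touches only the designated column and that the two instances $(2,3)$ and $(5,2)$ do not interfere are precisely the details the paper leaves as obvious.
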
 
Now, we set $F_4:=F_{4.2}F_{4.1}$ and obtain matrix $M_4:=F_4M_3$. From Corollaries \ref{cor53} and \ref{cor54}, it follows that the unique change caused to matrix $M_3$ by applying operation $F_4$ is the one described by Remark \ref{rem417} by setting $b=6$.

We note that matrices $A_n(u_i,v_j)$, $B_n (w_i,v_j)$ and $C_n(w_{ij},v_h)$, where $n=1,...,10$, and $M_{11}$ are obtained by setting $b=6$ in the relations that give the corresponding matrices for the case $b\geq7$. Finally, the result for $b=6$ is given by Theorem \ref{thm33}(1).

\end{document}